\begin{document}
\baselineskip=15pt

\voffset -1truecm
\oddsidemargin .5truecm
\evensidemargin .5truecm

\theoremstyle{plain}
\swapnumbers\newtheorem{lema}{Lemma}[section]
\newtheorem{prop}[lema]{Proposition}
\newtheorem{coro}[lema]{Corollary}
\newtheorem{teor}[lema]{Theorem}
\newtheorem{demo}[lema]{Proof of Theorem~\ref{teor:formula}}
\newtheorem{ejem}[lema]{Example}
\newtheorem{ejems}[lema]{Examples}
\newtheorem{eval}[lema]{Evaluations}
\newtheorem{obse}[lema]{Remark}
\newtheorem{obses}[lema]{Remarks}
\newtheorem{defi}[lema]{Definition}
\newtheorem{prob}[lema]{Open problem}
\newtheorem{probs}[lema]{Open problems}
\newtheorem{conje}[lema]{Conjecture}
\newtheorem{comen}[lema]{Comentary}
\newtheorem{producto}[lema]{Products}
\newtheorem{clasediag}[lema]{Diagram classes}
\newtheorem{young}[lema]{Principal border strips}
\newtheorem{removible}[lema]{Removable diagrams}
\newtheorem{collage}[lema]{Collages of skew diagrams}
\newtheorem{sorted}[lema]{Sorted decompositions}
\newtheorem{nota}[lema]{Notation}
\newtheorem{zele}[lema]{Pictures}
\newtheorem{evaliri}[lema]{Some formulas for {\mathversion{bold}$\multiliric{(n-d, \ol\nu)}$}}
\newtheorem{evakron}[lema]{Some formulas for {\mathversion{bold}${\sf g}(\la,\la, (n-d,\ol\nu))$}}
\newtheorem{polifun}[lema]{The main interval of {\mathversion{bold}$s_{\ol\nu}$}}
\newtheorem{mapfd}[lema]{The map {\mathversion{bold}$f_D$}}

\renewcommand{\refname}{\large\bf References}
\renewcommand{\thefootnote}{\fnsymbol{footnote}}

\def\natural{{\mathbb N}}
\def\noneg{\natural_0}
\def\entero{{\mathbb Z}}
\def\racional{{\mathbb Q}}
\def\real{{\mathbb R}}
\def\complejo{{\mathbb C}}
\def\planod{\natural \times \natural}
\def\vacio{\varnothing}

\def\vector#1#2{({#1}_1,\dots,{#1}_{#2})}
\def\flecha{\longrightarrow}
\def\asocia{\longmapsto}
\def\sii{\Longleftrightarrow}
\def\clase#1{\boldsymbol{[}\,{#1}\,\boldsymbol{]}}
\def\wt#1{\widetilde{#1}}
\def\ol#1{\overline{#1}}
\def\fun#1#2#3{{#1}\,\colon {#2} \flecha {#3}}
\def\nume#1{\boldsymbol{[}\,{#1}\,\boldsymbol{]}}
\def\negra#1{\boldsymbol{#1}}
\def\contenido{\subseteq}
\def\bili#1#2{\langle {#1}, {#2} \rangle}
\def\bilipunto{\bili{\,\cdot\,}{\cdot\,}}

\def\la{\lambda}
\def\longi#1{\ell({#1})}
\def\prof#1{{\sf d}({#1})}
\def\lintm{\lambda\cap\mu}
\def\rotap#1{{#1}^\circ}
\def\domina{\geqslant}
\def\dominada{\leqslant}
\def\liri{Littlewood-Richardson\ }
\def\lirim{Littlewood-Richardson multitableau\ }
\def\lirimx{Littlewood-Richardson multitableaux\ }
\def\coefliri#1#2#3{c^{\,#1}_{{#2}\,{#3}}}
\def\coefliril#1#2{c^{\,\lambda}_{{#1}\,{#2}}}
\def\ro1r{(\rho(1), \dots ,\rho(r))}
\def\rosec{\rho(1)\vdash \pi_1, \dots , \rho(r)\vdash \pi_r}
\def\clirir#1{c^{#1}_{\ro1r}}
\def\kos#1#2{K_{{#1}{#2}}}
\def\kosinv#1#2{K_{{#1}\,{#2}}^{(-1)}}
\def\multiliri#1{{\sf lr}(\la, \mu; {#1})}
\def\multiliric#1{{\sf lr}(\la, \la; {#1})}
\def\alt#1{{\sf ht}({#1})}
\def\sec#1#2{\varnothing = {#1}(0) \subset {#1}(1) \subset \cdots
\subset {#1}({#2}) }
\def\sesgala{\la/\alpha}
\def\sesgalb{\la/\beta}
\def\sesgamb{\mu/\beta}
\def\conte#1{\gamma({#1})}

\def\subcla{\trianglelefteq}
\def\subcladif{\vartriangleleft}
\def\supercla{\trianglerighteq}
\def\superclasdif{\vartriangleright}
\def\menori{\preccurlyeq}
\def\mayori{\succcurlyeq}
\def\menorie{\prec}
\def\mayorie{\succ}
\def\Diagra#1{{\sf R}_{\la}({#1})}
\def\diagra#1{{\sf r}_{\la}\left({#1}\right)}
\def\Diagrados#1#2{{\sf R}_{\la,\mu}\left({#1},{#2}\right)}
\def\diagrados#1#2{{\sf r}_{\la,\mu}\left({#1},{#2}\right)}
\def\Diagralibre#1#2{{\sf R}_{#1}\!\left({#2}\right)}
\def\diagralibre#1#2{{\sf r}_{#1}\!\left({#2}\right)}
\def\Interd#1{{\sf C}_{\la}(D_1, \dots, D_m;{#1})}
\def\interd#1{{\sf c}_{\la}(D_1, \dots, D_m;{#1})}
\def\Inter#1#2{{\sf C}_{#1}(D_1, \dots, D_m;{#2})}
\def\inter#1#2{{\sf c}_{#1}(D_1, \dots, D_m;{#2})}
\def\collaged#1{{\sf c}(D_1, \dots, D_m;{#1})}

\def\cara#1{\chi^{#1}}
\def\permu#1{\phi^{#1}}
\def\coefi{{\sf g}(\la,\mu,\nu)}
\def\ccuad#1{{\sf g}(\la,\la,{#1})}
\def\kron{\chi^\lambda\otimes\chi^\mu}
\def\tirabo#1{{\sf SBST}({#1})}
\def\signo#1{{\sf sgn}({#1})}

\def\gruposim#1{{\sf S}_{#1}}
\def\sime#1{{\sf S}_{#1}}

\def\coeflirim#1#2{c^{\,\mu}_{{#1}\,{#2}}}
\def\partil#1{\lambda({#1})}
\def\partim#1{\mu({#1})}
\def\partilintm#1{\lambda({#1})\cap\mu({#1})}
\def\pareja{(\lambda,\,\mu)}
\def\eseefe#1#2{{\sf {\mathscr{#1}}}({#2})}
\def\serifde#1#2{{\sf {#1}}({#2})}
\def\calmate#1#2{{\mathscr {#1}}_{#2}}
\def\retratos#1#2{{\mathscr P}({#1},{#2})}

\def\tablau{\tabla{ \ \\}}
\def\tabladh{\tabla{ \ & \ \\}}
\def\tabladv{\tabla{ \ \\ \ \\}}
\def\tablath{\tabla{ \ & \ & \ \\ }}
\def\tablatv{\tabla{ \ \\ \ \\ \ \\}}
\def\tablatdu{\tabla{ \ & \ \\ \ \\}}
\def\tablatud{\tabla{ & \ \\ \ & \ \\ }}
\def\tabladu{\raisebox{.1ex}{\tablau}\sqcup \raisebox{.1ex}{\tablau}}
\def\tablatu{\tabladu\sqcup \raisebox{.1ex}{\tablau}}
\def\tablatudh{\tablau\sqcup\tabladh}
\def\tablatudv{\tablau\sqcup\raisebox{.65ex}{\tabladv}}
\def\tablacu{\tablatu\sqcup \raisebox{.1ex}{\tablau}}
\def\tablacdudh{\tabladu \sqcup \tabladh}
\def\tablacdhdh{\tabladh\sqcup\tabladh}
\def\tablacdhdv{\tabladh\sqcup\raisebox{.65ex}{\tabladv}}
\def\tablacvc{\tabla{\ & \ & \ & \ \\}}
\def\tablacvuuuu{\raisebox{1.95ex}{\tabla{\ \\ \  \\ \ \\ \ \\}}}
\def\tablacvtu{\raisebox{.65ex}{\tabla{\ & \ & \ \\ \ \\}}}
\def\tablacvduu{\raisebox{1.3ex}{\tabla{\ & \ \\ \ \\ \ \\}}}
\def\tablacvdd{\raisebox{.65ex}{\tabla{\ & \ \\ \ & \ \\}}}
\def\tablacsut{\raisebox{.65ex}{\tabla{ & & \ \\ \ & \ & \ \\}}}
\def\tablacsuud{\raisebox{1.3ex}{\tabla{ & \ \\ & \ \\ \ & \ \\}}}
\def\tablacsedd{\raisebox{.65ex}{\tabla{ & \ & \ \\ \ & \ \\}}}
\def\tablacseudu{\raisebox{1.3ex}{\tabla{ & \ \\ \ & \ \\ \ \\}}}


\setlength\unitlength{0.08em}
\savebox0{\rule[-2\unitlength]{0pt}{10\unitlength}%
\begin{picture}(10,10)(0,2)
\put(0,0){\line(0,1){10}}
\put(0,10){\line(1,0){10}}
\put(10,0){\line(0,1){10}}
\put(0,0){\line(1,0){10}}
\end{picture}}

\newlength\cellsize \setlength\cellsize{18\unitlength}
\savebox2{%
\begin{picture}(18,18)
\put(0,0){\line(1,0){18}}
\put(0,0){\line(0,1){18}}
\put(18,0){\line(0,1){18}}
\put(0,18){\line(1,0){18}}
\end{picture}}
\newcommand\cellify[1]{\def\thearg{#1}\def\nothing{}%
\ifx\thearg\nothing
\vrule width0pt height\cellsize depth0pt\else
\hbox to 0pt{\usebox2\hss}\fi%
\vbox to 18\unitlength{
\vss
\hbox to 18\unitlength{\hss$#1$\hss}
\vss}}
\newcommand\tableau[1]{\vtop{\let\\=\cr
\setlength\baselineskip{-16000pt}
\setlength\lineskiplimit{16000pt}
\setlength\lineskip{0pt}
\halign{&\cellify{##}\cr#1\crcr}}}
\savebox3{%
\begin{picture}(15,15)
\put(0,0){\line(1,0){15}}
\put(0,0){\line(0,1){15}}
\put(15,0){\line(0,1){15}}
\put(0,15){\line(1,0){15}}
\end{picture}}
\newcommand\expath[1]{%
\hbox to 0pt{\usebox3\hss}%
\vbox to 15\unitlength{
\vss
\hbox to 15\unitlength{\hss$#1$\hss}
\vss}}
\newlength\celulita \setlength\celulita{5\unitlength}
\savebox3{%
\begin{picture}(5,5)
\put(0,0){\line(1,0){5}}
\put(0,0){\line(0,1){5}}
\put(5,0){\line(0,1){5}}
\put(0,5){\line(1,0){5}}
\end{picture}}
\newcommand\celificar[1]{\def\thearg{#1}\def\nothing{}%
\ifx\thearg\nothing
\vrule width0pt height\celulita depth0pt\else
\hbox to 0pt{\usebox3\hss}\fi%
\vbox to 5\unitlength{
\vss
\hbox to 5\unitlength{\hss$#1$\hss}
\vss}}
\newcommand\tablita[1]{\vtop{\let\\=\cr
\setlength\baselineskip{-16000pt}
\setlength\lineskiplimit{16000pt}
\setlength\lineskip{0pt}
\halign{&\celificar{##}\cr#1\crcr}}}

\newlength\cuadro \setlength\cuadro{7\unitlength}
\savebox4{%
\begin{picture}(7,7)
\put(0,0){\line(1,0){7}}
\put(0,0){\line(0,1){7}}
\put(7,0){\line(0,1){7}}
\put(0,7){\line(1,0){7}}
\end{picture}}
\newcommand\cuadrificar[1]{\def\thearg{#1}\def\nothing{}%
\ifx\thearg\nothing
\vrule width0pt height\cuadro depth0pt\else
\hbox to 0pt{\usebox4\hss}\fi%
\vbox to 7\unitlength{
\vss
\hbox to 7\unitlength{\hss$#1$\hss}
\vss}}
\newcommand\tabla[1]{\vtop{\let\\=\cr
\setlength\baselineskip{-16000pt}
\setlength\lineskiplimit{16000pt}
\setlength\lineskip{0pt}
\halign{&\cuadrificar{##}\cr#1\crcr}}}


\begin{centering}
{\Large\bf A diagrammatic approach to Kronecker squares}\\[1cm]
{\Large\sf Ernesto Vallejo\footnotemark[1]}\\[.1cm]
Centro de Ciencias Matem\'aticas\\
Universidad Nacional Aut\'onoma de M\'exico\\
Apartado Postal 61-3, Xangari\\
58089 Morelia, Mich., MEXICO\\
e-mail: {\tt vallejo@matmor.unam.mx}\\[.4cm]
\end{centering}

\vskip 2pc
\begin{abstract}
In this paper we improve a method of Robinson and Taulbee for computing Kronecker
coefficients and show that for any partition
$\ol\nu$ of $d$ there is a polynomial $k_{\ol\nu}$ with rational coefficients
in variables $x_C$, where $C$ runs over the set of isomorphism classes of connected
skew diagrams of size at most $d$, such that for all partitions $\la$ of $n$, the
Kronecker coefficient ${\sf g}(\la,\la, (n-d,\ol\nu))$ is obtained from $k_{\ol\nu}(x_C)$
substituting each $x_C$ by the number of partitions $\alpha$ contained in $\la$
such that $\la/\alpha$ is in the class $C$.
Some results of our method extend to arbitrary Kronecker coefficients.
We present two applications.
The first is a contribution to the Saxl conjecture, which asserts that if
$\rho_k = (k,k-1, \dots, 2, 1)$ is the staircase partition, then the Kronecker
square $\cara\rho \otimes \cara\rho$ contains every irreducible character of the
symmetric group as a component.
Here we prove that for any partition $\ol\nu$ of size $d$ there is a piecewise
polynomial function $s_{\ol\nu}$ in one real variable such that for all $k$
one has ${\sf g}(\rho_k, \rho_k, (|\rho_k| - d,\ol\nu)) = s_{\ol\nu}(k)$.
The second application is a proof of a new stability
property for arbitrary Kronecker coefficients.

\smallskip
AMS subject classification: 05E10, 20C30, 05E05.

{\em Key Words}: Kronecker product, Young tableau, Schur function, Kostka number,
Littlewood-Richardson rule.
\end{abstract}

\section{Introduction}

Let $\cara\la$ be the irreducible character of the symmetric group $\sime n$
associated to the partition $\la$ of $n$.
It is a major open problem in the representation theory of the symmetric group
in characteristic 0 to find a combinatorial or geometric
description of the multiplicity
\begin{equation} \label{ecua:kron}
 \coefi = \langle \kron, \cara\nu \rangle
\end{equation}
of $\cara\nu$ in the Kronecker product of $\kron$ of $\cara\la$
and $\cara\mu$ (here $\langle \cdot, \cdot \rangle$ denotes the
scalar product of complex characters).
Seventy five years ago Murnaghan~\cite{mur} published the first paper on
the subject.
Since then many people have searched out satisfactory ways for
computing the Kronecker coefficients $\coefi$.
Still, very little is known about the general problem.

Among the things known, there is a method for computing arbitrary Kronecker
coefficients.
It was introduced by Robinson and Taulbee in~\cite{rota} (see also~\cite[\S 3.4]{rob})
and reworked by Littlewood in~\cite{lit}.
In~\cite[\S 2.9]{jake} we can find the original method of Robinson and Taulbee,
another variation of it and some applications.
We will refer to this method and to any of its variations as the RT method.
Its main ingredients are the Jacobi-Trudi determinant, Frobenius
reciprocity and the Littlewood-Richardson rule.
Some of its applications can be found in~\cite{avala,sax,vpp,zis}.
Another variation of the RT method appears in~\cite[\S 6]{gare}.
Some applications of this variation are given in \cite{baor,baor2,rem}.

The version of the RT method from~\cite[p. 98]{jake} suggests how to systematize it
by means of the so called Littlewood-Richardson multitableaux (or simply LR multitableaux),
see~\cite{don,vpp}.
This technique, as it is already apparent from~\cite{jake}, is not only useful for
computations:
it also lead in~\cite{vejc} to a combinatorial proof of a stability property for Kronecker
coefficients observed by Murnaghan in~\cite{mur} and to the determination
of a lower bound for stability.
Other approaches to stability have been developed in \cite{bor,bri,lit2,thi}.
A dual approach of LR multitableaux was used in~\cite{vjac}
to study minimal components, in the dominance order of partitions, of Kronecker
products.
LR multitableaux were also used in~\cite{avva} to construct a one-to-one correspondence
between the set 3-dimensional matrices with integer entries and given 1-marginals
and the set of certain triples of tableaux.
This correspondence generalize the RSK correspondence and was used to describe
combinatorially some Kronecker coefficients.

In~\cite{vpp} we gave graphical formulas for the coefficients $\ccuad \nu$ of Kronecker
squares for all partitions $\nu = (n-d, \ol\nu)$ of depth $d\le 3$.
These computations were extended in~\cite{avala} to all partitions $\nu$ of depth $4$.
We include them in Section~\ref{sec:evalk} for completeness.
Some of them had appeared before in an algebraic but equivalent form in~\cite{gara,sax,zis};
some have already been applied in \cite{bes, bekl, bevw, ppv};
others may be suitable for future applications.

In this paper we prove that the formulas obtained in~\cite{vpp} and~\cite{avala}
are part of a general phenomenon (Theorem~\ref{teor:polikrontirabo}).
Namely, for each partition $\ol\nu$  of size $d$, there is a polynomial $k_{\ol\nu}(x_C)$
with rational coefficients in variables $x_C$, where $C$ runs over the set of isomorphism
classes of connected skew diagrams of size $|C| \le d$, such that
for each partition $\la$ of $n$, the Kronecker coefficient $\ccuad {(n-d,\ol\nu)}$
is obtained from $k_{\ol\nu}(x_C)$ by evaluating each $x_C$ at the number of partitions
$\alpha$ of $n-d$ contained in $\la$ such that $\la/\alpha$ is in $C$.
These polynomials do not depend on $\la$ or $n$.
In fact, we also show (Theorem~\ref{teor:tiras-borde-kron}) that $k_{\ol\nu}(x_C)$ can be modified
to obtain another polynomial $\wt k_{\ol\nu}(t_B)$ in variables $t_B$, where $B$ runs over
the set of isomorphism classes of connected border strips of size $|B| \le d$.
Then a similar evaluation of $\wt k_{\ol\nu}(t_B)$ also yields the corresponding
Kronecker coefficient.
Theorem~\ref{teor:polikrontirabo} is derived from Theorem~\ref{teor:kron-combin}
which is an enhancement of the RT method described above that gives a closed
combinatorial formula (up to signs) of Kronecker coefficients.
It incorporates the new notion of $\la$-removable diagram and a convenient use
of special border strip tableaux.
We will show its utility in Sections~\ref{sec:saxl} and~\ref{sec:estab}.
Theorem~\ref{teor:kron-combin} can be extended to arbitrary Kronecker coefficients
(see Theorem~\ref{teor:kron-combin-dos}).
This approach to Kronecker coefficients should be contrasted with
Murnaghan's~\cite{mur,mur2,mur3,mur4}, where
for any two partitions $\la = (n-a, \ol\la)$, $\mu= (n-b,\ol\mu)$ of $n$
a method is given to compute the expansion $\cara\la \otimes \cara\mu$ in terms
of $\ol\la$ and $\ol\mu$.
Another method for computing the same expansion is given by Littlewood in~\cite{lit2},
and a formula of Thibon that encompasses Murnaghan and Littlewood's approaches
appear in~\cite[\S 2]{thi}.

Besides we present two applications of the diagrammatic approach developed here.
The first is a contribution to the solution of the Saxl conjecture studied for the first time
in~\cite{ppv}.
Let $\rho_k$ be the staircase partition $(k, k-1, \dots, 2,1)$.
Saxl's conjecture asserts that the Kronecker product $\cara{\rho_k} \otimes \cara{\rho_k}$
contains every irreducible representation of the symmetric group as a component.
Here we show what we believe to be a surprising result: Theorem~\ref{teor:kronescalerapoli},
which says that for each partition $\ol\nu$
of size $d$ there is a piecewise polynomial function with rational coefficients
$\fun {s_{\ol\nu} }{[0,\infty)}\real$ such that
\begin{equation*}
{\sf g}(\rho_k, \rho_k, (|\rho_k| - d,\ol\nu)) = s_{\ol\nu}(k)
\end{equation*}
for all $k$ such that $(n_k-d,\ol\nu)$ is a partition.
This is the more surprising since the product $\cara{\rho_k} \otimes \cara{\rho_k}$ seems
to be the most difficult product of size $n_k$ to evaluate (see~\cite[p. 93]{lit}).
A further analysis (Theorem~\ref{teor:mi-saxl}) shows that
${\sf g}(\rho_k, \rho_k, (n_k-d,\ol\nu))$ is positive for all but at most $2d$
values of $k$.
The second result (Theorem~\ref{teor:estab-nuevo}) shows a new stability property
of arbitrary Kronecker coefficients that is evident once we know
Theorems~\ref{teor:kron-combin} and~\ref{teor:kron-combin-dos}.
Some of the results presented in this paper appeared previously
in preprint form in~\cite{vpp,vsqkron}.

In the last years it has been discovered that Kronecker coefficients
are related in an important way to two areas beyond algebraic combinatorics and
representation theory.
First there is the realization that Kronecker coefficients play an important role
in geometric complexity theory~\cite{muso1,muso2,blmw}.
Secondly, there is the discovery that Kronecker coefficients are related to the
quantum marginal problem~\cite{kly, chmi}.
The techniques developed here could be useful in solving problems regarding Kronecker
coefficients coming from these fields.

The paper is organized as follows.
In Section~\ref{sec:parti-tableaux} we review some known results needed in this paper
on the combinatorics of Young tableaux.
Section~\ref{sec:cara} contains some basic results about the character theory of
the symmetric group that will be used throughout.
In Section~\ref{sec:multita} we present several results concerning LR multitableaux.
Theorem~\ref{teor:coefi-dec} has not been published before, but it appears already in a
similar form in~\cite[Corollary~4.3]{vpp}.
In Section~\ref{sec:equiv} we introduce the notion of $\la$-removable diagram
(Paragraph~\ref{def:removible}).
This is the fundamental concept for our diagrammatic method.
A skew diagram $\sigma$ is $\la$-removable if there is a partition $\alpha$ contained in
$\la$ such that $\la/\alpha = \sigma$.
Given the isomorphism class $D$ of a skew diagram $\sigma$ and a partition $\la$, we denote by
$\diagra D$ the number of $\la$-removable diagrams isomorphic to $\sigma$.
In Theorem~\ref{teor:elbueno} we show that for each isomorphism class $D$ of skew diagrams,
the number $\diagra D$ can be expressed as a polynomial with rational coefficients
in variables $\diagra C$, where $C$ runs over the set of all isomorphism classes of
connected skew diagrams of size $|C| \le |D|$.
Section~\ref{sec:evallr} contains several calculations for the number of pairs
of LR multitableaux that will be used in the rest of the paper.
Section~\ref{sec:evalk} is the core of the paper.
It contains Theorems~\ref{teor:kron-combin}, \ref{teor:polikrontirabo}
and~\ref{teor:tiras-borde-kron} already mentioned.
We also include two formulas for $\ccuad {(n-d, \ol\nu)}$ when $\la$ is a rectangle
of size $n$ and $\ol\nu$ is either $(d)$ or $(1^d)$.
In Section~\ref{sec:saxl} we present our contribution to the Saxl conjecture, a table
with the polynomials $s_{\ol\nu}$ for all $|\ol\nu| \le 5$ and some conjectures.
In Section~\ref{sec:ext} we sketch how to extend the diagrammatic method to
arbitrary Kronecker coefficients.
Finally, Section~\ref{sec:estab} contains a new stability property that holds for
arbitrary Kronecker coefficients.

\section{Partitions and tableaux} \label{sec:parti-tableaux}

We assume the reader is familiar with the standard results in the
combinatorics of Young tableaux (see for example~\cite{ful, mac, sag, stan}).
In this section we review some of those basic results, definitions and notation
used in this paper.

We will use the following notation:
$\natural$ is the set of positive integers, $\noneg =\natural \cup \{0\}$, and,
for any $n\in \noneg$,  $\nume n = \{1, \dots, n\}$, so that $\nume 0 = \varnothing$.
If $\la$ is a partition, we denote its \emph{size} by $|\la |$
and its \emph{length} by $\longi{\la}$.
If $|\la|=n$, we also write $\la\vdash n$.
The {\em depth} of $\la$ is $\prof\la = |\la| - \la_1$.
For any composition $\pi= \vector \pi r$, that is, a vector of positive integers,
denote $\ol\pi =(\pi_2, \dots, \pi_r)$,
$|\pi| = \pi_1 + \cdots + \pi_r$ and $\longi \pi = r$.
If $|\pi|=n$, we also write $\pi \vDash n$.
Thus, for a partition $\la$ one has $\prof \la = |\ol\la|$.
Given two partitions $\la$, $\mu$ of $n$ we write $\la \domina \mu$
to indicate that $\la$ is greater than or equal to $\mu$ in the dominance
order of partitions.
The \emph{diagram} of a partition $\la = \vector \la p$, also denoted by $\la$,
is the set
\[
\la = \{\, ( i, j) \in \planod \mid i \in \nume p,\ j \in \nume{\la_i} \,\}.
\]
The identification of $\la$ with its diagram permits us to use
set theoretic notation for partitions.
The partition $\la^\prime$ \emph{conjugate} to $\la$ is obtained
by transposing the diagram of $\la$, that is,
$\la^\prime = \{\, (i, j) \mid (j,i) \in \la \,\}$.
If $\alpha$ is another partition and $\alpha \subseteq \la$,
we denote by $\lambda/ \alpha$ the \emph{skew diagram} consisting of the
pairs in $\la$ that are not in $\alpha$.
The \emph{size} $|\sesgala|$ of $\sesgala$ is its cardinality and
the \emph{conjugate} of $\sesgala$ is $\la^\prime/\alpha^\prime$.

The number of semistandard Young tableaux of shape $\la$ and
content $\pi$ is denoted by $\kos\la\pi$.
Let us arrange the partitions of $n$ in reverse lexicographic order
and form the matrix $K_n =(\kos\la\mu )$.
It is invertible over the integers.
Let $K_n^{-1}= (\kos\la\mu^{(-1)})$ denote its inverse.
We now explain the combinatorial description of the numbers
$\kos\la\mu^{(-1)}$ given in \cite{egre}.
Recall that a \emph{border strip} is a connected skew diagram
that contains no subset of the form $\raisebox{.15ex}{\tablacvdd}\,$.
A \emph{special border strip tableau} $T$ of shape $\mu$ is an
increasing sequence of partitions
\[
\varnothing =\mu(0)\subset \mu(1)\subset \cdots \subset \mu(c)=\mu,
\]
such that each $\mu(j)/\mu(j-1)$ is a special border strip of $\mu$, namely
a border strip that intersects the first column of $\mu$.
The \emph{sign} of $T$ is
\[
\signo T =  \prod_j (-1)^{(\alt{ \mu(j)/\mu(j-1)})} ,
\]
where the \emph{height} $\alt{ \mu(j)/\mu(j-1)}$ is the number of rows
of $\mu(j)/\mu(j-1)$ minus one.
The \emph{content}, denoted by $\conte T$, is the sequence
\[
\vert \mu(1)/\mu(0)\vert, \dots, \vert \mu(c)/\mu(c-1)\vert
\]
of sizes of the special border strips arranged in decreasing order.
Later it will be convenient to work with a reordering of $\conte T$.
Denote by $\tirabo \mu$ the set of all special border strip tableaux of
shape $\mu$.
E\~gecio\~glu and Remmel showed the following

\begin{teor}[\cite{egre}] \label{teor:er}
For any pair of partitions $\la$, $\mu$ of $n$ one has
\[
\kos\la\mu^{(-1)} =\sum_{T \in \tirabo \mu,\ \conte T = \la} \signo T.
\]
\end{teor}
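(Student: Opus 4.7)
The plan is to derive the formula from the Jacobi--Trudi determinant expansion of $s_\la$ in the basis $\{h_\mu\}$ and identify the resulting signed sum as the stated sum over special border strip tableaux of shape $\mu$ with content $\la$.

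First I would invoke the defining identity $s_\la = \sum_\mu \kosinv\la\mu\, h_\mu$ in the ring of symmetric functions, so the goal reduces to extracting the coefficient of $h_\mu$ from any explicit $h$-expansion of $s_\la$. Jacobi--Trudi provides exactly such an expansion: for any $p \ge \longi\la$, with the conventions $h_0=1$ and $h_k=0$ for $k<0$,
\[
s_\la \;=\; \det\bigl(h_{\la_i - i + j}\bigr)_{1 \le i,j \le p} \;=\; \sum_{\sigma \in \sime p} \signo\sigma\; h_{\alpha(\sigma)},
\]
where $\alpha(\sigma) = (\la_i + \sigma(i) - i)_{i=1}^p$. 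Since $h_\alpha$ depends only on the multiset of its nonzero parts, grouping permutations by the partition $\mu$ obtained by sorting the positive entries of $\alpha(\sigma)$ in decreasing order yields
\[
\kosinv\la\mu \;=\; \sum_{\sigma\,:\,\alpha(\sigma)\,\sim\,\mu} \signo\sigma.
\]

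Next, I would construct a sign-preserving bijection between the permutations $\sigma$ contributing on the right-hand side and the tableaux $T \in \tirabo\mu$ with $\conte T = \la$. The key idea is that each such $\sigma$ encodes a way to build $\mu$ by successively adding special border strips whose sizes, read in the order prescribed by $\sigma$, are a rearrangement of the parts of $\la$: the value $\sigma(i)$ records the row of the lowest cell of the $i$th strip, and the inequality $\la_i + \sigma(i) - i \ge 0$ corresponds exactly to the geometric condition that this strip fits into the currently remaining shape. The sign identity $\signo\sigma = \signo T$ is then immediate once one checks that a border strip of height $h$ corresponds to a single cycle of length $h+1$ in $\sigma$, of sign $(-1)^h$.

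The main obstacle is handling the ``bad'' permutations whose associated strip-removal data fails to produce a valid special border strip tableau---for instance because a prescribed strip would be disconnected, would not touch the first column, or would lie outside the remaining shape. The standard remedy, and the technical heart of the argument, is to introduce a canonical sign-reversing involution on these permutations---typically by locating the first pair of rows whose assignments conflict and swapping their $\sigma$-values, which preserves $\alpha(\sigma)$ and flips $\signo\sigma$---so that the bad contributions cancel in pairs. Once this involution is set up and the surviving permutations are identified with the elements of $\tirabo\mu$ of content $\la$ under matching signs, the identity of the theorem follows.
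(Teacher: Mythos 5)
The paper itself does not prove this statement: Theorem~\ref{teor:er} is imported from E\~gecio\~glu and Remmel~\cite{egre} and used as a black box, so there is no internal proof to compare yours against. Your overall plan (Jacobi--Trudi expansion plus a combinatorial identification of the surviving signed terms) is the natural route and is close in spirit to the published arguments, but as written it contains two concrete errors.

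First, your indices are transposed. With the paper's convention, equation~\eqref{ec:jt} reads $\cara\nu = \sum_{\la} \kosinv\la\nu \permu\la$, so $\kosinv\la\mu$ is the coefficient of $h_\la$ in $s_\mu$; your opening identity $s_\la = \sum_\mu \kosinv\la\mu h_\mu$ reverses the roles, and $K_n^{-1}$ is not symmetric (for instance $\kosinv{(2)}{(1,1)} = -1$ while $\kosinv{(1,1)}{(2)} = 0$). You must therefore expand $s_\mu = \det\bigl(h_{\mu_i - i + j}\bigr)$, the determinant attached to the \emph{shape} of the tableaux, so that the border strips tile $\mu$ and their sizes sort to $\la$. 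As you have set it up, the determinant for $s_\la$ produces strip sizes $\la_i + \sigma(i) - i$ sorting to $\mu$, i.e.\ tableaux of shape $\la$ and content $\mu$ --- the transpose of the assertion.

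Second, and more seriously, the cancellation mechanism you identify as the technical heart does not work. Swapping $\sigma(i) \leftrightarrow \sigma(i')$ replaces the pair $\{\mu_i - i + \sigma(i),\, \mu_{i'} - i' + \sigma(i')\}$ by $\{\mu_i - i + \sigma(i'),\, \mu_{i'} - i' + \sigma(i)\}$, and since the numbers $\mu_i - i$ are pairwise distinct for a partition, these multisets are \emph{never} equal; your involution therefore does not preserve $\alpha(\sigma)$ and sends the term to a different $h$-coefficient, where it cannot cancel anything. (This is precisely why the lattice-path involution proving Jacobi--Trudi does not refine to the present statement: it cancels at the level of monomials, not of $h_\mu$'s.) A correct argument must either produce a content-preserving involution on the bad terms, or follow E\~gecio\~glu and Remmel in proving directly that the signed count of special border strip tableaux multiplies against the Kostka matrix to give the identity matrix, via a sign-reversing involution on pairs consisting of a semistandard tableau and a special border strip tableau. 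Either way, this step of your proposal has to be rebuilt.
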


For example, there are six special border strip tableaux of shape
$\mu=(5, 3, 2)$, which are indicated below with its content and sign.

\[
\begin{array}{ccc}
\begin{picture}(100,80)(0,-80)

\drawline (0,0)  (100,0)
\drawline (0,-20) (100,-20)
\drawline (0,-40) (60,-40)
\drawline (0,-60) (40,-60)

\drawline (0,0)   (0,-60)
\drawline (20,0)  (20,-60)
\drawline (40,0)  (40,-60)
\drawline (60,0)  (60,-40)
\drawline (80,0)  (80,-20)
\drawline (100,0) (100,-20)

\put (10,-10){\circle*{2}}
\put (10,-30){\circle*{2}}
\put (10,-50){\circle*{2}}

\drawline (10,-10) (90,-10)
\drawline (10,-30) (50,-30)
\drawline (10,-50) (30,-50)

\put (0,-75){(5, 3, 2)\quad +}
\end{picture}
&
\quad
\begin{picture}(100,80)(0,-80)

\drawline (0,0)  (100,0)
\drawline (0,-20) (100,-20)
\drawline (0,-40) (60,-40)
\drawline (0,-60) (40,-60)

\drawline (0,0)   (0,-60)
\drawline (20,0)  (20,-60)
\drawline (40,0)  (40,-60)
\drawline (60,0)  (60,-40)
\drawline (80,0)  (80,-20)
\drawline (100,0) (100,-20)

\put (10,-10){\circle*{2}}
\put (10,-30){\circle*{2}}
\put (10,-50){\circle*{2}}

\drawline (10,-10) (30,-10)
\drawline (10,-30) (50,-30) (50,-10) (90,-10)
\drawline (10,-50) (30,-50)

\put (0,-75){(6, 2, 2)\quad $-$}
\end{picture}
\quad
&
\begin{picture}(100,80)(0,-80)

\drawline (0,0)  (100,0)
\drawline (0,-20) (100,-20)
\drawline (0,-40) (60,-40)
\drawline (0,-60) (40,-60)

\drawline (0,0)   (0,-60)
\drawline (20,0)  (20,-60)
\drawline (40,0)  (40,-60)
\drawline (60,0)  (60,-40)
\drawline (80,0)  (80,-20)
\drawline (100,0) (100,-20)

\put (10,-10){\circle*{2}}
\put (10,-30){\circle*{2}}
\put (10,-50){\circle*{2}}

\drawline (10,-10) (90,-10)
\drawline (10,-50) (30,-50) (30,-30) (50,-30)

\put (0,-75){(5, 4, 1)\quad $-$}
\end{picture}
\\
\begin{picture}(100,90)(0,-80)

\drawline (0,0)  (100,0)
\drawline (0,-20) (100,-20)
\drawline (0,-40) (60,-40)
\drawline (0,-60) (40,-60)

\drawline (0,0)   (0,-60)
\drawline (20,0)  (20,-60)
\drawline (40,0)  (40,-60)
\drawline (60,0)  (60,-40)
\drawline (80,0)  (80,-20)
\drawline (100,0) (100,-20)

\put (10,-30){\circle*{2}}
\put (10,-50){\circle*{2}}

\drawline (10,-30) (10,-10) (90,-10)
\drawline (10,-50) (30,-50) (30,-30) (50,-30)

\put (0,-75){(6, 4)\quad +}
\end{picture}
&
\begin{picture}(100,90)(0,-80)

\drawline (0,0)  (100,0)
\drawline (0,-20) (100,-20)
\drawline (0,-40) (60,-40)
\drawline (0,-60) (40,-60)

\drawline (0,0)   (0,-60)
\drawline (20,0)  (20,-60)
\drawline (40,0)  (40,-60)
\drawline (60,0)  (60,-40)
\drawline (80,0)  (80,-20)
\drawline (100,0) (100,-20)

\put (10,-10){\circle*{2}}
\put (10,-30){\circle*{2}}
\put (10,-50){\circle*{2}}

\drawline (10,-10) (30,-10)
\drawline (10,-50) (30,-50) (30,-30) (50,-30) (50,-10) (90,-10)

\put (0,-75){(7, 2, 1)\quad +}
\end{picture}
&
\begin{picture}(100,90)(0,-80)

\drawline (0,0)  (100,0)
\drawline (0,-20) (100,-20)
\drawline (0,-40) (60,-40)
\drawline (0,-60) (40,-60)

\drawline (0,0)   (0,-60)
\drawline (20,0)  (20,-60)
\drawline (40,0)  (40,-60)
\drawline (60,0)  (60,-40)
\drawline (80,0)  (80,-20)
\drawline (100,0) (100,-20)

\put (10,-30){\circle*{2}}
\put (10,-50){\circle*{2}}

\drawline (10,-30) (10,-10) (30,-10)
\drawline (10,-50) (30,-50) (30,-30) (50,-30) (50,-10) (90,-10)

\put (0,-75){(7, 3)\quad $-$}
\end{picture}
\end{array}
\]

The number of Littlewood-Richardson tableaux (or simply LR tableaux) of shape
$\lambda/\alpha$ and content $\mu$ will be denoted by $c^{\la/\alpha}_\mu$.
More generally, a sequence $T=(T_1, \dots , T_r)$ of tableaux is called an
\emph{LR multitableau} of \emph{shape} $\la/\alpha$ if
there is a sequence of partitions
\[
\alpha=\la(0) \subset \la(1) \subset \cdots \subset \la(r) = \la\, ,
\]
such that $T_i$ is LR tableau of shape $\la(i)/\la(i-1)$,
for all $i\in \nume r$.
The \emph{content} of $T$ is the sequence $\ro1r$ , where
$\rho(i)$ is the content of $T_i$;
the \emph{type} of $T$ is the composition $\vector{\pi}r$, where
$\pi_i = |\rho(i)|$.
For example,

\[
\raisebox{.6ex}{
\tableau{
{\bf 1} & {\bf 1} & {\bf 1} & {\bf 1} & {\it 1} & {\it 1} & {\it 1} & {\tt 1}
& {\tt 1} & {\tt 1} \\
{\bf 2} & {\bf 2} & {\bf 2} & {\bf 2} & {\it 2} & {\tt 2} & {\tt 2}& {\tt 2} \\
{\bf 3} & {\bf 3} & {\it 2}  & {\it   2}  & {\it 3} \\
{\it 3} & {\tt 3} \\}}
\]

\medskip
\noindent is an LR multitableau of shape $(10,8,5,2)$, content $((4,4,2),(3,3,2),(3,3,1))$
and type $(10,8,7)$.
The number of LR multitableaux of shape $\la/\alpha$ and content $\ro1r$
will be denoted by $\clirir {\la/\alpha}$.
Since there is a unique LR tableau of shape $\alpha$ and
content $\alpha$ we have the identity
\begin{equation} \label{ecua:lirisesgado}
\clirir{\la/\alpha} = c^{\la}_{(\alpha, \rho(1), \dots, \rho(r))}\, .
\end{equation}

Also note that
\[
\clirir{\la/\alpha} =
\sum_{\alpha=\la(0) \subset \la(1) \subset \cdots \subset \la(r) = \la}
\ \prod_{i=1}^r c^{\la(i)/\la(i-1)}_{\rho(i)} \, .
\]
Therefore, since $c^{\la/\alpha}_\beta =
c^{\la^\prime/\alpha^\prime}_{\beta^\prime}$,
we also have
\begin{equation} \label{ecua:conjuliri}
\clirir{\la/\alpha} =
c^{\la^\prime/\alpha^\prime}_{(\rho(1)^\prime, \dots, \rho(r)^\prime)}.
\end{equation}

\section{Characters of the symmetric group} \label{sec:cara}

We assume the reader is familiar with the standard results in the
representation theory of the symmetric group (see for example~\cite{jake, mac, sag, stan}).
In this section we review some of those basic results, definitions and notation
used in this paper.

For any partition $\la \vdash n$, we denote by $\cara\la$ the irreducible character
of $\sime n$ associated to $\la$, and, for any composition $\pi = \vector \pi r$ of $n$,
by $\permu\pi = {\rm Ind}_{\sime \pi}^{\sime n}( 1_{\pi})$ the
permutation character associated to $\pi$.
That is, $\permu\pi$ is the character induced from the trivial
character of the Young subgroup
$\sime \pi = \sime{\pi_1} \times \cdots \times \sime{\pi_r}$ to $\sime n$.
Note that if a composition $\rho$ of $n$ is a reordering of $\pi$, then
\begin{equation} \label{ecua:reorder}
\permu\pi = \permu\rho.
\end{equation}
The sets $\{ \cara\la \mid \la \vdash n \}$ and $\{ \permu\la \mid \la \vdash n \}$
are additive basis of the character ring of $\sime n$.
Both basis are related by Young's rule
\begin{equation*}
\permu\nu = \sum_{\la\vdash n} \kos\la\nu \cara\la. 
\end{equation*}
By the Jacobi-Trudi determinant one has
\begin{equation} \label{ec:jt}
\cara\nu= \sum_{\la \vdash n} \kos\la\nu^{(-1)} \permu\la.
\end{equation}
Then, Theorem~\ref{teor:er} implies
\begin{equation} \label{ec:jtdos}
\cara\nu= \sum_{T \in \tirabo \nu} \signo T \permu{\conte T}.
\end{equation}

\begin{defi} \label{defi:nuevo-cont}
{\em
Let $\nu = \vector \nu r$ be a partition and $T\in \tirabo\nu$.
Denote by $\tau(T)$ the vector $\vector tl$ such that $t_1$ is the size
of the border strip that contains the square $(1,\nu_1)$ and $t_2, \dots, t_l$
are the sizes of the remaining special border strips arranged in
non-increasing order.
The vector $\tau(T)$ is not a partition in general, but for $\nu_1$ big enough
it is a partition.
It will be useful to work with $\tau(T)$ as the \emph{content} of $T$
instead of the usual content $\gamma(T)$ defined above,
which by definition is always a partition.
Let also denote $\ol\tau(T)= (t_2, \dots, t_l)$
and $e(T)=|\ol\tau(T)|$.
}
\end{defi}

Equation~\eqref{ec:jtdos} can be rewritten as
\begin{equation} \label{ecua:jttres}
\cara\nu = \sum_{T\in \tirabo \nu}  \signo T \permu{\tau(T)}.
\end{equation}

In the rest of the paper we use the following

\begin{nota} \label{nota:partis}
{\em
Let $\ol\nu = (\nu_2, \dots, \nu_r)$ be a partition of $d$.
For any $n\ge d + \nu_2$ let
\[
\ol\nu(n) = (n-d, \ol\nu) = (n-d, \nu_2, \dots, \nu_r).
\]
This is a partition of $n$.
For simplicity we denote $\wt \nu = \ol\nu(d + \nu_2) = (\nu_2, \nu_2, \dots, \nu_r)$.
}
\end{nota}

Then we have

\begin{lema} \label{lema:biyec-tiraborde}
Let $\ol\nu$ be a partition of $d$.
Then, for any $n\ge d + \nu_2$ there is a sign preserving bijection
\[
\fun {B_n} {\tirabo{\wt\nu}} {\tirabo{\ol\nu(n)}}
\]
such that for all $T\in \tirabo{\wt\nu}$ one has
$\tau(B_n(T))= \tau(T) + (n-d-\nu_2, 0, \dots, 0)$.
\end{lema}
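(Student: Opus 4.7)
My plan is to build $B_n$ by surgery on the single special border strip of $T$ that contains the right end of the first row. Given $T \in \tirabo{\wt\nu}$ encoded by $\varnothing = \mu(0) \subset \mu(1) \subset \cdots \subset \mu(c) = \wt\nu$, let $j$ be the unique index with $(1,\nu_2) \in \mu(j) \setminus \mu(j-1)$ and let $B_j = \mu(j)/\mu(j-1)$. Put $\mu'(i) = \mu(i)$ for $i < j$, and for $i \ge j$ let $\mu'(i)$ be obtained from $\mu(i)$ by enlarging the first row by $n - d - \nu_2$. Using $n \ge d + \nu_2$, every $\mu'(i)$ is a partition, $\mu'(c) = \ol\nu(n)$, and the strips $\mu'(i)/\mu'(i-1)$ agree with $\mu(i)/\mu(i-1)$ for $i \ne j$, while the $j$-th strip becomes $B'_j := B_j \cup \{(1,\nu_2+1), \dots, (1,n-d)\}$.

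The map $B_n(T) = (\mu'(0) \subset \cdots \subset \mu'(c))$ lands in $\tirabo{\ol\nu(n)}$ provided $B'_j$ is still a special border strip, and this is the key thing to check: it is connected because the appended cells form an interval that meets $(1,\nu_2) \in B_j$; it has no $2 \times 2$ block because the new cells lie in row $1$ while row $2$ of $\ol\nu(n)$ has length only $\nu_2$, so no row-$2$ cell exists in columns $> \nu_2$; and it meets column $1$ since $B_j$ did. For the inverse, given $T' \in \tirabo{\ol\nu(n)}$ locate the strip $B'$ containing $(1,n-d)$, write its row-$1$ cells as $(1,k_0),\dots,(1,n-d)$, and truncate to $(1,k_0),\dots,(1,\nu_2)$. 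The one delicate point is $k_0 \le \nu_2$, which I expect to be the main obstacle: as a skew shape either $B'$ sits entirely in row $1$, in which case speciality forces $k_0 = 1$, or $B'$ has a row-$2$ portion, and then the no-$2\times 2$ condition forces its row-$1$ and row-$2$ intervals to overlap in exactly one column, which is $\le \nu_2$ because row $2$ of $\ol\nu(n)$ has length $\nu_2$.

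Finally, both maps only add or delete cells in row $1$ of a single strip, so the number of rows of the modified strip is unchanged; hence the height, and therefore the sign, of every strip is preserved, giving $\signo{B_n(T)} = \signo T$. All other strips have identical sizes in $T$ and $B_n(T)$, so the components of $\ol\tau$ are unchanged; the distinguished strip--which is the one containing $(1,\nu_2)$ in $\wt\nu$ and the one containing $(1,n-d)$ in $\ol\nu(n)$, and thus plays the role of $t_1$ in $\tau$ on both sides--grows by exactly $n - d - \nu_2$. This gives $\tau(B_n(T)) = \tau(T) + (n-d-\nu_2, 0, \dots, 0)$ as required; everything beyond the structural fact $k_0 \le \nu_2$ is routine bookkeeping on the chain of partitions.
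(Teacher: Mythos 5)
Your construction is exactly the paper's: extend the special border strip containing $(1,\nu_2)$ by the cells $(1,\nu_2+1),\dots,(1,n-d)$, leaving all other strips untouched. The paper states this in two sentences and asserts bijectivity; your additional verifications (that the enlarged strip is still a special border strip, that the inverse is well defined via $k_0\le\nu_2$, and that heights are preserved) are correct and fill in precisely the details the paper leaves to the reader.
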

\begin{proof}
Let $T\in \tirabo{\wt\nu}$.
To the border strip in $T$ of size $t_1$ that contains the square $(1,\nu_2)$
we add the squares $(1,\nu_2+1), \dots, (1, n -d)$ and form a new
border strip of size $t_1 + n -d - \nu_2$.
Then $B_n(T)$ is the tableau formed by this new border strip plus the
remaining border strips of $T$.
This defines the desired bijection.
\end{proof}

Then we get a stable version (with respect to the first row) of equation~\eqref{ecua:jttres}:

\begin{coro} \label{coro:cara-estab}
Let $\ol\nu$ be a partition of $d$.
Then for any $n \ge d+\nu_2$ we have
\[
\cara{\,\ol\nu(n)} = \sum_{T\in \tirabo {\wt\nu}}  \signo T \, \permu{\tau(B_n(T))}.
\]
\end{coro}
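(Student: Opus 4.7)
The plan is to derive this identity as an immediate corollary of equation~\eqref{ecua:jttres}, by re-indexing the sum over special border strip tableaux through the bijection $B_n$ provided by Lemma~\ref{lema:biyec-tiraborde}.

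First, I would apply equation~\eqref{ecua:jttres} to the partition $\ol\nu(n) = (n-d, \nu_2, \dots, \nu_r)$, which yields
\[
\cara{\,\ol\nu(n)} = \sum_{T'\in \tirabo{\ol\nu(n)}} \signo{T'}\,\permu{\tau(T')}.
\]
Next, since the hypothesis $n\ge d+\nu_2$ is exactly the one needed to invoke Lemma~\ref{lema:biyec-tiraborde}, I would reparametrize each $T' \in \tirabo{\ol\nu(n)}$ as $T' = B_n(T)$ with $T\in\tirabo{\wt\nu}$, using that $B_n$ is a bijection. Substituting into the previous identity and applying the sign-preserving property $\signo{B_n(T)}=\signo{T}$ yields
\[
\cara{\,\ol\nu(n)} = \sum_{T\in \tirabo{\wt\nu}} \signo{T}\,\permu{\tau(B_n(T))},
\]
which is the desired formula.

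I do not anticipate any real obstacle here: the statement is essentially just a translation of equation~\eqref{ecua:jttres} through the bijection $B_n$, and all needed combinatorial input has already been established in Lemma~\ref{lema:biyec-tiraborde}. The only minor point to verify is that $\tau(B_n(T))$ is a legitimate composition indexing a permutation character, which is automatic because $\tau(B_n(T)) = \tau(T) + (n-d-\nu_2, 0, \dots, 0)$ still has positive first entry (since $\tau(T)$ does and $n-d-\nu_2 \ge 0$), and by \eqref{ecua:reorder} the character $\permu{\tau(B_n(T))}$ is well defined regardless of whether $\tau(B_n(T))$ is already arranged as a partition.
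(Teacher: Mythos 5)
Your proof is correct and is exactly the argument the paper intends: the corollary is stated without proof precisely because it follows immediately by applying equation~\eqref{ecua:jttres} to $\ol\nu(n)$ and reindexing the sum through the sign-preserving bijection $B_n$ of Lemma~\ref{lema:biyec-tiraborde}. Your closing remark that $\tau(B_n(T))$ is a genuine composition (so $\permu{\tau(B_n(T))}$ makes sense even when it is not a partition, by~\eqref{ecua:reorder}) is a correct and harmless extra verification.
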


This stability is illustrated by the following

\begin{ejem}
{\em
Let $\ol\nu = (1,1)$.
Then the elements in $\tirabo{\wt\nu}$ are
\[
\begin{array}{cccc}
\begin{picture}(30,70)(-5,-5)
\drawline (0,0)  (20,0)
\drawline (0,20) (20,20)
\drawline (0,40) (20,40)
\drawline (0,60) (20,60)

\drawline (0,0)   (0,60)
\drawline (20,0)  (20,60)

\put (10,10){\circle*{2}}
\drawline (10,10)  (10,50)
\end{picture}

&\quad

\begin{picture}(30,70)(-5,-5)
\drawline (0,0)  (20,0)
\drawline (0,20) (20,20)
\drawline (0,40) (20,40)
\drawline (0,60) (20,60)

\drawline (0,0)   (0,60)
\drawline (20,0)  (20,60)

\put (10,10){\circle*{2}}
\put (10,50){\circle*{2}}

\drawline (10,10)  (10,30)
\end{picture}

&\quad

\begin{picture}(30,70)(-5,-5)
\drawline (0,0)  (20,0)
\drawline (0,20) (20,20)
\drawline (0,40) (20,40)
\drawline (0,60) (20,60)

\drawline (0,0)   (0,60)
\drawline (20,0)  (20,60)

\put (10,10){\circle*{2}}
\put (10,30){\circle*{2}}

\drawline (10,30)  (10,50)
\end{picture}

&\quad

\begin{picture}(30,70)(-5,-5)
\drawline (0,0)  (20,0)
\drawline (0,20) (20,20)
\drawline (0,40) (20,40)
\drawline (0,60) (20,60)

\drawline (0,0)   (0,60)
\drawline (20,0)  (20,60)

\put (10,10){\circle*{2}}
\put (10,30){\circle*{2}}
\put (10,50){\circle*{2}}
\end{picture}
\end{array}
\]
and the corresponding elements in $\tirabo {\ol\nu(4)}$ under $B_n$ are
\[
\begin{array}{cccc}
\begin{picture}(50,70)(-5,-5)
\drawline (0,0)  (20,0)
\drawline (0,20) (20,20)
\drawline (0,40) (40,40)
\drawline (0,60) (40,60)

\drawline (0,0)   (0,60)
\drawline (20,0)  (20,60)
\drawline (40,40) (40,60)

\put (10,10){\circle*{2}}

\drawline (10,10) (10,50)
\drawline (10,50) (30,50)
\end{picture}

&\quad

\begin{picture}(50,70)(-5,-5)
\drawline (0,0)  (20,0)
\drawline (0,20) (20,20)
\drawline (0,40) (40,40)
\drawline (0,60) (40,60)

\drawline (0,0)   (0,60)
\drawline (20,0)  (20,60)
\drawline (40,40) (40,60)

\put (10,10){\circle*{2}}
\put (10,50){\circle*{2}}

\drawline (10,10)  (10,30)
\drawline (10,50) (30,50)
\end{picture}

&\quad

\begin{picture}(50,70)(-5,-5)
\drawline (0,0)  (20,0)
\drawline (0,20) (20,20)
\drawline (0,40) (40,40)
\drawline (0,60) (40,60)

\drawline (0,0)   (0,60)
\drawline (20,0)  (20,60)
\drawline (40,40) (40,60)

\put (10,10){\circle*{2}}
\put (10,30){\circle*{2}}

\drawline (10,30)  (10,50)
\drawline (10,50) (30,50)
\end{picture}

&\quad

\begin{picture}(50,70)(-5,-5)
\drawline (0,0)  (20,0)
\drawline (0,20) (20,20)
\drawline (0,40) (40,40)
\drawline (0,60) (40,60)

\drawline (0,0)   (0,60)
\drawline (20,0)  (20,60)
\drawline (40,40) (40,60)

\put (10,10){\circle*{2}}
\put (10,30){\circle*{2}}
\put (10,50){\circle*{2}}

\drawline (10,50) (30,50)
\end{picture}
\end{array}
\]
Thus
\begin{align*}
\cara{\wt\nu} = \cara{\nu(3)}
&= \permu{(3)} - \permu{(1,2)} - \permu{(2,1)} + \permu{(1,1,1)}
=  \permu{(3)} - 2\permu{(2,1)} + \permu{(1,1,1)}\\
\intertext{and}
\cara{\ol\nu(4)}
&=  \permu{(4)} - \permu{(2,2)} - \permu{(3,1)} + \permu{(2,1,1)}.
\end{align*}
}
\end{ejem}

We will need the following well-known result.
A proof of which can be found in~\cite{mac}.

\begin{lema} \label{lema:skew-standar}
Let $\alpha$, $\la$ be partitions such that $\alpha\subseteq \la$.
Then,

(1) the degree $\cara{\la/\alpha}(1)$ of the skew character
$\cara{\la/\alpha}$ is the number $f^{\la/\alpha}$ of
standard Young tableaux of shape $\la/\alpha$;

(2) $\cara{\la/\alpha} = \sum_{\mu \vdash |\la/\alpha|} c^{\la/\alpha}_\mu \cara\mu$.
\end{lema}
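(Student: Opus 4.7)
The plan is to pull both statements back through the Frobenius characteristic isomorphism $\mathrm{ch} \colon R(\sime n) \to \Lambda^n$, which sends $\cara\mu$ to the Schur function $s_\mu$. Recall that the skew character $\cara{\la/\alpha}$ is \emph{defined} so that $\mathrm{ch}(\cara{\la/\alpha}) = s_{\la/\alpha}$, where $s_{\la/\alpha}$ is the skew Schur function. Both parts then reduce to well-known identities for skew Schur functions and for standard tableaux.

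For part~(2), I would use the Hall inner product on $\Lambda$, together with the adjointness identity $\bili{s_{\la/\alpha}}{s_\mu} = \bili{s_\la}{s_\alpha s_\mu}$ (which holds because multiplication by $s_\alpha$ is adjoint to the skewing operation $s_\la \mapsto s_{\la/\alpha}$ under the Hall pairing). Combined with the Littlewood-Richardson rule $s_\alpha s_\mu = \sum_\la c^\la_{\alpha\mu}\, s_\la$, this gives $\bili{s_{\la/\alpha}}{s_\mu} = c^\la_{\alpha\mu}$. Since by convention $c^{\la/\alpha}_\mu = c^\la_{\alpha\mu}$ whenever $|\mu| = |\la/\alpha|$, and since $\{s_\mu\}$ is an orthonormal basis, we obtain
\[
s_{\la/\alpha} = \sum_{\mu\vdash |\la/\alpha|} c^{\la/\alpha}_\mu\, s_\mu.
\]
Applying $\mathrm{ch}^{-1}$ yields (2).

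For part~(1), I would first use (2) to compute
\[
\cara{\la/\alpha}(1) = \sum_{\mu\vdash |\la/\alpha|} c^{\la/\alpha}_\mu\, \cara\mu(1) = \sum_{\mu\vdash |\la/\alpha|} c^{\la/\alpha}_\mu\, f^\mu,
\]
using the classical fact $\cara\mu(1) = f^\mu$. It then suffices to establish the combinatorial identity $f^{\la/\alpha} = \sum_\mu c^{\la/\alpha}_\mu\, f^\mu$. The cleanest way to see this is via jeu de taquin: every standard Young tableau $S$ of skew shape $\la/\alpha$ rectifies to a unique SYT of some straight shape $\mu$, and the fiber over a given straight-shape SYT is in bijection with the set of LR tableaux of shape $\la/\alpha$ and content $\mu$ (this is essentially Schützenberger's theorem on the jeu-de-taquin invariance of the LR correspondence).

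The only genuine step is verifying the adjointness of multiplication and skewing in the first paragraph; the rest is bookkeeping. Since this is a textbook result in the symmetric-function machinery, one may alternatively just quote \cite{mac} as the author does.
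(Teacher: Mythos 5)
The paper does not actually prove this lemma: it is stated as a well-known result with a citation to \cite{mac}, so there is no internal argument to compare yours against. Your proof is correct and is essentially the standard one that the citation points to. Part (2) via the adjointness $\langle s_{\la/\alpha}, s_\mu\rangle = \langle s_\la, s_\alpha s_\mu\rangle$ and the Littlewood--Richardson rule is exactly Macdonald's I.5 argument, and the identification $c^{\la/\alpha}_\mu = c^{\la}_{\alpha\mu}$ matches the paper's definition of $c^{\la/\alpha}_\mu$ as a count of LR tableaux of skew shape. For part (1), your route through (2) plus the jeu-de-taquin identity $f^{\la/\alpha} = \sum_\mu c^{\la/\alpha}_\mu f^\mu$ is valid, though it invokes a second nontrivial theorem (rectification fibers having constant size $c^{\la/\alpha}_\mu$); a slightly more economical alternative is to note that for any class function $\chi$ of $\sime n$ the degree $\chi(1)$ is $n!$ times the coefficient of $p_1^n/n!$ in $\mathrm{ch}(\chi)$, equivalently the coefficient of $x_1x_2\cdots x_n$ in the corresponding symmetric function, and the coefficient of $x_1x_2\cdots x_n$ in $s_{\la/\alpha}$ is by definition the number of standard Young tableaux of shape $\la/\alpha$, giving (1) directly without (2). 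Either way, nothing in your argument is gapped.
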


\begin{nota}
{\em
Let $\pi= \vector \pi r$ be a composition of $d$.
Denote the corresponding multinomial coefficient by
\[
\binom{d}{\pi} = \frac{d!}{\pi_1 ! \cdots \pi_r !}.
\]
}
\end{nota}

\begin{lema} \label{lema:kostka-multinom}
Let $\nu$ be a partition of $d$.
Then
\[
\sum_{\mu \vdash d} K^{(-1)}_{\mu,\nu} \binom{d}{\mu} = f^\nu.
\]
\end{lema}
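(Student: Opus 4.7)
The plan is to obtain this identity by evaluating a character equation at the identity element of $\sime d$. Specifically, I would start from the Jacobi--Trudi expansion~\eqref{ec:jt},
\[
\cara\nu = \sum_{\mu \vdash d} \kos\mu\nu^{(-1)}\, \permu\mu,
\]
which is an equation in the character ring of $\sime d$, and evaluate both sides on the identity permutation $1 \in \sime d$.

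On the left-hand side, $\cara\nu(1)$ is the degree of the irreducible character $\cara\nu$, which by Lemma~\ref{lema:skew-standar}(1), applied with $\alpha = \vacio$, equals $f^\nu$. On the right-hand side, each $\permu\mu$ is the character induced from the trivial representation of the Young subgroup $\sime\mu = \sime{\mu_1} \times \cdots \times \sime{\mu_r}$, so its degree is the index
\[
\permu\mu(1) = [\sime d : \sime\mu] = \frac{d!}{\mu_1!\cdots \mu_r!} = \binom{d}{\mu}.
\]
Substituting these two evaluations into the Jacobi--Trudi identity immediately yields
\[
f^\nu = \sum_{\mu \vdash d} \kos\mu\nu^{(-1)} \binom{d}{\mu},
\]
which is the claim.

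There is no real obstacle here; the only conceptual point is to notice that the identity in question is simply the degree equation obtained by specializing the character-level Jacobi--Trudi determinant. The reorder-invariance noted in~\eqref{ecua:reorder} ensures that writing $\mu$ as a partition rather than a composition is harmless, since $\binom{d}{\mu}$ depends only on the multiset of parts.
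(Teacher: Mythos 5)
Your proposal is correct and is essentially identical to the paper's own proof, which also obtains the identity by evaluating the Jacobi--Trudi expansion~\eqref{ec:jt} at the identity element, using $\permu\mu(1) = \binom{d}{\mu}$ and $\cara\nu(1) = f^\nu$. No issues.
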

\begin{proof}
Since $\permu \mu (1) = \binom{d}{\mu}$ and $\cara\nu (1) = f^\nu$
the claim follows from identity~\eqref{ec:jt}.
\end{proof}

A nice instance of the previous lemma is the following identity:

\begin{coro}
Let $d\in \natural$.
Then
\[
\sum_{\pi \vDash d} (-1)^{|\pi| - \longi \pi} \binom{d}{\pi} = 1.
\]
\end{coro}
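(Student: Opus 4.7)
The plan is to derive the corollary by specializing Lemma~\ref{lema:kostka-multinom} to the column partition $\nu=(1^d)$. Since $f^{(1^d)}=1$ (there is a unique standard Young tableau of a single column), the lemma reads
\[
\sum_{\mu\vdash d} \kosinv{\mu}{(1^d)}\binom{d}{\mu} = 1,
\]
so the task reduces to rewriting the left-hand side as a sum over compositions with the prescribed sign.

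First, I would evaluate $\kosinv{\mu}{(1^d)}$ using Theorem~\ref{teor:er}. A special border strip tableau of shape $(1^d)$ is a chain $\varnothing=\mu(0)\subset\mu(1)\subset\cdots\subset\mu(c)=(1^d)$ of column partitions; writing $\mu(j)=(1^{d_j})$ with $0=d_0<d_1<\cdots<d_c=d$, such a chain is the same data as a composition $(e_1,\dots,e_c)\vDash d$ with $e_j=d_j-d_{j-1}$. Each strip $\mu(j)/\mu(j-1)$ is the vertical border strip of size $e_j$ (and hence height $e_j-1$), so the sign of the tableau is
\[
\signo{T} = \prod_{j=1}^{c}(-1)^{e_j-1} = (-1)^{d-c}.
\]
The content $\conte{T}$ is the sorted rearrangement of $(e_1,\dots,e_c)$. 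Therefore, if $N_\mu$ denotes the number of compositions of $d$ whose sorted rearrangement is $\mu$, then
\[
\kosinv{\mu}{(1^d)} = N_\mu\,(-1)^{d-\longi{\mu}}.
\]

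Substituting this into the specialized Lemma~\ref{lema:kostka-multinom}, and using that $\binom{d}{\pi}$ depends only on the multiset of parts of $\pi$, the sum over partitions unpacks into a sum over compositions:
\[
1 = \sum_{\mu\vdash d} N_\mu\,(-1)^{d-\longi\mu}\binom{d}{\mu}
   = \sum_{\pi\vDash d}(-1)^{d-\longi\pi}\binom{d}{\pi}.
\]
Since $|\pi|=d$ for every $\pi\vDash d$, the exponent $d-\longi\pi$ equals $|\pi|-\longi\pi$, yielding the stated identity.

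The only non-routine step is the combinatorial identification of special border strip tableaux of shape $(1^d)$ with compositions of $d$ and the resulting sign computation; everything else is a direct specialization. No subtle obstacle is anticipated, since the shape $(1^d)$ trivializes the connectivity and ``special'' conditions on border strips.
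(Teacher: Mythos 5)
Your proposal is correct and follows essentially the same route as the paper: both specialize Lemma~\ref{lema:kostka-multinom} to $\nu=(1^d)$ and identify the special border strip tableaux of shape $(1^d)$ with compositions of $d$, each carrying sign $(-1)^{d-\ell}$. The only difference is cosmetic — you make the intermediate value $\kosinv{\mu}{(1^d)} = N_\mu(-1)^{d-\longi\mu}$ explicit before regrouping over compositions, whereas the paper passes directly from the bijection to the sum over compositions.
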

\begin{proof}
Let $T\in \tirabo{(1^d)}$.
Order the special border strips $\zeta_1, \dots, \zeta_l$ of $T$ from
top to bottom and let $\pi_i$ be the length of $\zeta_i$.
Then $\pi(T)= \vector \pi l$ is a composition of $d$ and $\signo T = (-1)^{d - l}$.
The correspondence $T \asocia \pi(T)$ is a bijection between $\tirabo{(1^d)}$
and the set of compositions of $d$.
The claim follows now from Lemma~\ref{lema:kostka-multinom} when $\nu = (1^d)$.
\end{proof}

There are three kinds of products of characters we need to deal with.

\begin{producto}
{\em
Let $\varphi$, $\psi$ be characters of $\sime m$, $\sime n$, respectively.
Then

(1) $\varphi \times \psi$ denotes the character of $\sime m \times
\sime n$ given by $(\varphi \times \psi) (\sigma, \tau) =
\varphi(\sigma) \psi(\tau)$, for all $\sigma\in \sime m$,
$\tau \in \sime n$;

(2) $\varphi \bullet \psi = {\rm Ind}_{\sime m\times \sime n}
^{\sime{m+n}} (\varphi \times \psi)$; and

(3) if $m=n$, the Kronecker product of $\varphi$ with $\psi$, that is,
the character of $\sime m$ defined by
$(\varphi \otimes \psi) (\sigma) = \varphi(\sigma) \psi(\sigma)$, for all
$\sigma\in\sime m$.
}
\end{producto}

\begin{obse}
{\em
It is possible to state our results in the language of symmetric functions.
This can be done with the usual dictionary: $\cara\la$ corresponds to the
Schur function $s_\la$, the permutation character $\permu\la$ to the complete
homogeneous symmetric function $h_\la$, the product $\bullet$ to the usual product
of symmetric functions, the Kronecker product $\otimes$ to the inner product
of symmetric functions $*$, and the scalar product of characters $\bilipunto$
to the scalar product of symmetric functions.
}
\end{obse}

\section{Littlewood-Richardson multitableaux} \label{sec:multita}

In this section we introduce certain pairs of LR multitableaux which permit us
to work with the RT method in a more systematic way.

Let $\pi = \vector{\pi}r$ be a composition of $n$ and,
for each $i \in \nume r$, let $\rho(i)$ be a partition of $\pi_i$.
Then, by the Littlewood-Richardson rule we have
\[
\cara{\rho(1)} \bullet \cdots \bullet \cara{\rho(r)} =
\sum_{\la\vdash n} \clirir \la \cara\la.
\]
Therefore
$\clirir \la = \langle \cara\la ,
\cara{\rho(1)} \bullet \cdots \bullet \cara{\rho(r)} \rangle$
for all $\la\vdash n$.
More generally, if $\la$, $\alpha$ are partitions,
$\alpha \subseteq \la$ and $|\la/\alpha| =n$, we have by~\cite[2.4.16]{jake} or
by~\cite[I,5]{mac}.
\begin{align} \label{ecua:sesgaliri}
\clirir \sesgala =
c^\la_{(\alpha, \rho(1), \dots, \rho(r))} & =
\left<\cara\la , \cara\alpha \bullet
\cara{\rho(1)} \bullet \cdots \bullet \cara{\rho(r)} \right>
\notag \\
&= \left< \cara{\la/\alpha} ,
\cara{\rho(1)} \bullet \cdots \bullet \cara{\rho(r)} \right>.
\end{align}

\begin{defi} \label{defi:lr}
{\em
Let $\alpha$, $\beta$, $\la$, $\mu$ be partitions such that $\alpha \subseteq \la$,
$\beta \subseteq \mu$ and $|\la/\alpha| = |\mu/\beta| = n$, and let $\pi$ be a composition
of $n$.
Then we define
\begin{equation*}
{\sf lr}(\la/\alpha, \mu/\beta; \pi)
= \sum_{\rosec} \clirir{\la/\alpha} \clirir{\mu/\beta}.
\end{equation*}
This is the number of pairs $(S,T)$ of LR multitableaux of shape
$(\la/\alpha,\mu/\beta)$, same content and type $\pi$.
In other words, $S=\vector Sr$ is an LR multitableau of shape
$\la/\alpha$, $T=\vector Tr$ is an LR multitableau of shape
$\mu/\beta$ and both $S_i$ and $T_i$ have the same content $\rho(i)$ for
some partition $\rho(i)$ of $\pi_i$, for $i\in \nume r$.
}
\end{defi}

The following lemma is a consequence of Frobenius reciprocity and the
Littlewood-Richardson rule.

\begin{lema} \label{lema:lr}
Let $\la$, $\mu$, $\alpha$, $\beta$, $\pi$ be as above.
Then
\[
{\sf lr}(\la/\alpha, \mu/\beta; \pi)
= \langle \cara{\la/\alpha}\otimes \cara{\mu/\beta}, \permu\pi \rangle.
\]
\end{lema}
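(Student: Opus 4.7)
The plan is to chain together Frobenius reciprocity, the compatibility of restriction with the Kronecker product, and the Littlewood--Richardson rule in the form of \eqref{ecua:sesgaliri}.

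First I would write $\permu\pi = {\rm Ind}_{\sime\pi}^{\sime n}(1_\pi)$ and apply Frobenius reciprocity to obtain
\[
\langle \cara{\la/\alpha}\otimes\cara{\mu/\beta},\, \permu\pi \rangle
 = \langle {\rm Res}_{\sime\pi}^{\sime n}\bigl(\cara{\la/\alpha}\otimes\cara{\mu/\beta}\bigr),\, 1_\pi \rangle.
\]
Since the Kronecker product is pointwise multiplication of character values, restriction commutes with $\otimes$, so the right-hand side equals
\[
\langle {\rm Res}_{\sime\pi}^{\sime n}\cara{\la/\alpha} \otimes {\rm Res}_{\sime\pi}^{\sime n}\cara{\mu/\beta},\, 1_\pi \rangle
 = \langle {\rm Res}_{\sime\pi}^{\sime n}\cara{\la/\alpha},\, {\rm Res}_{\sime\pi}^{\sime n}\cara{\mu/\beta} \rangle,
\]
where the last equality uses the elementary fact that $\langle \varphi\otimes\psi, 1\rangle = \langle \varphi, \psi\rangle$ for real-valued characters $\varphi,\psi$ of a finite group.

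Second, I would decompose each restricted skew character using \eqref{ecua:sesgaliri}. That identity gives
\[
c^{\la/\alpha}_{\ro1r} = \langle \cara{\la/\alpha}, \cara{\rho(1)}\bullet\cdots\bullet\cara{\rho(r)}\rangle,
\]
and one more application of Frobenius reciprocity rewrites the right-hand side as $\langle {\rm Res}_{\sime\pi}^{\sime n}\cara{\la/\alpha},\, \cara{\rho(1)}\times\cdots\times\cara{\rho(r)} \rangle$. As $(\rho(1),\dots,\rho(r))$ ranges over all tuples with $\rho(i)\vdash \pi_i$, the external products $\cara{\rho(1)}\times\cdots\times\cara{\rho(r)}$ exhaust the irreducible characters of $\sime\pi$, so
\[
{\rm Res}_{\sime\pi}^{\sime n}\cara{\la/\alpha} = \sum_{\rosec} c^{\la/\alpha}_{\ro1r}\, \cara{\rho(1)}\times\cdots\times\cara{\rho(r)},
\]
and analogously for $\mu/\beta$.

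Finally, orthonormality of the irreducible characters of $\sime\pi$ collapses the inner product displayed above to
\[
\sum_{\rosec} c^{\la/\alpha}_{\ro1r}\, c^{\mu/\beta}_{\ro1r},
\]
which is ${\sf lr}(\la/\alpha,\mu/\beta;\pi)$ by Definition~\ref{defi:lr}. There is no substantive obstacle in this argument; the only point that requires attention is the reduction of a pairing of a Kronecker product against $1_\pi$ to an ordinary inner product, which is valid because the irreducible characters of the symmetric group are integer valued.
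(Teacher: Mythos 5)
Your proof is correct and follows exactly the route the paper indicates for this lemma: Frobenius reciprocity combined with the Littlewood--Richardson rule in the form of equation~\eqref{ecua:sesgaliri}, with the decomposition of the restricted skew characters over $\sime\pi$ and orthonormality doing the bookkeeping. The paper gives no further detail than "consequence of Frobenius reciprocity and the Littlewood--Richardson rule," so your write-up is a faithful (and complete) expansion of that same argument.
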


Then we have, by equation~\eqref{ecua:reorder}, the following

\begin{coro}
If $\rho$ is obtained by reordering the coordinates of $\pi$, then
\[
{\sf lr}(\la/\alpha, \mu/\beta; \rho) =
{\sf lr}(\la/\alpha, \mu/\beta; \pi)\, .
\]
\end{coro}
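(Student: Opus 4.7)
The plan is to apply Lemma~\ref{lema:lr} to both sides of the claimed equality and then invoke the identity~\eqref{ecua:reorder}. Explicitly, Lemma~\ref{lema:lr} gives
\[
{\sf lr}(\la/\alpha, \mu/\beta; \pi)
= \langle \cara{\la/\alpha}\otimes \cara{\mu/\beta}, \permu\pi \rangle,
\]
and the analogous formula with $\rho$ in place of $\pi$. Since $\rho$ is obtained from $\pi$ by permuting its entries, the Young subgroups $\sime\pi$ and $\sime\rho$ coincide up to relabeling of the factors, so the induced permutation characters agree: $\permu\pi = \permu\rho$, which is exactly~\eqref{ecua:reorder}. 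Substituting this into the right-hand side yields the claim.

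There is essentially no obstacle here — the corollary is a two-line consequence of Lemma~\ref{lema:lr} together with the elementary invariance of $\permu\pi$ under reordering of $\pi$. If one prefers a more direct combinatorial argument avoiding the character-theoretic detour, one may instead work from the definition
\[
{\sf lr}(\la/\alpha, \mu/\beta; \pi)
= \sum_{\rosec} \clirir{\la/\alpha} \clirir{\mu/\beta}
\]
and use that $\clirir{\la/\alpha} = c^\la_{(\alpha,\rho(1),\dots,\rho(r))}$ is symmetric in the arguments $\rho(1),\dots,\rho(r)$ — a symmetry that follows from~\eqref{ecua:sesgaliri} and the commutativity of the induction product~$\bullet$. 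A permutation of the parts of $\pi$ then induces a bijection on the indexing set of the sum that preserves each summand, and the only point to verify in this alternative route is precisely the symmetry of $c^\la_{(\alpha,\rho(1),\dots,\rho(r))}$ in its lower indices.
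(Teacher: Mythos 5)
Your proof is correct and is exactly the paper's argument: the corollary is stated there as an immediate consequence of Lemma~\ref{lema:lr} together with equation~\eqref{ecua:reorder}. (The paper also remarks that a purely combinatorial proof follows from tableau switching, which is in the spirit of your alternative sketch, though your version of it still routes the symmetry of $c^\la_{(\alpha,\rho(1),\dots,\rho(r))}$ through the character identity~\eqref{ecua:sesgaliri}.)
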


A combinatorial proof of this corollary follows from the tableau switching
properties studied in~\cite{bss}.

\begin{lema} \label{lema:itera}
Let $\la$, $\mu$ be partitions of $n$ and $\pi =\vector{\pi}r$ be a
composition of $n$.
Then
\[
\multiliri \pi = \sum_{\alpha \vdash \pi_1}
{\sf lr}(\la/\alpha, \mu/\alpha; \overline\pi).
\]
\end{lema}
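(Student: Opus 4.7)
The plan is to prove the identity by a direct bijection, essentially unwinding Definition~\ref{defi:lr}. The right-hand side partitions the count on the left according to the common value of the ``first step'' of the chains defining the two LR multitableaux.

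First I would recall that an element counted by $\multiliri\pi$ is a pair $(S,T)$ where $S=(S_1,\dots,S_r)$ is an LR multitableau of shape $\la$ associated with a chain $\varnothing=\la(0)\subset\la(1)\subset\cdots\subset\la(r)=\la$, with $S_i$ an LR tableau of shape $\la(i)/\la(i-1)$ and content $\rho(i)\vdash\pi_i$; and similarly $T=(T_1,\dots,T_r)$ comes from a chain $\varnothing=\mu(0)\subset\mu(1)\subset\cdots\subset\mu(r)=\mu$ with $T_i$ of shape $\mu(i)/\mu(i-1)$ and content $\rho(i)$.

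The crucial observation is the following: since $\la(0)=\varnothing$, the tableau $S_1$ is an LR tableau of the \emph{straight} shape $\la(1)$ whose content is the partition $\rho(1)\vdash\pi_1$. For a straight-shape LR tableau the content equals the shape, and the tableau is the unique semistandard filling in which row $i$ contains only the entry $i$. Hence $\la(1)=\rho(1)$ and $S_1$ is determined by $\rho(1)$; the same applies to $T_1$, so $\mu(1)=\rho(1)$ as well. In particular $\la(1)=\mu(1)$, and I would set $\alpha:=\la(1)=\mu(1)\vdash\pi_1$.

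With $\alpha$ fixed, the remaining data $(S_2,\dots,S_r)$ is, by definition, an LR multitableau of shape $\la/\alpha$ associated with the chain $\alpha=\la(1)\subset\cdots\subset\la(r)=\la$, with contents $\rho(2),\dots,\rho(r)$; and $(T_2,\dots,T_r)$ is an LR multitableau of shape $\mu/\alpha$ with the same contents. The pair $\bigl((S_2,\dots,S_r),(T_2,\dots,T_r)\bigr)$ thus has type $\ol\pi=(\pi_2,\dots,\pi_r)$ and common content $(\rho(2),\dots,\rho(r))$, i.e.\ it is counted by ${\sf lr}(\la/\alpha,\mu/\alpha;\ol\pi)$. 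Conversely, given $\alpha\vdash\pi_1$ and such a pair, one recovers $(S,T)$ by prepending the unique straight-shape LR tableau of shape and content $\alpha$. This is a bijection, so summing over $\alpha\vdash\pi_1$ yields the claimed equality.

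There is no real obstacle here: the argument is purely a definitional unfolding, and the only point to be careful about is the identification of straight-shape LR tableaux with their content partition, which forces the common value $\alpha$ that parameterizes the sum on the right. (Alternatively, one could derive the identity from Lemma~\ref{lema:lr} together with $\permu\pi=\cara{(\pi_1)}\bullet\permu{\ol\pi}$ and the decomposition $\cara{\la/\alpha}=\sum_\alpha c^\la_{\alpha,\cdot}\cdots$ coming from~\eqref{ecua:sesgaliri}, but the bijective proof is shorter and more transparent.)
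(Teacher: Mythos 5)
Your proof is correct and is essentially the paper's own argument: the paper deduces the identity from equation~\eqref{ecua:lirisesgado} (setting $\alpha=\rho(1)$) together with Definition~\ref{defi:lr}, and that equation is justified there by exactly the fact you isolate, namely that there is a unique LR tableau of straight shape $\alpha$ and content $\alpha$. Your bijective phrasing just unpacks that one-line algebraic manipulation at the level of the chains of partitions, so the two proofs coincide in substance.
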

\begin{proof}
Since, by definition,
\begin{equation*}
\multiliri \pi = \sum_{\rho(1)\vdash \pi_1, \dots, \rho(r) \vdash \pi_r}
\clirir {\la}\clirir {\mu}
\end{equation*}
the claim follows from \eqref{ecua:lirisesgado} letting $\alpha = \rho(1)$
and then again by Definiton~\ref{defi:lr}.
\end{proof}

\begin{coro}
Let $\la$, $\mu$ be partitions of $n$ and $\pi =\vector{\pi}r$ be a
composition of $n$.
If $|\la\cap\mu| < \pi_1$, then $\multiliri \pi = 0$.
\end{coro}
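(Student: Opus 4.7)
The plan is to deduce this immediately from Lemma~\ref{lema:itera}, which expresses
\[
\multiliri \pi = \sum_{\alpha \vdash \pi_1} {\sf lr}(\la/\alpha, \mu/\alpha; \overline\pi),
\]
so it suffices to show that each summand vanishes when $|\la\cap\mu| < \pi_1$.

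First I would observe that the expression ${\sf lr}(\la/\alpha, \mu/\alpha; \overline\pi)$ counts pairs of LR multitableaux of skew shapes $\la/\alpha$ and $\mu/\alpha$, so for it to be nonzero one needs both $\alpha \subseteq \la$ and $\alpha \subseteq \mu$ (otherwise one of the ``skew shapes'' fails to be a genuine skew diagram and the number of LR tableaux is $0$ by convention; equivalently, the coefficient $c^\la_{(\alpha,\rho(1),\dots,\rho(r))}$ in~\eqref{ecua:lirisesgado} forces $\alpha\subseteq\la$). Using the set-theoretic interpretation of partitions as diagrams recalled in Section~\ref{sec:parti-tableaux}, the two containments $\alpha\subseteq\la$ and $\alpha\subseteq\mu$ are together equivalent to $\alpha \subseteq \la\cap\mu$.

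Next I would use the hypothesis: since $\alpha\vdash \pi_1$ we have $|\alpha|=\pi_1$, so $\alpha\subseteq \la\cap\mu$ would force $|\la\cap\mu|\ge \pi_1$, contradicting the assumption $|\la\cap\mu| < \pi_1$. Hence no partition $\alpha$ of $\pi_1$ can be contained in $\la\cap\mu$, so every summand in the formula from Lemma~\ref{lema:itera} is $0$, and therefore $\multiliri \pi = 0$.

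This is essentially a one-line consequence of the previous lemma once the ``containment in the intersection'' observation is made, so I do not expect any real obstacle; the only point to be careful about is the convention that ${\sf lr}(\la/\alpha,\mu/\alpha;\overline\pi) = 0$ whenever $\alpha\not\subseteq\la$ or $\alpha\not\subseteq\mu$, which is consistent with identity~\eqref{ecua:lirisesgado}.
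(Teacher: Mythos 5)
Your proof is correct and is exactly the argument the paper intends: the corollary is stated without proof as an immediate consequence of Lemma~\ref{lema:itera}, and your observation that a nonzero summand forces $\alpha\subseteq\la\cap\mu$ with $|\alpha|=\pi_1$ is precisely the missing one line. The remark on the convention ${\sf lr}(\la/\alpha,\mu/\alpha;\ol\pi)=0$ when $\alpha\not\subseteq\la$ or $\alpha\not\subseteq\mu$ is the right point to be careful about and is consistent with identity~\eqref{ecua:lirisesgado}.
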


Now we put to use the numbers $\multiliri \pi$ and show that the Kronecker coefficient
${\sf g}(\la,\mu, (n-d, \ol\nu))$ can be computed, up to signs, in a purely
combinatorial manner, that depends only on $\la$, $\mu$ and $\ol\nu$.

\begin{prop} \label{prop:robtau}
Let $\ol\nu = (\nu_2, \dots, \nu_r)$ be a partition of $d$ and $n \ge d + \nu_2$.
Then, for any partitions $\la$, $\mu$ of $n$ we have
\begin{equation*}
{\sf g}(\la,\mu, (n-d,\ol\nu)) =
\sum_{T\in \tirabo{\wt\nu}} {\sf sign}(T)\, \multiliri {\tau(B_n(T))} .
\end{equation*}
\end{prop}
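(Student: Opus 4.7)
The plan is to combine the stable expansion of the irreducible character from Corollary~\ref{coro:cara-estab} with the Frobenius-reciprocity identity of Lemma~\ref{lema:lr}; the proof will essentially be a one-line bookkeeping once those two ingredients are in place.

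First I would write the Kronecker coefficient as a scalar product,
\[
{\sf g}(\la,\mu,(n-d,\ol\nu)) = \bili{\cara\la \otimes \cara\mu}{\cara{\,\ol\nu(n)}},
\]
which is just the definition in \eqref{ecua:kron}. Next, since $n \ge d + \nu_2$, Corollary~\ref{coro:cara-estab} applies and expresses $\cara{\,\ol\nu(n)}$ as a signed sum of permutation characters indexed by $T \in \tirabo{\wt\nu}$, with the permutation character corresponding to the composition $\tau(B_n(T))$ and signed by $\signo T$. Substituting this expansion and using linearity of the scalar product in its second argument yields
\[
{\sf g}(\la,\mu,(n-d,\ol\nu)) = \sum_{T\in \tirabo{\wt\nu}} \signo T \, \bili{\cara\la \otimes \cara\mu}{\permu{\tau(B_n(T))}}.
\]

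The final step is to interpret each inner product combinatorially. Taking the skew shapes to be full shapes ($\alpha = \beta = \varnothing$, so $\cara{\la/\varnothing} = \cara\la$ and similarly for $\mu$) in Lemma~\ref{lema:lr} gives
\[
\bili{\cara\la \otimes \cara\mu}{\permu{\tau(B_n(T))}} = \multiliri{\tau(B_n(T))},
\]
and substitution yields the claimed formula.

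The only mildly subtle point I would flag explicitly is that $\tau(B_n(T))$ is defined as a composition, not always a partition (the first entry is the size of the border strip containing the top-right corner, which need not be the largest part). But Lemma~\ref{lema:lr} is stated for arbitrary compositions $\pi$, and moreover the corollary following it (based on \eqref{ecua:reorder}) tells us that $\multiliri{\cdot}$ depends only on the underlying multiset of parts. So no reordering argument is required and the application of Lemma~\ref{lema:lr} is direct. There is no real obstacle here; the substance lies in the preparatory results (Corollary~\ref{coro:cara-estab} and Lemma~\ref{lema:lr}), and the proposition is their immediate consequence.
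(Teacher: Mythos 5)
Your proposal is correct and follows exactly the paper's own argument, which cites precisely the same three ingredients: equation~\eqref{ecua:kron}, Corollary~\ref{coro:cara-estab}, and Lemma~\ref{lema:lr}. Your additional remark that $\tau(B_n(T))$ need only be a composition, and that Lemma~\ref{lema:lr} already covers this case, is a useful clarification that the paper leaves implicit.
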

\begin{proof}
This follows from equation~\eqref{ecua:kron}, Corollary~\ref{coro:cara-estab}
and Lemma~\ref{lema:lr}.
\end{proof}

\begin{obse}
{\em
This formula gives a more systematic way of looking at the RT method.
Computationally there is no improvement.
However, from a theoretical point of view, this combinatorial
description of $\coefi$ is useful, as we will show below.
A similar result can be found in~\cite{don}.
Donin expresses there the coefficient $\coefi$ as the determinant of a matrix
with entries of the form $\multiliri \pi$ for some $\pi$'s.
In our case, the explicit use of special rim hook tableaux $T$ and the choice
of their contents $\tau(B_n(T))$ will have some advantages.
}
\end{obse}

Let $\la = \vector \la p \vdash n$,
let $a$, $b$ be integers such that $1\le a < b\le p$, and let the vector
$\mu = \vector \mu p$ be defined by
\begin{equation} \label{ec:cubre}
\mu_i=
\begin{cases}
\la_i,   & \text{if $ i\neq a,b$;} \\
\la_a+1, & \text{if $i=a$;} \\
\la_b-1, & \text{if $i=b$.}
\end{cases}
\end{equation}
Assume that $\mu$ is a partition.
Then $\mu \domina \la$.
Let $\la_{a,b}=(\la_a,\la_b)$ and let $\widehat\la_{a,b}$ be obtained from $\la$ by
deleting $\la_a$ and $\la_b$.
The following result is contained in the proof of the
main theorem of~\cite{livi}.

\begin{lema} \label{lema:lv}
Let $\la$ and $\mu$ be as above.
Then
\begin{equation*}
\permu\la - \permu{\mu} = \cara{\la_{a,b}} \bullet \permu{\widehat\la_{a,b}}.
\end{equation*}
\end{lema}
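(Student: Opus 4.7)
The plan is to exploit transitivity of induction to reduce the claim to a two-row identity, then recognize that two-row identity as the Jacobi-Trudi determinant.

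First I would observe that since $\mu$ differs from $\la$ only in positions $a$ and $b$, we have $\widehat\mu_{a,b} = \widehat\la_{a,b}$ and $|\mu_{a,b}| = \mu_a + \mu_b = \la_a + \la_b = |\la_{a,b}|$. This means that the two Young subgroups $\sime\la$ and $\sime\mu$ share a common overgroup
\[
K = \sime{\la_a + \la_b} \times \sime{\widehat\la_{a,b}} \subseteq \sime n,
\]
in which $\sime\la$ and $\sime\mu$ sit as $\sime{\la_a}\times\sime{\la_b}\times\sime{\widehat\la_{a,b}}$ and $\sime{\la_a+1}\times\sime{\la_b-1}\times\sime{\widehat\la_{a,b}}$, respectively.

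Next, by transitivity of induction and the definition of $\bullet$,
\[
\permu\la = {\rm Ind}_K^{\sime n}\bigl({\rm Ind}_{\sime{\la_a}\times\sime{\la_b}}^{\sime{\la_a+\la_b}}(1)\times 1\bigr) = \permu{\la_{a,b}} \bullet \permu{\widehat\la_{a,b}},
\]
and exactly the same argument (combined with equation~\eqref{ecua:reorder} to handle the fact that the entries $\la_a+1, \la_b-1$ may not appear in the sorted order within $\mu$) gives
\[
\permu\mu = \permu{(\la_a+1,\la_b-1)} \bullet \permu{\widehat\la_{a,b}}.
\]
Subtracting these and pulling out the common factor on the right yields
\[
\permu\la - \permu\mu = \bigl(\permu{\la_{a,b}} - \permu{(\la_a+1,\la_b-1)}\bigr) \bullet \permu{\widehat\la_{a,b}}.
\]

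It then remains to prove the two-row identity $\permu{\la_{a,b}} - \permu{(\la_a+1,\la_b-1)} = \cara{\la_{a,b}}$. This is the $2\times 2$ Jacobi-Trudi determinant, but it also follows directly from Young's rule: for any two-row shape $(p,q)$ with $p\ge q$, the Kostka number $\kos{(p+i,q-i)}{(p,q)}$ equals $1$ for $0\le i\le q$ and vanishes otherwise, so
\[
\permu{(p,q)} = \sum_{i=0}^{q} \cara{(p+i,q-i)}.
\]
Applying this to both $(p,q)=(\la_a,\la_b)$ and $(p,q)=(\la_a+1,\la_b-1)$ and taking the difference, the sum telescopes and leaves only the single term $\cara{\la_{a,b}}$, as required.

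There is no serious obstacle here; the only point that requires a little care is the bookkeeping in step~2, where one must use the reordering invariance~\eqref{ecua:reorder} of $\permu\mu$ to match the decomposition against $\widehat\la_{a,b}$ regardless of where $\la_a+1$ and $\la_b-1$ land in the sorted tuple $\mu$.
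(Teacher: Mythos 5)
Your proof is correct and follows essentially the same route as the paper: both arguments factor $\permu\la-\permu\mu$ as $\bigl(\permu{\la_{a,b}}-\permu{(\mu_a,\mu_b)}\bigr)\bullet\permu{\widehat\la_{a,b}}$ and then invoke the two-row identity $\cara{\la_{a,b}}=\permu{\la_{a,b}}-\permu{(\mu_a,\mu_b)}$. The only cosmetic difference is that the paper obtains this identity directly from its special-border-strip form of the Jacobi--Trudi expansion (equation~\eqref{ec:jtdos}), whereas you rederive it from Young's rule by a telescoping sum, which is a perfectly valid equivalent.
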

\begin{proof}
It follows from equation~\eqref{ec:jtdos} that
$\cara{\la_{a,b}} = \permu{\la_{a,b}} - \permu{(\mu_a, \mu_b)}$.
Therefore
\begin{equation*}
\cara{\la_{a,b}} \bullet \permu{\widehat\la_{a,b}} =
\permu{\la_{a,b}} \bullet \permu{\widehat\la_{a,b}} - \permu{(\mu_a, \mu_b)} \bullet \permu{\widehat\la_{a,b}}
= \permu\la - \permu{\mu}.
\end{equation*}
The lemma is proved.
\end{proof}

Let ${\mathscr P}_n$ denote the lattice of partitions of $n$ with the dominance order.

\begin{teor} \label{teor:dec}
Let $\chi$ be the character of a complex representation of ${\sf S}_n$.
Then the map $\fun {F_\chi} {{\mathscr P}_n} \natural$
defined by $F_\chi (\la)= \langle \chi, \permu\la \rangle$ is
weakly decreasing.
\end{teor}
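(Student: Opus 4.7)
The plan is to combine Lemma~\ref{lema:lv} with the observation that the scalar product of two characters of honest (non-virtual) representations is a non-negative integer.

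First, I would reduce the inequality $F_\chi(\la) \ge F_\chi(\mu)$ (for $\la \dominada \mu$) to the case of a single elementary move as in~\eqref{ec:cubre}. Concretely, I would establish the following lattice-theoretic fact: for any $\la, \mu \in \mathscr{P}_n$ with $\la \dominada \mu$ there exists a chain $\la = \la^{(0)} \dominada \la^{(1)} \dominada \cdots \dominada \la^{(k)} = \mu$ of partitions of $n$ such that each $\la^{(i+1)}$ is obtained from $\la^{(i)}$ by the move~\eqref{ec:cubre} for some pair of indices $a_i < b_i$. Setting $d_s := \sum_{j\le s}(\mu_j - \la_j) \ge 0$, one can take $a$ to be the smallest index with $d_a \ge 1$ and $b$ to be the smallest index $> a$ with $d_b = 0$; a short check shows that the $\la'$ produced from $\la$ by the recipe~\eqref{ec:cubre} at $(a,b)$ is again a partition, satisfies $\la \dominada \la' \dominada \mu$, and has strictly smaller total defect $\sum_s d_s$, so an induction closes the chain.

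Given such a chain, Lemma~\ref{lema:lv} gives at each step
\[
\permu{\la^{(i)}} - \permu{\la^{(i+1)}} = \cara{\la^{(i)}_{a_i,b_i}} \bullet \permu{\widehat{\la^{(i)}}_{a_i,b_i}},
\]
and the right-hand side is the character of a genuine representation of $\sime n$, being the induction from a Young subgroup of the external product of the irreducible character $\cara{\la^{(i)}_{a_i,b_i}}$ and the permutation character $\permu{\widehat{\la^{(i)}}_{a_i,b_i}}$. Since $\chi$ is also a character of an actual representation by hypothesis, pairing with it yields a non-negative integer
\[
F_\chi(\la^{(i)}) - F_\chi(\la^{(i+1)}) = \left\langle \chi,\ \cara{\la^{(i)}_{a_i,b_i}} \bullet \permu{\widehat{\la^{(i)}}_{a_i,b_i}} \right\rangle \in \noneg,
\]
and telescoping along the chain gives $F_\chi(\la) \ge F_\chi(\mu)$, which is exactly the assertion that $F_\chi$ is weakly decreasing.

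The only delicate step is the chain construction: one must verify that the $\la'$ produced by the defect-based choice of $(a,b)$ is genuinely a partition, which reduces to a short comparison of parts of $\la$ and $\mu$ at the indices $a, a+1, b-1, b$ (using, in particular, that the pair $d_{b-1} \ge 1$, $d_b = 0$ forces $\la_b = \mu_b + 1$, and that by minimality of $a$ one has $\mu_{a-1} = \la_{a-1} \ge \mu_a \ge \la_a + 1$). Once this combinatorial fact is in place, the theorem is immediate from Lemma~\ref{lema:lv} and non-negativity of multiplicities in an ordinary representation.
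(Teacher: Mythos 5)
Your proof is correct and follows essentially the same route as the paper: both arguments reduce the inequality to a single application of the move~\eqref{ec:cubre} and then conclude from Lemma~\ref{lema:lv} together with the non-negativity of $\langle \chi, \cara{\la_{a,b}} \bullet \permu{\widehat\la_{a,b}} \rangle$ when $\chi$ is the character of an actual representation. The only difference is that the paper reduces to covering relations and cites the known classification of covers in the dominance order (\cite[Thm.~1.4.10]{jake}, \cite{livi}) as moves of the form~\eqref{ec:cubre}, whereas you build an explicit chain of such moves via the partial-sum defect argument; your verification of that standard combinatorial lemma is sound, so both proofs are complete.
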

\begin{proof}
It is enough to prove that if $\mu$ covers
$\la$ in ${\mathscr P}_n$, then $F_\chi(\la)\ge F_\chi(\mu)$.
But it is known~\cite[Thm. 1.4.10]{jake}, \cite{livi} that if $\mu$
covers $\la$, then $\mu$ satisfies~\eqref{ec:cubre}.
Therefore by Lemma~\ref{lema:lv} $F_\chi(\la)-F_\chi(\mu) =
\langle \chi, \cara{\la_{a,b}}\bullet \permu{\widehat\la_{a,b}} \rangle \ge0$.
\end{proof}
The following theorem has not been published before.
It appeared for the first time in Corollary~4.3 in~\cite{vpp} when
$\alpha$ and $\beta$ are the empty partition,
and in the present form in Corollary~4.9 in~\cite{vsqkron}.

\begin{teor} \label{teor:coefi-dec}
Let $\la$, $\mu$, $\alpha$, $\beta$ be partitions such that
$\alpha\subseteq\la$, $\beta\subseteq\mu$ and both
$\la/\alpha$ and $\mu/\beta$ have size $n$.
If $\rho$, $\sigma$ are partitions of $n$ and $\rho \dominada \sigma$,
then
\begin{equation*}
{\sf lr}(\la/\alpha, \mu/\beta;\rho) \ge {\sf lr}(\la/\alpha, \mu/\beta; \sigma).
\end{equation*}
\end{teor}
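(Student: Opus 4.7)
The plan is to derive this as a direct consequence of Theorem~\ref{teor:dec} applied to a carefully chosen character. First I would rewrite the quantity ${\sf lr}(\la/\alpha, \mu/\beta; \pi)$ using Lemma~\ref{lema:lr} as the scalar product
\[
{\sf lr}(\la/\alpha, \mu/\beta; \pi) = \langle \cara{\la/\alpha}\otimes \cara{\mu/\beta}, \permu\pi \rangle,
\]
which immediately identifies the function $\pi \mapsto {\sf lr}(\la/\alpha, \mu/\beta; \pi)$ with the function $F_\chi$ from Theorem~\ref{teor:dec}, where $\chi = \cara{\la/\alpha}\otimes \cara{\mu/\beta}$.

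The next step, and the only place where care is needed, is to check that $\chi$ is indeed a genuine character of a complex representation of $\sime n$, not merely a virtual character; this is the hypothesis required to apply Theorem~\ref{teor:dec}. By Lemma~\ref{lema:skew-standar}(2), each skew character $\cara{\la/\alpha}$ expands as a non-negative integer combination $\sum_\tau c^{\la/\alpha}_\tau \cara\tau$ of irreducible characters, hence affords an honest complex representation. The Kronecker product $\cara{\la/\alpha}\otimes \cara{\mu/\beta}$ then corresponds to the tensor product representation, so $\chi$ is again a character of a complex representation.

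Once this is established, Theorem~\ref{teor:dec} applies to $\chi$, telling us that $F_\chi$ is weakly decreasing on $\mathscr{P}_n$ with respect to the dominance order. Therefore the assumption $\rho \dominada \sigma$ yields $F_\chi(\rho) \ge F_\chi(\sigma)$, which is exactly the desired inequality. The main obstacle is purely conceptual rather than technical: one must recognize that the combinatorial quantity ${\sf lr}(\la/\alpha, \mu/\beta; \pi)$ can be interpreted as the multiplicity function $F_\chi$ for a bona fide character $\chi$ so that the monotonicity result of Theorem~\ref{teor:dec} becomes applicable; after that, no further computation is required.
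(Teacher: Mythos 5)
Your proposal is correct and follows exactly the paper's own argument: the paper proves this theorem by combining Lemma~\ref{lema:lr} with Theorem~\ref{teor:dec}, just as you do. The only addition you make is to verify explicitly that $\cara{\la/\alpha}\otimes\cara{\mu/\beta}$ is a genuine (not merely virtual) character, a point the paper leaves implicit but which your justification via Lemma~\ref{lema:skew-standar}(2) handles correctly.
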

\begin{proof}
It follows from Theorem~\ref{teor:dec} and Lemma~\ref{lema:lr}.
\end{proof}

\section{Diagram classes} \label{sec:equiv}

In this section we introduce the notions of $\la$-removable diagrams and
collages of diagrams.
The first one will be useful in the computation of
multiplicities of components in Kronecker squares.
It is also very useful in the study of the Saxl conjecture (see Section~\ref{sec:saxl}).
The main result in this section (Theorem~\ref{teor:elbueno}) gives a recursive method to compute, for each
isomorphism class of skew diagrams $D$, a polynomial $p_D(x_C)$ with rational
coefficients in the variables $x_C$, where $C$ runs over the set of isomorphism classes
of connected skew diagrams $C$ of size at most $|D|$.
If $D$ is the class of a non-connected diagram, there is an evaluation of $p_D(x_C)$
which expresses, for all partitions $\la$, the number of $\la$-removable diagrams
in the class $D$ in terms of the numbers of $\la$-removable connected diagrams
of smaller size.
In some cases we compute this polynomials explicitly.

\begin{defi} \label{defi:diagramas}
{\em
Let $\rho$, $\sigma$ be skew diagrams.
Denote by $\rho = \cup_{i\in \nume m} \rho_i$, $\sigma = \cup_{j\in \nume n} \sigma_j$
their decompositions into connected components.
Then $\rho$ and $\sigma$ are called \emph{isomorphic diagrams} if $m=n$ and there is a
permutation $\pi \in \sime m$ such that, for each $i\in \nume m$, there is a bijective
translation $\fun{f_i}{\rho_i}{\sigma_{\pi(i)}}$.
The map $\fun f\rho\sigma$ defined, for each $x\in \rho_i$, by $f_i(x)$ is called
an \emph{isomorphism of diagrams}.
Clearly $f^{-1}$ is also an isomorphism of diagrams.
For example, if $\la =(3,2,1)$, $\alpha=(1,1)$, $\mu=(5,3,3,2)$ and $\beta=(4,3,1,1)$,
then $\sesgala$ is isomorphic to $\sesgamb$.
An isomorphism $\fun f \sesgala \sesgamb$ is defined by $f(i,j)= (i+2,j)$ for
$i=1$, $2$, and $f(3,1)=(1,5)$.
Note that the second coordinates of the squares of $\mu/\beta$ are either 2, 3 or 5.

\begin{center}
\begin{picture} (80,40)(-40,0)
\put (-40,17){$\sesgala =$}
\drawline (0,5) (10,5)
\drawline (0,15) (20,15)
\drawline (10,25) (30,25)
\drawline (10,35) (30,35)
\drawline (0,5) (0,15)
\drawline (10,5) (10,35)
\drawline (20,15) (20,35)
\drawline (30,25) (30,35)
\end{picture}
\quad
\begin{picture} (80,40)(-40,0)
\put (-40,17){$\mu/\beta =$}
\drawline (10,0) (20,0)
\drawline (10,10) (30,10)
\drawline (10,20) (30,20)
\drawline (40,30) (50,30)
\drawline (40,40) (50,40)
\drawline (10,0) (10,20)
\drawline (20,0) (20,20)
\drawline (30,10) (30,20)
\drawline (40,30) (40,40)
\drawline (50,30) (50,40)
\end{picture}
\end{center}
Therefore, if $\fun f \rho \sigma$ is an isomorphism of diagrams, then
$\rho$ is connected, respectively, a border strip if and only if $\sigma$ is
connected, respectively, a border strip.
}
\end{defi}

\begin{young} \label{para:young}
{\em
Consider the partial order defined on $\planod$ by
$(a,b) \menori (c,d)$ if and only if $a\le c$ and $b \le d$ and
denote by $\fun {p_k} {\planod} \natural$ the $k$-th projection, $k=1,2$.
Let $\sigma$ be a nonempty connected skew diagram and
let $a_\sigma = \min\{ p_1(x) \mid x \in \sigma \}$ and
$b_\sigma = \min\{ p_2(x) \mid x \in \sigma \}$.
Then $a_\sigma$ is the index of the highest row in $\sigma$ and $b_\sigma$
is the index of the leftmost column of $\sigma$.
The \emph{Young hull} of $\sigma$ is the skew diagram defined as
\begin{equation*}
y(\sigma) = \{ x \in \planod \mid x \mayori (a_\sigma,b_\sigma) \text{ and }
x \menori z \text{ for some } z\in \sigma \}.
\end{equation*}
Then $y(\sigma)$ is the translation of some partition diagram.
For example

\begin{center}
\begin{picture} (87,40)(-40,0)
\put (-27,17){$\sigma =$}
\drawline (0,0) (30,0)
\drawline (0,10) (50,10)
\drawline (10,20) (50,20)
\drawline (40,30) (60,30)
\drawline (40,40) (60,40)
\drawline (0,0) (0,10)
\drawline (10,0) (10,20)
\drawline (20,0) (20,20)
\drawline (30,0) (30,20)
\drawline (40,10) (40,40)
\drawline (50,10) (50,40)
\drawline (60,30) (60,40)
\end{picture}
\begin{picture} (60,40)(0,0)
\put (26,17){and}
\end{picture}
\begin{picture} (102,40)(-40,0)
\put (-42,17){$y(\sigma) = $}
\drawline (0,0) (30,0)
\drawline (0,10) (50,10)
\drawline (0,20) (50,20)
\drawline (0,30) (60,30)
\drawline (0,40) (60,40)
\drawline (0,0) (0,40)
\drawline (10,0) (10,40)
\drawline (20,0) (20,40)
\drawline (30,0) (30,40)
\drawline (40,10) (40,40)
\drawline (50,10) (50,40)
\drawline (60,30) (60,40)
\end{picture}
\end{center}
For each $i\in \entero$, let $D_i(\sigma) =\{ (a,b) \in \sigma \mid b-a =i \}$ be the
$i$-th \emph{diagonal} of $\sigma$.
For each $i \in \entero$ such that $D_i(\sigma) \neq \varnothing$, let $x_i$ be the square
in $D_i(\sigma)$ whose coordinate sum is bigger than the coordinate sums of all other squares
in $D_i(\sigma)$.
Since $\sigma$ is connected, the set $b(\sigma)$ of all such $x_i$'s is a border strip.
We call $b(\sigma)$ the \emph{principal border strip} of $\sigma$.
Below we show with dots the principal border strip of a skew diagram.

\begin{center}
\begin{picture} (50,40)(0,0)
\drawline (0,0) (30,0)
\drawline (0,10) (40,10)
\drawline (0,20) (40,20)
\drawline (10,30) (50,30)
\drawline (30,40) (50,40)
\drawline (0,0) (0,20)
\drawline (10,0) (10,30)
\drawline (20,0) (20,30)
\drawline (30,0) (30,40)
\drawline (40,10) (40,40)
\drawline (50,30) (50,40)
\put (5,5){\circle*{3}}
\put (15,5){\circle*{3}}
\put (25,5){\circle*{3}}
\put (25,15){\circle*{3}}
\put (35,15){\circle*{3}}
\put (35,25){\circle*{3}}
\put (35,35){\circle*{3}}
\put (45,35){\circle*{3}}
\end{picture}
\end{center}
We also define $y(\vacio) = \vacio$ and $b(\vacio) = \vacio$.
Clearly, if $\zeta$ is a border strip, then $b(y(\zeta)) =\zeta$, and if
$\sigma$ is a connected skew diagram such that $\zeta \subseteq \sigma \subseteq y(\zeta)$,
then $b(\sigma)= \zeta$ and $y(\sigma) = y(\zeta)$.
}
\end{young}

\begin{defi}
{\em
Let $\la = \vector \la p$, $\mu = \vector \mu q$, $\alpha = \vector \alpha a$ and $\beta = \vector \beta b$
be partitions such that $\alpha \subseteq \la$ and $\beta \subseteq \mu$.
If $a<p$ or $b < q$ we let $\alpha_{p+1}, \dots, \alpha_p$ and
$\beta_{b+1}, \cdots,  \beta_q $ be zero.
The \emph{disjoint union} $\sesgala \sqcup\mu/\beta$ of $\sesgala$ and $\mu/\beta$
is the skew diagram $\nu/\gamma$ where
\begin{equation*}
\nu = (\la_1 + \mu_1, \dots, \la_1 + \mu_q, \la_1, \dots, \la_p)
\text{ and }
\gamma = (\la_1 + \beta_1, \dots, \la_1 + \beta_q, \alpha_1, \dots, \alpha_p).
\end{equation*}
For example, if $\la = (3,2,1)$, $\alpha = (1^2)$, $\mu = (4,2)$ and $\beta = (1)$, then
the disjoint union $\sesgala \sqcup\mu/\beta$ is
\begin{center}
\begin{picture} (145,50)(-75,0)
\put (-75,27){$ \sesgala \sqcup\mu/\beta = $}
\drawline (0,0) (10,0)
\drawline (0,10) (20,10)
\drawline (10,20) (30,20)
\drawline (10,30) (50,30)
\drawline (30,40) (70,40)
\drawline (40,50) (70,50)
%
\drawline (0,0) (0,10)
\drawline (10,0) (10,30)
\drawline (20,10) (20,30)
\drawline (30,20) (30,40)
\drawline (40,30) (40,50)
\drawline (50,30) (50,50)
\drawline (60,40) (60,50)
\drawline (70,40) (70,50)
\end{picture}
\end{center}
}
\end{defi}

We will work with isomorphism classes of diagrams.
For this we need the following definitions:

\begin{clasediag} \label{para:clase-diag}
{\em
If $\sigma$ is a skew diagram, its \emph{diagram class} $\clase \sigma$ is the set of
all skew diagrams isomorphic to $\sigma$.
Let $D = \clase \sesgala$.
The \emph{size} of $D$ is $|D| = |\sesgala|$ and the \emph{conjugate}
$D^{\,\prime}$ of $D$ is the diagram class $\clase{\la^{\,\prime} /\alpha^{\,\prime}}$.
If $E=\clase\sesgamb$ is another diagram class, denote
$D \sqcup  E = \clase {\sesgala \sqcup \sesgamb }$.
Note that $D \sqcup  E = E \sqcup D$.
We say that $D$ is \emph{connected} if $\sesgala$ is connected.
If $D=\clase \sigma$ is connected, then $B(D) = \clase {b(\sigma)}$ is the
\emph{principal border strip} of $D$.
A diagram class $C$ is a {\em subclass} of $D = \clase \sesgala$,
denoted by $C \subcla D$, if there is a partition $\gamma$
such that $\alpha \subseteq \gamma \subseteq \la$ and
$\clase{\la/\gamma} = C$.
For example, $\tabladh$\, is a subclass of $\,\raisebox{.65ex}{\tablatud}\,$,
but not of $\,\raisebox{.65ex}{\tablatdu}\,$.
}
\end{clasediag}

We introduce now the notion of $\la$-removable diagram, which will be fundamental
in the rest of the paper.

\begin{removible} \label{def:removible}
{\em
Let $\la$ be a partition.
A skew diagram $\rho$ is $\la$-\emph{removable} if there is a partition
$\alpha\subseteq \la$ such that $\rho = \la/\alpha$.
Define the set of $\la$-\emph{removable diagrams} in a diagram class $D$ by
\begin{equation*}
\Diagra D =\{\, \rho \subseteq \la \mid \rho
\text{ is $\la$-removable and }  \clase\rho = D \,\},
\end{equation*}
and let $\diagra D = \# \Diagra D$, be the number of elements in
$\Diagra D$.
For example, $\diagra \varnothing =1$ and $\diagra{\tablau}$ is the number
of removable squares of $\la$ (see~\cite[p.~202]{bekl}).
Removable squares are also called \emph{corners} of the diagram
(see, for example, ~\cite[p.~16]{pak}).

Conjugation of skew diagrams defines a bijection between the sets
$\Diagra{D^{\,\prime}}$ and ${\sf R}_{\la^\prime}(D)$, therefore
\begin{equation}\label{ecua:conjugada}
\diagra{D^{\,\prime}}={\sf r}_{\la^\prime}(D).
\end{equation}
}
\end{removible}

\begin{ejem}
{\em
Let $\la=(4,3,1)$, then $\diagra {\,\tablau\,} =3$,
$\diagra{\, \tablau \sqcup \tabladh\, }  =2$,
$\diagra{\,\raisebox{.65ex}{\tablatdu}\,} = 1$ and
$\diagra{\, \tablau \sqcup \raisebox{.65ex}{\tabladv}\,} = 0$.
}
\end{ejem}

The proofs of the following two lemmas are straightforward.

\begin{lema} \label{lema:redu-tirabo}
Let $\la$ be a partition and $C$ be a connected diagram class.
Then
\begin{equation*}
\diagra C = \diagra {B(C)}.
\end{equation*}
\end{lema}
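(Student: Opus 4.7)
The plan is to construct a bijection $\fun{\Phi}{\Diagra{C}}{\Diagra{B(C)}}$ by $\rho \mapsto b(\rho)$. Everything hinges on a simple geometric fact: for a nonempty connected skew diagram $\sigma=\mu/\delta$, the rightmost square $(i,\mu_i)$ of each nonempty row $i$ is SE-most on its diagonal, since $(i+1,\mu_i+1)$ lies outside $\sigma$ because $\mu_{i+1}\le \mu_i$; hence it lies in $b(\sigma)$. So $\sigma$ and $b(\sigma)$ share the same right boundary.

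For the forward direction, given $\rho=\la/\alpha \in \Diagra C$, the class equality $[b(\rho)]=B(C)$ is automatic from Paragraph~\ref{para:young}. To see that $b(\rho)$ is $\la$-removable, I would use that the intersection of $\rho$ with each diagonal is contiguous (a consequence of $\rho$ being a connected skew shape), so $(i,j)\in b(\rho)$ iff $(i,j)\in \rho$ and $(i+1,j+1)\notin \rho$. A direct computation then shows that row $i$ of $b(\rho)$ is the suffix of row $i$ of $\la$ from column $\max(\alpha_i+1,\la_{i+1})$ to $\la_i$, so $\la\setminus b(\rho)$ has row lengths $\beta_i=\max(\alpha_i,\la_{i+1}-1)$, which is manifestly non-increasing.

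For the inverse, fix a representative $\sigma_0=\mu/\delta$ of $C$ and set $\zeta_0=b(\sigma_0)$. Given $\zeta\in\Diagra{B(C)}$, let $\tau$ be the unique translation of the plane carrying $\zeta_0$ to $\zeta$, and define $\Psi(\zeta)=\tau(\sigma_0)$. The main verification is $\Psi(\zeta)\in\Diagra C$. The $\la$-removability of $\zeta=\la/\beta$ forces the rightmost square of $\zeta$ in each occupied row $a+t$ to sit at column $\la_{a+t}$; by the shared right-boundary property from paragraph one, the rightmost of $\Psi(\zeta)$ in row $a+t$ is also at column $\la_{a+t}$. Hence $\Psi(\zeta)\subseteq\la$ and $\alpha=\la\setminus\Psi(\zeta)$ has $\alpha_{a+t}=\delta_{t+1}+c$ in the occupied rows (for a common column shift $c$) and $\alpha_i=\la_i$ outside; that $\alpha$ is a partition follows from $\delta$ being one on the occupied rows, with the inequalities at the two boundary rows of $\zeta$ inherited from those already satisfied by $\beta$.

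That $\Phi$ and $\Psi$ are mutual inverses is then immediate, since both are governed by the same translation $\tau$: $b(\Psi(\zeta))=\tau(b(\sigma_0))=\zeta$, and symmetrically $\Psi(b(\rho))=\rho$ by uniqueness of the matching translation. The main obstacle is the transfer of $\la$-removability from $\zeta$ to $\Psi(\zeta)$ at the top and bottom rows of $\zeta$; this is precisely where the shared right-boundary observation becomes essential, since it reduces the required column inequalities for $\alpha$ to those already known for the partition $\beta$.
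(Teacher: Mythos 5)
Your proposal is correct. The paper in fact gives no argument for this lemma (it is dismissed as ``straightforward''), and the bijection $\rho \mapsto b(\rho)$ you construct is surely the intended one: the computation showing that $b(\la/\alpha)=\la/\beta$ with $\beta_i=\max(\alpha_i,\la_{i+1}-1)$, which is visibly a partition contained in $\la$, is exactly the point that makes the forward map land in $\Diagra{B(C)}$, and your inverse via the unique translation carrying a fixed representative's principal border strip onto a given $\la$-removable border strip is sound, with the only delicate step --- transferring the two boundary inequalities for $\alpha$ from those of $\beta$ --- correctly identified and justified by the shared right-boundary observation (together with the fact that the bottom rows of $\sigma_0$ and $b(\sigma_0)$ coincide).
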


\begin{lema} \label{lema:ui-remueve}
Let $\la$ be a partition.
If $\{\rho_i\}_{i\in \nume n}$ is a finite family of $\la$-removable diagrams, then
$\cup_{i\in \nume n} \rho_i$ and $\cap_{i\in \nume n} \rho_i$ are $\la$-removable
diagrams.
\end{lema}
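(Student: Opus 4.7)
The plan is to reduce the statement to a simple closure property of partition diagrams under unions and intersections, and then apply De Morgan's laws inside the ambient diagram $\la$.

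First, for each $i\in\nume n$, invoke the definition of $\la$-removability to obtain a partition $\alpha_i\subseteq\la$ with $\rho_i = \la/\alpha_i$. The key preliminary observation I would establish is that if $\alpha$ and $\beta$ are partition diagrams viewed as subsets of $\planod$, then both $\alpha\cup\beta$ and $\alpha\cap\beta$ are again partition diagrams. This is a one-line check using the defining downward-closed property: if $(i,j)\in\alpha\cup\beta$ (respectively $(i,j)\in\alpha\cap\beta$) and $(i',j')\menori(i,j)$, then $(i',j')$ lies in whichever of $\alpha,\beta$ contained $(i,j)$, hence in the union (respectively, in both, hence in the intersection). By induction this extends to any finite family, so $\bigcap_{i\in\nume n}\alpha_i$ and $\bigcup_{i\in\nume n}\alpha_i$ are partitions, and both are contained in $\la$.

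Second, I apply De Morgan's laws in the ambient finite set $\la$. Since each $\alpha_i\subseteq\la$, the skew diagram $\rho_i=\la/\alpha_i$ is the set-theoretic complement of $\alpha_i$ in $\la$. Therefore
\begin{equation*}
\bigcup_{i\in\nume n}\rho_i \;=\; \bigcup_{i\in\nume n}(\la\setminus\alpha_i) \;=\; \la\setminus\bigcap_{i\in\nume n}\alpha_i \;=\; \la\big/\bigcap_{i\in\nume n}\alpha_i,
\end{equation*}
and analogously $\bigcap_{i\in\nume n}\rho_i = \la/\bigcup_{i\in\nume n}\alpha_i$. Since both $\bigcap_i\alpha_i$ and $\bigcup_i\alpha_i$ are partitions contained in $\la$ by the previous step, both displayed skew diagrams are $\la$-removable, which is exactly what is to be shown.

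There is no real obstacle here; the whole argument is essentially a bookkeeping exercise once one identifies partitions with their diagrams and treats them as subsets of $\planod$. The only point that deserves any care is verifying that the intersection and union of partition diagrams are themselves partition diagrams, but this is immediate from the downward-closure property that characterizes partition diagrams inside $\planod$ with respect to the order $\menori$ introduced in Paragraph~\ref{para:young}.
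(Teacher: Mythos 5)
Your proof is correct: writing each $\rho_i$ as $\la/\alpha_i$, noting that finite unions and intersections of partition diagrams (i.e.\ downward-closed subsets of $\planod$) are again partition diagrams, and applying De Morgan's laws inside $\la$ is exactly the ``straightforward'' argument the paper has in mind when it omits the proof of this lemma. Nothing is missing.
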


For the construction of the polynomial $p_D(x_C)$ in Theorem~\ref{teor:elbueno} we need
the following definition:

\begin{collage} \label{parra:collage}
{\em
Let $\la$ be a partition and $\sigma$ be a $\la$-removable diagram.
A \emph{collage} of $\sigma$ is a sequence $\vector \rho m$ of $\la$-removable
diagrams such that $\sigma = \cup_{i\in \nume m} \rho_i$, where
the union is not necessarily disjoint.
For any finite list $D_1, \dots, D_m$ of diagram classes define
the set of \emph{collages} of $\sigma$ assembled from $D_1, \dots, D_m$ by
\begin{equation*}
\Interd \sigma = \{\, \vector \rho m \in \Diagra {D_1} \times
\cdots \times \Diagra {D_m} \mid \cup_{i\in \nume m} \rho_i = \sigma \,\}
\end{equation*}
and
\begin{equation*}
\interd \sigma =\# \Interd \sigma.
\end{equation*}
Let $\vector \rho m$ be a collage of $\sigma$.
Denote $D  = \clase\sigma$ and $D_i = \clase{\rho_i}$.
Then we say that $D$ is a \emph{collage} of $D_1, \dots, D_m$.
In this case we have that $D_i \subcla D$ for each $i$ and
\begin{equation*}
|D| \le |D_1|+ \cdots + |D_m|.
\end{equation*}
}
\end{collage}

\begin{ejem} \label{ejem:lema-falla}
{\em
Let $\la =(4,3,2,1)$, $\alpha =(3,2,1)$.
Let $D_1 = D_2 = \raisebox{.65ex}{\tabla{ & \ \\ \ &  \\ }} = \tabladu$
and $D = D_1 \sqcup D_2 = \clase{\la/\alpha}$.
Then $\Diagra {D_1} = \{ \la/\beta \mid \beta\in S \}$, where
\begin{equation*}
S = \{ (3,3,2), (3,2,2,1), (3,3,1,1), (4,3,1), (4,2,2), (4,2,1,1)\},
\end{equation*}
and ${\sf C}_\la(D_1, D_2; \sesgala)$ is the set of pairs of skew
shapes $\{ (\la/\alpha_1, \la/\alpha_2) \mid (\alpha_1, \alpha_2) \in T \}$
where $T$ is the set formed by the following pairs
\begin{gather*}
[(3,3,2), (4,2,1,1)], [(3,2,2,1),(4,3,1)], [(3,3,1,1),(4,2,2)], \\
[(4,2,1,1), (3,3,2)], [(4,3,1),(3,2,2,1)], [(4,2,2),(3,3,1,1)].
\end{gather*}
Therefore ${\sf c}_\la(D_1, D_2; \sesgala) = 6$ .
}
\end{ejem}

We next show that the definition of $\interd \sigma$ depends only on the isomorphism
class of $\sigma$.

\begin{lema} \label{lema:indep}
Let $\alpha$, $\beta$, $\la$, $\mu$ be partitions such that
$\alpha\subseteq \la$ and $\beta\subseteq \mu$.
If $\sesgala$ and $\sesgamb$ are isomorphic diagrams, then for any diagram classes
$D_1, \dots, D_m$ one has
\begin{equation*}
\inter \la \sesgala = \inter \mu \sesgamb.
\end{equation*}
\end{lema}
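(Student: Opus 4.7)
The plan is to use the given isomorphism $f\colon \sesgala \to \sesgamb$ to define a map $F\colon \Inter\la\sesgala \to \Inter\mu\sesgamb$ by $F(\rho_1,\dots,\rho_m) = (f(\rho_1),\dots,f(\rho_m))$, and to show that $F$ is a bijection. Several points are free from the fact that $f$ is a bijection that acts by a translation on each connected component: the isomorphism class $\clase{f(\rho_i)}$ equals $\clase{\rho_i} = D_i$, and the union identity $\cup_{i} f(\rho_i) = f(\cup_{i} \rho_i) = f(\sesgala) = \sesgamb$ is automatic. The only nontrivial verification is that each $f(\rho_i)$ is itself $\mu$-removable, and this is where the real work will lie.

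The first step is to reformulate $\la$-removability intrinsically: I would prove that a subset $\rho \subseteq \sesgala$ is $\la$-removable if and only if it is an upper set of the poset $(\sesgala, \menori)$, that is, $(i,j) \in \rho$ and $(i',j') \in \sesgala$ with $(i,j) \menori (i',j')$ together force $(i',j') \in \rho$. Indeed, $\rho = \la/\gamma$ requires $\gamma = \la \setminus \rho$ to be a partition, which is the same as $\rho$ being upper-right closed inside $\la$; and because $\sesgala$ is itself upper-right closed in $\la$, closure in $\la$ agrees with closure in $\sesgala$ for subsets of $\sesgala$.

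The second step, and the crux, is the geometric lemma that in any skew diagram the relation $\menori$ is trivial across distinct connected components. If $(a,b),(c,d) \in \sesgala$ and $(a,b) \menori (c,d)$, then either $a=c$ or $b=d$ (so a common row or column segment joins them), or $a<c$ and $b<d$, in which case every $(i,j)$ in the rectangle $[a,c]\times[b,d]$ satisfies $\la_i \ge \la_c \ge d \ge j$ and $\alpha_i \le \alpha_a < b \le j$, so the entire rectangle is contained in $\sesgala$ and connects the two squares. Consequently, any bijection $\sesgala \to \sesgamb$ that permutes components and is a translation on each---$f$ in particular---is automatically an isomorphism of the posets $(\sesgala, \menori)$ and $(\sesgamb, \menori)$.

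Order isomorphisms preserve upper sets, so $f$ sends $\la$-removable subsets of $\sesgala$ to $\mu$-removable subsets of $\sesgamb$; this shows $F$ is well defined, and applying the same argument to $f^{-1}$ supplies its two-sided inverse, giving the desired equality $\inter\la\sesgala = \inter\mu\sesgamb$. The conceptual obstacle I had to overcome was that $\la$-removability is \emph{a priori} tied to the ambient partition $\la$; the componentwise reduction furnished by cross-component incomparability turns it into an intrinsic property of the abstract skew diagram, which is exactly what allows it to be transported along $f$.
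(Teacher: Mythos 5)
Your proposal is correct and is essentially the paper's own argument: both define the map $(\rho_1,\dots,\rho_m) \mapsto (f(\rho_1),\dots,f(\rho_m))$, reduce everything to checking that $f(\rho_i)$ is $\mu$-removable, and establish this via the same key fact that two $\menori$-comparable squares of a skew diagram lie in a common connected component, on which $f$ is a translation and hence order-preserving. The only difference is presentational: you phrase removability as being an upper set of $(\sesgala,\menori)$ and transport it along a poset isomorphism, while the paper equivalently verifies that the complement $\beta\cup f(\alpha_i/\alpha)$ is a partition.
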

\begin{proof}
Let $\fun f \sesgala \sesgamb$ be an isomorphism.
We define a bijection
\begin{equation*}
\fun {\Phi_f} {\Inter \la \sesgala} {\Inter \mu \sesgamb}
\end{equation*}
by $\Phi_f\vector \rho m = (f(\rho_1), \dots, f(\rho_m))$.
We have to show that $f(\rho_i)\in {\sf R}_\mu(D_i)$, for each $i \in \nume m$.
Let $\alpha_i$ be a partition such that $\alpha_i \subseteq \la$ and $\la/\alpha_i = \rho_i$.
First we show that there is a partition  $\beta_i \subseteq \mu$ such that
$f(\rho_i) = \mu/\beta_i$.
Since $\rho_i \subseteq \sesgala$ we have that $\alpha\cap\rho_i = \vacio$
and $\alpha \contenido \alpha_i$.
Then, $\mu$ is the disjoint union $\beta \cup f(\alpha_i/\alpha) \cup f(\la/\alpha_i)$.
Let $\beta_i = \beta \cup f(\alpha_i/\alpha)$.
We have to show that $\beta_i$ is a partition.
Let $x$, $y\in \mu$ be such that $x \menori y$ (recall Paragraph~\ref{para:young}) and $y\in \beta_i$.
We claim $x\in \beta_i$.
If $x\notin \beta$, then $y \in f(\alpha_i/\alpha)$.
Since $x \menori y$, both elements are in a connected component $\mu/\delta$
of $\sesgamb$.
Since $\sesgala$ and $\sesgamb$ are isomorphic, there is a connected component
$\la/\gamma$ of $\sesgala$ such that the restriction
$\fun {f_|} {\la/\gamma} {\mu/\delta}$ is a translation.
Therefore $f^{-1}(x) \menori f^{-1}(y)$.
Thus $f^{-1}(x) \in \alpha_i$ and $x\in f(\alpha_i/\alpha) \subseteq \beta_i$.
We have proved that $\beta_i$ is a partition.
Therefore $f(\rho_i)$ is a $\mu$-removable diagram.
Since the restriction $\fun {f_|} {\rho_i} {f(\rho_i)}$ is an isomorphism of diagrams,
$f(\rho_i) \in {\sf R}_{\mu}(D_i)$.
The sequence $(f(\rho_i), \dots, f(\rho_m))$ is a collage of $\sesgamb$
because $f$ is a bijection.
Then $\Phi_f$ is a well defined map.
Its inverse is $\Phi_{f^{-1}}$.
This completes the proof of the lemma.
\end{proof}

By the previous lemma we can make the following

\begin{defi} \label{defi:inter-indep}
{\em
Let $D_1, \dots, D_m$, $D= \clase \sesgala$ be diagram classes.
Define
\begin{equation*}
\collaged D = \interd \sesgala.
\end{equation*}
This is the number of collages of $D$ assembled from $D_1, \dots, D_m$.
}
\end{defi}

\begin{ejem}
{\em
Let
\begin{equation*}
D_1 = \raisebox{1.3ex}{\tabla{ & \ & \  \\ \ & \ \\ \ \\}}\ , \qquad
D_2 = \raisebox{.65ex}\tablatdu \ , \qquad
D   = \raisebox{1.95ex}{\tabla{ & & \ & \ \\ & \ & \ \\ \ & \ \\ \ \\}}\ ,
\end{equation*}
then ${\sf c}( D_1, D_2; D) =2$ and ${\sf c}(D_2,D_2,D_2; D) = 6$.
}
\end{ejem}

\begin{sorted} \label{para:sorted}
{\em
Given a diagram class $E$ and a positive integer $n$ we denote by $E^{\,\sqcup \, n}$
the disjoint union $E \sqcup \cdots \, \sqcup E$ of $n$ copies of $E$.
Let $D$ be a nonempty diagram class and $D_1 \sqcup \cdots \sqcup D_m$ be a decomposition
of $D$ into its connected components.
A \emph{sorted decomposition} of $D$ is a reordering
$ C_1^{\,\sqcup\, a_1} \sqcup \cdots\, \sqcup C_k^{\,\sqcup \, a_k}$
of $D_1 \sqcup \cdots \sqcup D_m$ such that $C_1, \dots, C_m$ are pairwise distinct
classes.
Thus $a_1, \dots, a_k$ are positive and $a_1 + \cdots + a_k = m$.
}
\end{sorted}

\begin{lema} \label{lema:collage}
Let $D$ be a nonempty diagram class, $D_1 \sqcup \cdots \sqcup D_m$ be a decomposition
of $D$ into its connected components and
$C_1^{\,\sqcup\, a_1} \sqcup \cdots\, \sqcup C_k^{\,\sqcup \, a_k}$
be a sorted decomposition of $D$.
Then
\begin{equation*}
\collaged D = a_1 ! \cdots a_k!
\end{equation*}
\end{lema}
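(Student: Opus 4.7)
The plan is to fix a representative $\sigma \in D$ and match each collage $(\rho_1,\dots,\rho_m)\in\Interd\sigma$ to a class-preserving bijection between $\nume m$ and the connected components of $\sigma$. By Lemma~\ref{lema:indep} (and Definition~\ref{defi:inter-indep}) it suffices to compute $\interd\sigma$ for one chosen $\sigma = \la/\alpha$. Let $\sigma_1,\dots,\sigma_m$ be the connected components of $\sigma$; since $D = D_1\sqcup\cdots\sqcup D_m$ is the decomposition of $D$ into its connected components, the multiset $\{[\sigma_1],\dots,[\sigma_m]\}$ coincides with $\{D_1,\dots,D_m\}$.

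The key step is to show that every $(\rho_1,\dots,\rho_m)\in\Interd\sigma$ consists of the components of $\sigma$, reshuffled. The cardinality identity
\begin{equation*}
\sum_{i=1}^m |\rho_i| = \sum_{i=1}^m |D_i| = |D| = |\sigma| = \Bigl|\bigcup_{i=1}^m \rho_i\Bigr|
\end{equation*}
forces the $\rho_i$ to be pairwise disjoint. Each $\rho_i$ is connected (being in the connected class $D_i$), hence lies inside a unique component $\sigma_{\phi(i)}$; this defines $\fun\phi{\nume m}{\nume m}$. Non-emptiness of $\sigma_c = \bigcup_{\phi(i)=c}\rho_i$ makes $\phi$ surjective, therefore bijective, so every fibre is a singleton and $\rho_i=\sigma_{\phi(i)}$.

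To finish, the class condition $[\rho_i]=D_i$ becomes $D_{\phi(i)}=D_i$, so $\phi$ must permute indices within each block of the sorted decomposition $C_1^{\,\sqcup\, a_1}\sqcup\cdots\sqcup C_k^{\,\sqcup\, a_k}$; counting such bijections gives $a_1!\cdots a_k!$. Every such $\phi$ does yield a valid collage, since each connected component $\sigma_c$ of a $\la$-removable diagram is itself $\la$-removable. The main (and really only) subtle point is this last assertion: one checks that if $(i',j')\in\sigma_c$ and $(i,j)\in\la\setminus\sigma_c$ with $(i',j')\menori(i,j)$, then the row-and-column path from $(i',j')$ to $(i,j)$ must lie entirely in $\sigma$ (using that $\alpha$ is a partition), which would force $(i,j)\in\sigma_c$, a contradiction.
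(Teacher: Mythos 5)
Your proof is correct and follows essentially the same route as the paper: fix a representative via Lemma~\ref{lema:indep}, use the cardinality count to force the $\rho_i$ to be pairwise disjoint, use connectedness to identify each $\rho_i$ with a connected component of $\sigma$, and count the class-preserving permutations within blocks to get $a_1!\cdots a_k!$. The only difference is that you explicitly verify that each connected component of a $\la$-removable diagram is itself $\la$-removable (needed so that every block permutation really yields a collage), a point the paper's proof uses implicitly when it posits the $\gamma_{i,j}$; your verification of it is sound.
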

\begin{proof}
Let $D = \clase \sesgala$.
For each $i\in \nume k$ and $j\in \nume{a_i}$ let $\gamma_{i,j}$ be a $\la$-removable diagram such that
$\clase {\gamma_{i,j}} = C_i$ and
\begin{equation*}
\la/\alpha = \bigcup_{i \in \nume k} \bigcup_{j \in \nume {a_i}} \gamma_{i,j},
\end{equation*}
as a disjoint union.
The vector
\begin{equation*}
(\gamma_{1,1}, \dots, \gamma_{1,a_1}, \gamma_{2,1}, \dots, \gamma_{k,1},
\dots, \gamma_{k,a_k})
\end{equation*}
is in $\Interd \sesgala$.
This vector is divided in $k$ blocks of sizes $a_1, \dots, a_k$.
Any permutation of $\gamma_{i,j}$'s within a block yields a new element in $\Interd \sesgala$.
Therefore
\begin{equation*}
a_1 ! \cdots a_k! \le \interd \sesgala.
\end{equation*}
Let $ (\delta_{1,1}, \dots, \delta_{k,a_k})$ be an element in $\Interd \sesgala$.
Then $\clase{\delta_{i,j}} = C_i$ and
\begin{equation*}
\textstyle  \bigcup_{i \in \nume k} \bigcup_{j\in \nume {a_i}} \delta_{i,j} =
\sesgala.
\end{equation*}
Looking at the cardinalities of both sides one concludes that the union is disjoint.
Since the $C_i$'s are pairwise distinct and connected, for each $i\in \nume k$
there is a permutation $\sigma_i \in \sime {a_i}$ such that
$\delta_{i,j} = \gamma_{i, \sigma_i(j)}$.
Therefore $ a_1 ! \cdots a_k! \ge \interd {\la/\alpha}$.
The claim follows from Lemma~\ref{lema:indep} and Definition~\ref{defi:inter-indep}.
\end{proof}

Example~\ref{ejem:lema-falla} shows that the hypothesis of connectivity on the $C_i$'s cannot be
removed from Lemma~\ref{lema:collage}.

\begin{lema} \label{lema:producto}
Let $\la$ be a partition and let $D_1, \dots, D_m$ be diagram classes.
Then
\begin{equation*}
\diagra{D_1} \cdots \diagra{D_m} =
\sum_E \collaged E \, \diagra E,
\end{equation*}
where the sum runs over all diagram classes $E$ such that
$D_i \subcla E$, for each $i \in \nume m$, and $|E| \le |D_1|+ \cdots + |D_m|$.
\end{lema}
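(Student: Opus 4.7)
The plan is to interpret the left-hand side as counting tuples and then partition these tuples according to their union. Writing
\[
\diagra{D_1} \cdots \diagra{D_m} = \# \bigl( \Diagra{D_1} \times \cdots \times \Diagra{D_m} \bigr),
\]
I would assign to each tuple $(\rho_1, \dots, \rho_m)$ the skew diagram $\sigma = \rho_1 \cup \cdots \cup \rho_m$. By Lemma~\ref{lema:ui-remueve} this $\sigma$ is itself $\la$-removable, and by construction $(\rho_1, \dots, \rho_m) \in \Interd \sigma$.

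Next I would group the tuples first by $\sigma$ and then by the diagram class $E = \clase \sigma$:
\[
\diagra{D_1} \cdots \diagra{D_m}
= \sum_{\sigma} \interd \sigma
= \sum_{E} \ \sum_{\sigma \in \Diagra E} \interd \sigma.
\]
By Lemma~\ref{lema:indep} together with Definition~\ref{defi:inter-indep}, the inner summand $\interd \sigma$ depends only on $E$ and equals $\collaged E$, so the inner sum collapses to $\collaged E \cdot \diagra E$. This yields the desired identity with a sum over all diagram classes $E$.

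Finally I would verify that the restriction on $E$ in the statement costs nothing. If $(\rho_1, \dots, \rho_m)$ is a collage of some $\sigma = \la/\alpha$ with $\clase \sigma = E$, then each $\rho_i$ is itself of the form $\la/\gamma_i$ with $\alpha \subseteq \gamma_i \subseteq \la$, so $\clase{\rho_i} = D_i \subcla E$ by the definition in Paragraph~\ref{para:clase-diag}. Moreover, since $\sigma$ is the (not necessarily disjoint) union of the $\rho_i$, the size bound $|E| = |\sigma| \le |\rho_1| + \cdots + |\rho_m| = |D_1| + \cdots + |D_m|$ holds. For any class $E$ violating either condition, $\collaged E = 0$, so such terms can be dropped from the sum.

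No step looks like a serious obstacle: the argument is essentially bookkeeping once Lemma~\ref{lema:ui-remueve} provides that unions of $\la$-removable diagrams remain $\la$-removable and Lemma~\ref{lema:indep} tells us that $\interd \sigma$ is an invariant of the diagram class. The only point requiring mild care is recognizing that the subclass condition $D_i \subcla E$ follows from the set inclusion $\rho_i \subseteq \sigma$ via the partition chain $\alpha \subseteq \gamma_i \subseteq \la$.
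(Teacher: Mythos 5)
Your proposal is correct and follows essentially the same route as the paper: partition the product set $\Diagra{D_1}\times\cdots\times\Diagra{D_m}$ according to the union $\sigma$ of the entries, group the $\sigma$'s by diagram class $E$, and invoke Lemma~\ref{lema:indep} with Definition~\ref{defi:inter-indep} to collapse the inner sum to $\collaged E\,\diagra E$, with the restrictions on $E$ coming from the definition of collage. The only cosmetic difference is that the paper indexes the first decomposition by partitions $\alpha\subseteq\la$ rather than by the union $\sigma$ directly; the content is identical.
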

\begin{proof}
Note that
\begin{align*}
\Diagra{D_1} \times \cdots \times \Diagra{D_m} & =
\bigcup_{\alpha\subseteq\la} \Interd \sesgala \\
& = \bigcup_E \bigcup_{ \sigma \in \Diagra E} \Interd \sigma ,
\end{align*}
where $E$ runs over the set of diagram classes of $\la$-removable diagrams.
Since the unions are disjoint, the identity follows from Lemma~\ref{lema:indep}
and Definition~\ref{defi:inter-indep}.
If for some diagram class $E$ one has $\collaged E >0$, it follows from the
definition of collage (Paragraph~\ref{parra:collage}) that each $D_i$ is a subclass
of $E$ and that $|E| \le |D_1|+ \cdots + |D_m|$.
\end{proof}

The next proposition will be needed in the proof of Theorem~\ref{teor:elbueno}.
However, in some instances, the computation of $\diagra D$ is done more efficiently
by other means.
See for example Lemma~\ref{lema:remueve-snakes}.

\begin{prop} \label{prop:producto}
Let $\la$ be a partition, $D$ be a nonempty diagram class, $D_1 \sqcup \cdots \sqcup D_m$
be a decomposition of $D$ into its connected components and
$C_1^{\,\sqcup\, a_1} \sqcup \cdots\, \sqcup C_k^{\,\sqcup \, a_k}$
be a sorted decomposition of $D$.
Then
\begin{equation*}
\diagra D = \frac{1}{a_1! \cdots a_k !} \Biggl[
\diagra{C_1}^{a_1} \cdots \diagra{C_k}^{a_k} -
\sum_{\substack{ E\, \supercla C_1, \dots, C_k \\  |E| < |D|}}
\collaged E \,\diagra E \Biggr].
\end{equation*}
\end{prop}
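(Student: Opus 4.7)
The plan is to derive this proposition directly from Lemma~\ref{lema:producto} and Lemma~\ref{lema:collage} by isolating the leading term $E=D$ in the sum and solving for $\diagra D$.

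First I would apply Lemma~\ref{lema:producto} to the connected components $D_1,\dots,D_m$ of $D$:
\begin{equation*}
\diagra{D_1}\cdots\diagra{D_m} = \sum_{E}\collaged E\,\diagra E,
\end{equation*}
where the sum is over diagram classes $E$ with $D_i\subcla E$ for all $i$ and $|E|\le |D_1|+\cdots+|D_m|=|D|$. Using the sorted decomposition, the left-hand side is exactly $\diagra{C_1}^{a_1}\cdots\diagra{C_k}^{a_k}$, and the condition that every $D_i$ be a subclass of $E$ is equivalent to requiring every $C_j$ to be a subclass of $E$ (since the $C_j$ are just the distinct ones among the $D_i$).

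Next I would single out the summand $E=D$. By Lemma~\ref{lema:collage}, since the sorted decomposition exhibits $C_1,\dots,C_k$ as pairwise distinct connected classes with multiplicities $a_1,\dots,a_k$, we have $\collaged D = a_1!\cdots a_k!$. The key step is to show that $E=D$ is the only class on the right satisfying $|E|=|D|$; every other class $E$ occurring with $\collaged E>0$ must satisfy $|E|<|D|$. For this, suppose $E$ admits a collage $(\rho_1,\dots,\rho_m)$ assembled from $D_1,\dots,D_m$ with $|E|=|D|$. Since $E=\rho_1\cup\cdots\cup\rho_m$ and the cardinalities satisfy $|E|\le\sum_i|\rho_i|=\sum_i|D_i|=|D|$, equality forces the union to be disjoint. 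Thus $E=\rho_1\sqcup\cdots\sqcup\rho_m$ as skew diagrams, so its diagram class is $D_1\sqcup\cdots\sqcup D_m=D$.

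Having isolated this term, Lemma~\ref{lema:producto} becomes
\begin{equation*}
\diagra{C_1}^{a_1}\cdots\diagra{C_k}^{a_k}
= a_1!\cdots a_k!\,\diagra D
+ \!\!\sum_{\substack{E\supercla C_1,\dots,C_k \\ |E|<|D|}} \!\!\collaged E\,\diagra E,
\end{equation*}
and solving for $\diagra D$ yields the claimed formula. The only real obstacle is the disjointness argument in the equality case $|E|=|D|$; everything else is bookkeeping. Note that $a_1!\cdots a_k!\ne 0$, so the division in the final expression is well defined.
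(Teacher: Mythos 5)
Your proof is correct and follows essentially the same route as the paper: apply Lemma~\ref{lema:producto} to the connected components, identify $E=D$ as the unique term with $|E|=|D|$, and evaluate $\collaged D = a_1!\cdots a_k!$ via Lemma~\ref{lema:collage}. The only difference is that you spell out the cardinality/disjointness argument showing $E=D$ is the sole maximal term, a detail the paper leaves implicit here (it appears in its proof of Lemma~\ref{lema:collage}).
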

\begin{proof}
By Lemma~\ref{lema:producto} we have
\begin{equation*}
\collaged D \,\diagra D  = \diagra{D_1} \cdots \diagra{D_m} -
\sum_{\substack{ E\, \supercla C_1, \dots, C_k \\  |E| < |D|}}
\collaged E \,\diagra E.
\end{equation*}
The claim follows now from Lemma~\ref{lema:collage}.
\end{proof}

\begin{nota} \label{nota:remueve-sesgado}
{\em
Let $E=\clase{\la/\alpha}$ be a diagram class.
We denote by $r_E$ the number of removable squares in $\la$ that are not in
$\alpha$.
This definition does not depend on the choice of the representative.
For example, if $\la =(4,3,2,2)$ and $\alpha=(3,3)$, then
$r_{\clase\la}=3$ and $r_{\clase \sesgala}=2$.
}
\end{nota}

\begin{lema} \label{lema:binom}
Let $\Delta_k= \raisebox{.1ex}{\tablau} \sqcup \cdots \sqcup \raisebox{.1ex}{\tablau}$ be
the disjoint union of $k$ squares, $k\ge 1$, and let $E$ be a (possibly empty) diagram class
with no connected component equal to $\tablau\,$.
Then
\begin{equation*}
\diagra{E\sqcup \Delta_k} = \diagra E \binom{\diagra{\tablau\,} - r_E}{k}.
\end{equation*}
In particular
\begin{equation*}
\diagra{\Delta_k} = \binom{\diagra{\tablau\,}}{k}.
\end{equation*}
\end{lema}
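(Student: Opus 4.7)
The plan is to set up a bijection
\begin{equation*}
\Diagra{E\sqcup\Delta_k}\;\longleftrightarrow\;\bigl\{(\rho,S)\mid\rho\in\Diagra E,\;S\text{ is a $k$-subset of corners of }\la\text{ disjoint from }\rho\bigr\}.
\end{equation*}
Once the bijection is available, the right-hand side has cardinality $\diagra E\binom{\diagra{\tablau\,}-r_E}{k}$, because for any representative $\rho\in\Diagra E$ the number of corners of $\la$ lying in $\rho$ equals $r_E$ (Notation~\ref{nota:remueve-sesgado}) and is therefore independent of $\rho$; the ``in particular'' statement is the specialisation $E=\vacio$, using $\diagra\vacio=1$ and $r_\vacio=0$.

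For the forward map I would send $\sigma=\la/\gamma\in\Diagra{E\sqcup\Delta_k}$ to the pair $(\rho,S)$ in which $S$ collects the size-one connected components of $\sigma$ (there are exactly $k$ such, because $E$ has no component equal to $\tablau\,$) and $\rho$ is the union of the remaining components. The key lemma I need for well-definedness is that \emph{every isolated square of a $\la$-removable skew diagram is a corner of $\la$}: if $c=(i,j)$ is isolated in $\sigma=\la/\gamma$, then $\gamma_i<j\leq\la_i$ combined with $(i,j+1)\notin\sigma$ forces $j=\la_i$ and $\gamma_i=\la_i-1$, and the partition inequality $\gamma_{i+1}\leq\gamma_i=\la_i-1$ together with $(i+1,\la_i)\notin\sigma$ then rules out $\la_{i+1}\geq\la_i$. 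Hence $\alpha=\gamma\cup S$ is a partition, $\rho=\la/\alpha$ lies in $\Diagra E$, and $S$ is disjoint from $\rho$.

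The inverse map would start from $(\rho,S)$ with $\rho=\la/\alpha$ and $S=\{c_1,\dots,c_k\}$, and define $\gamma=\alpha\setminus S$. Each $c_j=(i_j,\la_{i_j})$ lies in $\alpha$ (because $c_j\notin\rho$ forces $\alpha_{i_j}=\la_{i_j}$) and is in fact a corner of $\alpha$ (because $\alpha_{i_j+1}\leq\la_{i_j+1}<\la_{i_j}=\alpha_{i_j}$). Distinct corners of $\la$ are pairwise non-adjacent, since two corners in consecutive rows would share a column and contradict the defining inequality $\la_i>\la_{i+1}$; so the $c_j$ can be deleted from $\alpha$ simultaneously, $\gamma$ is a partition, and $\la/\gamma=\rho\sqcup S$. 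Isolation of each $c_j$ in $\la/\gamma$ follows because the only neighbours of $c_j$ that can belong to $\la$ are $(i_j,\la_{i_j}-1)$ and $(i_j-1,\la_{i_j})$, both of which lie in $\gamma$ by partition monotonicity and a short case split on whether $i_j-1$ is itself the index of some $c_{j'}\in S$. The main obstacle will be the simultaneous bookkeeping in the inverse map: ensuring that $\gamma$ is a partition, each $c_j$ is isolated in $\la/\gamma$, and the remaining components of $\la/\gamma$ form a diagram of class $E$, since the rows implicated in these three conditions overlap and a careless argument could mix up indices.
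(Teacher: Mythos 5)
Your proof is correct, but it takes a genuinely different route from the paper's. The author explicitly remarks that ``it is possible to give a direct proof'' and then deliberately avoids it, proving the lemma instead by induction on $k$ as an application of Lemma~\ref{lema:producto}: one computes the collage numbers ${\sf c}(E\sqcup\Delta_k,\Delta_1;E\sqcup\Delta_{k+1})=k+1$ and ${\sf c}(E\sqcup\Delta_k,\Delta_1;E\sqcup\Delta_k)=r_E+k$, so that Lemma~\ref{lema:producto} yields
$\diagra{E\sqcup\Delta_{k+1}}=\tfrac{1}{k+1}\bigl[\diagra{E\sqcup\Delta_k}\,\diagra{\Delta_1}-(r_E+k)\,\diagra{E\sqcup\Delta_k}\bigr]$,
and the binomial coefficient emerges from the telescoping product. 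What you give is precisely the direct bijective argument the author alludes to; its essential content is your key lemma that an isolated square of a $\la$-removable diagram must be a corner of $\la$, together with the converse operation of adjoining to a $\la$-removable $\rho$ any set of corners of $\la$ disjoint from it. The bookkeeping you worry about at the end does go through: since distinct corners of $\la$ are pairwise non-adjacent and $E$ has no singleton component, the isolated squares of $\la/\gamma$ are exactly the elements of $S$, and the verifications that $\gamma=\alpha\setminus S$ is a partition and that the remaining components form a diagram of class $E$ are routine. Your approach buys a transparent combinatorial explanation of why a binomial coefficient appears at all, and it is self-contained; the paper's approach is shorter because the geometric work has already been packaged into Lemma~\ref{lema:producto}, and it doubles as a worked example of that lemma, which is the author's stated reason for choosing it.
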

\begin{proof}
It is possible to give a direct proof.
We prefer, however, to show an application of Lemma~\ref{lema:producto}.
The proof is by induction on $k$.
Let $k=1$.
Since $E$ contains no component equal to $\tablau\,$, we have
${\sf c}(E,\Delta_1; E\sqcup \Delta_1) =1$ and
${\sf c}(E,\Delta_1; E) = r_E$.
Then, by Lemma~\ref{lema:producto},
\begin{equation*}
\diagra{E\sqcup \Delta_1} = \diagra E \diagra{\Delta_1} - r_E\, \diagra E.
\end{equation*}
Thus, the formula holds for $k=1$.
Suppose now, by induction hypothesis, that the formula holds for $k\ge 1$.
We have
\begin{equation*}
{\sf c}(E \sqcup \Delta_k,\Delta_1; E\sqcup \Delta_{k+1}) =k+1
\quad {\rm and}\quad {\sf c}(E\sqcup \Delta_k,\Delta_1; E\sqcup \Delta_k) = r_E +k.
\end{equation*}
Therefore, by Lemma~\ref{lema:producto}, we have
\begin{align*}
\diagra{E\sqcup \Delta_{k+1}} & = \frac{1}{k+1}
\left[ \diagra{E\sqcup \Delta_k} \diagra{\Delta_1}
-(r_E +k)\, \diagra{ E\sqcup \Delta_k}\right] \\
&= \frac{\diagra{E\sqcup \Delta_k}}{k+1}
\left[ \diagra{\,\tablau\,} - r_E -k\right].
\end{align*}
The claim now follows from the induction hypothesis for
$\diagra{E\sqcup \Delta_k}$.
\end{proof}

\begin{teor}\label{teor:elbueno}
For any nonempty diagram class $D$ there is a polynomial $p_D(x_C)$ with rational
coefficients, in the variables $x_C$, where $C$ runs over the set of connected diagram
classes of size $1 \le |C| \le |D|$, such that for all partitions $\la$ the number
$\diagra D$ is obtained from $p_D(x_C)$ evaluating each $x_C$ at $\diagra C$.
So, we have
\begin{equation*}
\diagra D = p_D(\diagra C).
\end{equation*}
If $D$ is not connected, the polynomial $p_D(x_C)$ depends only on
the variables $x_C$ with $|C| < |D|$.
\end{teor}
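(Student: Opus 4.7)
The plan is to proceed by strong induction on $|D|$, using Proposition~\ref{prop:producto} as the recursion. For the base case $|D|=1$, the only class is $\raisebox{.1ex}{\tablau}\,$, which is connected, and I take $p_D(x_C)=x_D$.

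For the inductive step I split into two cases. If $D$ is connected, I simply set $p_D(x_C)=x_D$; since $D$ itself is an admissible variable ($|D|\le|D|$), the identity $\diagra D=p_D(\diagra C)$ holds tautologically. If $D$ is disconnected, let $D=C_1^{\,\sqcup\,a_1}\sqcup\cdots\sqcup C_k^{\,\sqcup\,a_k}$ be a sorted decomposition. Because $D$ has at least two connected components, $|C_i|<|D|$ for every $i$. Proposition~\ref{prop:producto} then gives
\[
\diagra D \;=\; \frac{1}{a_1!\cdots a_k!}\Biggl[\,\prod_{i=1}^{k}\diagra{C_i}^{a_i} \;-\; \sum_{\substack{E\,\supercla\, C_1,\dots,C_k\\ |E|<|D|}} \collaged E\,\diagra E\,\Biggr].
\]

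By Lemma~\ref{lema:indep} and Definition~\ref{defi:inter-indep}, every multiplicity $\collaged E$ depends only on the diagram classes involved and not on $\la$, so it contributes a fixed rational (indeed integer) constant. Each $\diagra{C_i}$ is, by definition, the variable $x_{C_i}$ with $C_i$ connected and $|C_i|<|D|$. For each class $E$ in the sum we have $|E|<|D|$, so by the induction hypothesis $\diagra E=p_E(\diagra C)$ is a polynomial with rational coefficients in variables $x_C$ with $C$ connected and $|C|\le|E|<|D|$. Substituting these expressions into the displayed formula produces a polynomial with rational coefficients in variables $x_C$ with $C$ connected and $|C|<|D|$, which I take as $p_D(x_C)$. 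This simultaneously yields the main assertion and the final clause, since in the disconnected case all variables that appear carry $|C|<|D|$.

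No real obstacle stands in the way: the substance of the theorem is already packed into Lemma~\ref{lema:collage} and Proposition~\ref{prop:producto}. One needs only to check that the induction is well-founded, and it is, because the recursion involves only terms with $|E|<|D|$ and factors with $|C_i|<|D|$. Rationality is preserved throughout, because the only operations performed are integer combinations of products and division by the positive integer $a_1!\cdots a_k!$; and independence of $\la$ is inherited from the fact that each $\collaged E$ and each inductively constructed $p_E$ is already independent of $\la$.
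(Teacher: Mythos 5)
Your proposal is correct and follows essentially the same route as the paper: induction on $|D|$, taking $p_D = x_D$ in the connected case and otherwise defining $p_D$ by the recursion of Proposition~\ref{prop:producto}, with the inductive hypothesis supplying $p_E$ for the classes $E$ of strictly smaller size. The observations you add about rationality, independence of $\la$, and the final clause on the variables appearing in the disconnected case match what the paper leaves implicit.
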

\begin{proof}
The proof is by induction on the size of $D$.
If $|D|= 1$, then $D= \tablau \,$ and the polynomial
$p_D(x_C)= x_D$ satisfies the theorem.
Now, suppose $|D|=n>1$ and that, by induction hypothesis, the statement of the theorem holds
for all diagrams of size smaller than $n$.
If $D$ is connected, the polynomial $p_D(x_C)= x_D$ satisfies the theorem.
If $D$ is not connected, let $D_1 \sqcup \cdots \sqcup D_m$ be a decomposition of $D$ into its
connected components and $C_1^{\,\sqcup\, a_1} \sqcup \cdots\, \sqcup C_k^{\,\sqcup \, a_k}$
be a sorted decomposition of $D$.
Then by Proposition~\ref{prop:producto} we have that
\begin{equation*}
\diagra D = \frac{1}{a_1! \cdots a_k !}\Biggl[
\diagra{C_1}^{a_1} \cdots \diagra{C_k}^{a_k} -
\sum_{\substack{ E\, \supercla C_1, \dots, C_k \\  |E| < |D|}}
\collaged E \,\diagra E \Biggr].
\end{equation*}
For any diagram $E$ in the sum above, let $p_E(x_C)$ the polynomial
that exists by induction hypothesis.
Define
\begin{equation*}
p_D(x_C)= \frac{1}{a_1! \cdots a_k !}\Biggl[
(x_{C_1})^{a_1} \cdots (x_{C_k})^{a_k} -
\sum_{\substack{ E\, \supercla C_1, \dots, C_k \\  |E| < |D|}}
\collaged E \, p_E(x_C) \Biggr].
\end{equation*}
Then we have that $\diagra D = p_D( \diagra C)$.
The last claim is clear.
\end{proof}

\begin{obse} \label{obse:polis}
{\em
Note that sometimes, as we did in the proof, depending on the context,
we allow the polynomial $p_D(x_C)$ to be defined
in a polynomial ring with more variables, namely, with variables $x_C$
running over connected diagram classes $C$ of size $|C| \le d$ for some $d > |D|$.
Of course the coefficients of the new variables will be zero.
In the same spirit we define $p_\vacio(x_C) = 1$ and the number of its variables
will be determined by the context.
}
\end{obse}

\begin{obse} \label{obse:tiras-borde}
{\em
Let $D$ be a diagram class.
If, in the polynomial $p_D(x_C)$, we substitute each variable $x_C$
by a new variable $t_{B(C)}$, where $B(C)$ is the principal border strip
of $C$, we obtain a new polynomial $\wt p_D(t_B)$ with rational coefficients
in (possibly fewer) variables $t_B$, where $B$ runs over the set of
border strip classes $B$ of size $1 \le |B| \le |D|$.
Because of Lemma~\ref{lema:redu-tirabo} and Theorem~\ref{teor:elbueno},
this polynomial satisfies the identity
\begin{equation*}
\diagra D = \wt p_D( \diagra B).
\end{equation*}
}
\end{obse}

\begin{ejems} \label{ejem:reduc}
{\em
In the following examples (needed in Sections~\ref{sec:evallr} and~\ref{sec:evalk})
we write some evaluations $\diagra D = p_D(\diagra C)$ of the polynomials just defined.
For these examples we use Lemma~\ref{lema:binom} and Proposition~\ref{prop:producto}.
\begin{enumerate}

\item
$\diagra{\,\tablatudh\,} = \left[ \diagra{\,\tablau\,} -1\right]\,
\diagra{\,\tabladh\,}$.

\item
$\diagra{\,\tablacdudh\,} =
\dbinom{\diagra{\,\tablau\,} -1}{2}\,\diagra{\,\tabladh\,}$.

\item
$\diagra{\,\tablacdhdh\,} = \dbinom{\diagra{\,\tabladh\,}}{2}$.

\item
$\diagra{\, \tablacdhdv \,} = \diagra{\,\tabladh\,}\,
\diagra{\, {\raisebox{.65ex}{\tabladv}}\,}
- \diagra{\, {\raisebox{.65ex}{\tablatud}}\, }$.

\item
$\diagra{\,\tablau\sqcup\tablath\,} =
[\diagra{\,\tablau\,} -1]\, \diagra{\,\tablath\,}$.

\item
$\diagra{\,\tablau\sqcup{\raisebox{.65ex}{\tablatdu}}\,} =
[ \diagra{\,\tablau\,} -2]\,
\diagra{\,{\raisebox{.65ex}{\tablatdu}}\,}$.

\item
$\diagra{\,\tablau\sqcup{\raisebox{.65ex}{\tablatud}}\,} =
[ \diagra{\,\tablau\,} -1]\,
\diagra{\,{\raisebox{.65ex}{\tablatud}}\,}$.
\end{enumerate}
}
\end{ejems}

Note the values of $\diagra{\ast}$ for the conjugated skew diagrams of
the examples above can be computed from identity~\eqref{ecua:conjugada}
in this section.

\section{The numbers {\mathversion{bold}$\multiliric \pi$}}
\label{sec:evallr}

In this section we prove some properties of the numbers ${\sf lr}(\sigma,\sigma;\pi)$
for a skew diagram $\sigma$.
The main result of this section (Theorem~\ref{teor:poliliritirabo}) shows, that for each composition $\pi$,
there is a polynomial $q_{\ol\pi}(x_C)$ (depending only on $\ol\pi$) with rational coefficients in
variables $x_C$, where $C$ runs over the set of connected diagram classes of size $1 \le |C| \le |\ol\pi|$,
such that, for all partitions $\la$, the evaluation of $q_{\ol\pi}(x_C)$ at the numbers $\diagra C$ yields
the number $\multiliric \pi$.
We will use these polynomials in the next section for the computation of Kronecker squares.
At the end of the section we compute explicitly $\multiliric \pi$ for all partitions
$\pi$ of depth at most 4.

\begin{lema} \label{lema:mismoscaras}
Let $\rho$, $\sigma$ be isomorphic skew diagrams.
Then $\cara\rho = \cara\sigma$.
\end{lema}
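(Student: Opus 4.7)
The plan is to reduce the statement to a claim about Littlewood–Richardson coefficients. By Lemma~\ref{lema:skew-standar}(2), the skew character $\cara{\tau}$ of any skew diagram $\tau$ is completely determined by the numbers $c^{\,\tau}_\mu$, so it suffices to show that $c^{\,\rho}_\mu = c^{\,\sigma}_\mu$ for every partition $\mu$ whenever $\rho\cong \sigma$. Writing $\rho = \bigcup_{i\in \nume m} \rho_i$ and $\sigma = \bigcup_{i\in \nume m} \sigma_{\pi(i)}$ for the decompositions into connected components, the isomorphism provides, for each $i$, a translation $f_i\colon\rho_i \to \sigma_{\pi(i)}$.

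First I would establish that the character of a skew diagram factors over its connected components, namely
\[
\cara{\tau_1\sqcup\tau_2} = \cara{\tau_1}\bullet \cara{\tau_2}.
\]
This can be read off from \eqref{ecua:sesgaliri}: if $\tau = \la/\alpha$ decomposes as a disjoint union of two components that occupy disjoint sets of rows and columns after a permissible translation, then an LR tableau of shape $\tau$ and content $\mu$ is equivalent to a pair of LR tableaux filling the two components with contents $\nu^{(1)}$ and $\nu^{(2)}$ summing to $\mu$; hence $c^{\,\tau}_\mu = \sum_{\nu^{(1)}+\nu^{(2)}=\mu} c^{\,\tau_1}_{\nu^{(1)}} c^{\,\tau_2}_{\nu^{(2)}}$, which is exactly the multiplicity of $\cara\mu$ in $\cara{\tau_1}\bullet \cara{\tau_2}$. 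Iterating,
\[
\cara{\rho} = \cara{\rho_1}\bullet\cdots\bullet\cara{\rho_m} \quad\text{and}\quad \cara{\sigma} = \cara{\sigma_1}\bullet\cdots\bullet \cara{\sigma_m}.
\]
Since the product $\bullet$ is commutative, the reordering by $\pi$ is harmless, reducing the problem to showing $\cara{\rho_i}=\cara{\sigma_{\pi(i)}}$ for each $i$, that is, to the case of a single connected skew diagram and a translation between its two realisations.

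In the connected case, each $\rho_i$ has the form $\la/\alpha$ and its translates are obtained by adding a constant to all first coordinates (a vertical shift) or to all second coordinates (a horizontal shift), or by padding with empty rows on top. A horizontal shift of $c$ replaces $\la$ by $\la + (c^p)$ and $\alpha$ by $\alpha + (c^p)$, which leaves the Jacobi–Trudi determinant $\det(h_{\la_i-\alpha_j-i+j})$ invariant, and therefore leaves $c^{\,\rho_i}_\mu$ invariant for every $\mu$. A vertical shift or the addition of empty rows on top leaves the underlying set of LR fillings — and thus every coefficient $c^{\,\rho_i}_\mu$ — unchanged, since the extra rows $\la_j = \alpha_j = 0$ contribute nothing. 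Combining both kinds of shifts gives $\cara{\rho_i} = \cara{\sigma_{\pi(i)}}$ and completes the proof.

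The main obstacle is the disjoint-union factorisation $\cara{\tau_1\sqcup\tau_2} = \cara{\tau_1}\bullet\cara{\tau_2}$: it is routine via the LR rule, but it does require checking that an LR filling of a disconnected skew diagram truly decouples into independent LR fillings of the components (i.e., that the lattice/Yamanouchi condition on the reverse reading word imposes no constraint across components once the components are arranged so that the rows of one strictly precede the rows of the other). Everything else in the argument is bookkeeping with the Jacobi–Trudi determinant and commutativity of $\bullet$.
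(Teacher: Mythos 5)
Your argument takes a genuinely different route from the paper's: the paper disposes of this lemma in three lines via the Murnaghan--Nakayama rule for skew characters (an isomorphism $\fun f\rho\sigma$ carries border strip tableaux of shape $\sigma$ to border strip tableaux of shape $\rho$ of the same type and height, so the two characters have identical values). Your reduction through Littlewood--Richardson coefficients, the factorisation over connected components, and Jacobi--Trudi is workable in principle, but it is longer and, as written, the pivotal step contains a genuine error.

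The error is in the factorisation step, precisely the one you flag as the main obstacle. The formula $c^{\,\tau}_\mu = \sum_{\nu^{(1)}+\nu^{(2)}=\mu} c^{\,\tau_1}_{\nu^{(1)}} c^{\,\tau_2}_{\nu^{(2)}}$ is not correct, and it is \emph{not} the multiplicity of $\cara\mu$ in $\cara{\tau_1}\bullet\cara{\tau_2}$: by the Littlewood--Richardson rule that multiplicity is $\sum_{\nu^{(1)},\,\nu^{(2)}} c^{\,\tau_1}_{\nu^{(1)}}\, c^{\,\tau_2}_{\nu^{(2)}}\, c^{\,\mu}_{\nu^{(1)}\,\nu^{(2)}}$, with the inner LR coefficient in place of the condition $\nu^{(1)}+\nu^{(2)}=\mu$. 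Moreover the decoupling you propose to check is false: the lattice condition \emph{does} couple the components. Take $\tau_1\sqcup\tau_2$ to be two boxes in distinct rows and columns; the unique LR filling of content $(1,1)$ puts a $2$ in the lower box, whose restriction to that component is not an LR tableau of partition content. So an LR filling of a disconnected shape does not restrict to independent LR fillings of the components. The conclusion you need, $\cara{\tau_1\sqcup\tau_2}=\cara{\tau_1}\bullet\cara{\tau_2}$, is nevertheless true and standard, but the clean justification bypasses the LR rule entirely: distinct connected components of a skew diagram occupy disjoint rows and disjoint columns, so \emph{semistandard} fillings (not LR fillings) decouple, giving $s_{\tau_1\sqcup\tau_2}=s_{\tau_1}s_{\tau_2}$ directly from the combinatorial definition of skew Schur functions. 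Once you argue at the level of semistandard tableaux, the same observation also handles your connected case at once --- the SSYT conditions are translation-invariant --- and makes the detour through Jacobi--Trudi unnecessary. In short: the architecture of your proof can be repaired, but the specific identity and the specific verification you propose for its key step are both wrong, and the paper's Murnaghan--Nakayama argument is substantially shorter.
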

\begin{proof}
We use the Murnaghan-Nakayama formula for skew characters~\cite[7.17.3]{stan}.
Let $\fun f \rho \sigma$ be an isomorphism of skew diagrams.
If $\fun T \sigma {\nume n}$ is a border strip tableau of shape $\sigma$ and type $\gamma$,
then, it follows from Definition~\ref{defi:diagramas} that
$\fun {T \circ f} \rho {\nume n}$ is a border strip tableau of shape
$\rho$ and type $\gamma$ and that both tableaux have the same height.
Thus the Murnaghan-Nakayama formula implies our claim.
\end{proof}

\begin{coro} \label{coro:indep}
Let $\rho$, $\sigma$ be isomorphic skew diagrams.
Then

(1) for any composition $\pi$ of $|\sigma|$ we have
${\sf lr}(\rho,\rho;\pi) = {\sf lr}(\sigma,\sigma;\pi)$;

(2) for all partitions $\rho(1), \dots, \rho(r)$ we have
$\clirir{\rho} =\clirir{\sigma}$;

(3) $f^\rho = f^\sigma$.
\end{coro}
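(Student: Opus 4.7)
The plan is to derive all three statements as immediate consequences of Lemma~\ref{lema:mismoscaras}, which already establishes $\cara\rho = \cara\sigma$ for isomorphic skew diagrams. Each part will be obtained by expressing the combinatorial quantity in question as a character-theoretic inner product (or character value) involving $\cara\sigma$, and then substituting $\cara\rho$ for $\cara\sigma$.

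For part (1), I would invoke Lemma~\ref{lema:lr} to write
\[
{\sf lr}(\sigma,\sigma;\pi) = \langle \cara\sigma \otimes \cara\sigma, \permu\pi \rangle
\quad\text{and}\quad
{\sf lr}(\rho,\rho;\pi) = \langle \cara\rho \otimes \cara\rho, \permu\pi \rangle,
\]
and observe that by Lemma~\ref{lema:mismoscaras} these two right-hand sides coincide. For part (2), I would use identity~\eqref{ecua:sesgaliri} to express
\[
\clirir\sigma = \langle \cara\sigma, \cara{\rho(1)}\bullet\cdots\bullet\cara{\rho(r)}\rangle,
\]
and analogously for $\rho$; once again Lemma~\ref{lema:mismoscaras} finishes the argument. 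For part (3), Lemma~\ref{lema:skew-standar}(1) gives $f^\sigma = \cara\sigma(1)$ and $f^\rho = \cara\rho(1)$, and equality of the two characters immediately gives $f^\rho = f^\sigma$.

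Since everything reduces to the preceding lemma, there is no serious obstacle; the only subtlety is to notice that each of the three objects on the left-hand side admits a clean character-theoretic description that has already been recorded earlier in the paper, so the corollary is essentially a bookkeeping exercise. A combinatorial proof of (2) via the tableau switching machinery would also be available but is unnecessary here given the character formalism already in place.
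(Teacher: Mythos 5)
Your proposal is correct and follows exactly the route of the paper's own proof: part (1) via Lemma~\ref{lema:lr}, part (2) via equation~\eqref{ecua:sesgaliri}, and part (3) via Lemma~\ref{lema:skew-standar}(1), each combined with Lemma~\ref{lema:mismoscaras}. There is nothing to add.
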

\begin{proof}
(1) follows from Lemmas~\ref{lema:lr} and~\ref{lema:mismoscaras}.
(2) follows from Lemma~\ref{lema:mismoscaras} and
equation~\eqref{ecua:sesgaliri} applied to $\rho$ and $\sigma$.
(3) follows from Lemmas~\ref{lema:mismoscaras} and~\ref{lema:skew-standar}.1.
\end{proof}

\begin{nota}
{\em
Let $D=\clase\sigma$, $\pi$ be a composition of $|D|$
and $\rho(1), \dots, \rho(r)$ be partitions.
Then we set $\cara D= \cara{\sigma}$,
${\sf lr}(D,D;\pi)= {\sf lr}(\sigma,\sigma;\pi) $,
$\clirir D = \clirir \sigma$ and $f^D = f^\sigma$.
By the previous results these definitions do not depend on the
representative of $D$.

We denote by $\eseefe Dd$ the set of all diagram classes of size $d$.
}
\end{nota}

The next formula is one of the ingredients in our enhancement of the RT method.

\begin{prop} \label{prop:multiliridiag}
Let $\la$ be a partition of $n$, $\pi$ be a composition of $n$
and $d= |\ol\pi|$.
Then
\begin{equation} \label{ecua:multiliri-diag}
\multiliric \pi = \sum_{D \in \eseefe Dd} {\sf lr}(D,D;\ol\pi)\, \diagra D .
\end{equation}
\end{prop}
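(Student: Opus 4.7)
The plan is to deduce the formula by combining Lemma~\ref{lema:itera} with the invariance properties collected in Corollary~\ref{coro:indep}, then reorganizing the resulting sum according to isomorphism classes of skew diagrams.

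First I would apply Lemma~\ref{lema:itera} in the special case $\mu = \la$. Since $|\pi| = n$ and $|\ol\pi|=d$, one has $\pi_1 = n-d$, so
\[
\multiliric \pi \;=\; \sum_{\alpha \vdash n-d} {\sf lr}(\la/\alpha, \la/\alpha; \ol\pi),
\]
where by Definition~\ref{defi:lr} only terms with $\alpha \subseteq \la$ contribute (the remaining terms vanish, as there is no way to produce a pair of LR multitableaux on a non-subdiagram). Thus the summation runs exactly over the partitions $\alpha$ such that $\la/\alpha$ is a $\la$-removable diagram of size $d$ in the sense of Paragraph~\ref{def:removible}.

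Next, I would partition this index set according to the diagram class of $\la/\alpha$. By Corollary~\ref{coro:indep}(1), the number ${\sf lr}(\la/\alpha, \la/\alpha; \ol\pi)$ depends only on the isomorphism class $D = \clase{\la/\alpha}$, so it can be pulled out of the inner sum and replaced by ${\sf lr}(D,D;\ol\pi)$ using the notational convention introduced just before the proposition. The number of partitions $\alpha \subseteq \la$ whose skew complement lies in a given class $D$ is, by the very definition of $\diagra D$, equal to $\diagra D$.

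Combining these two steps yields
\[
\multiliric \pi \;=\; \sum_{D \in \eseefe D d} \diagra D \cdot {\sf lr}(D,D; \ol\pi),
\]
which is the desired identity. There is no real obstacle here: the proposition is essentially a bookkeeping consequence of Lemma~\ref{lema:itera} once one has observed the class invariance given by Corollary~\ref{coro:indep}(1); the only point worth being careful about is matching the index $\pi_1 = n-d$ correctly so that the summation index $\alpha$ ranges exactly over the $\la$-removable subdiagrams of size $d$.
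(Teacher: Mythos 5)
Your proposal is correct and follows essentially the same route as the paper: the author likewise specializes Lemma~\ref{lema:itera} to $\mu=\la$ (with $\pi_1=n-d$) and then groups the partitions $\alpha$ of $n-d$ by the diagram class of $\la/\alpha$, invoking Corollary~\ref{coro:indep}(1) to replace each summand by ${\sf lr}(D,D;\ol\pi)$ counted with multiplicity $\diagra D$. Your extra remark about discarding the $\alpha\not\subseteq\la$ terms is a correct, if routine, point of bookkeeping.
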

\begin{proof}
This follows from Lemma~\ref{lema:itera} and Corollary~\ref{coro:indep}(1)
by grouping together, for each $D\in\eseefe Dd$, all partitions $\alpha$ of
$n-d$ such that $\clase \sesgala = D$ and counting them together by the factor
$\diagra D$.
\end{proof}

\begin{lema} \label{lema:transp}
Let $D$ be a diagram class and $\pi$ be a composition of $|D|$.
Then
\begin{equation*}
{\sf lr}(D,D;\pi) = {\sf lr}(D^\prime,D^\prime;\pi).
\end{equation*}
\end{lema}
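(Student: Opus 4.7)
The plan is to unfold the definition of ${\sf lr}(D, D; \pi)$ and apply the conjugation identity from equation~\eqref{ecua:conjuliri}. Writing $D = \clase{\la/\alpha}$, Definition~\ref{defi:lr} together with the well-definedness assertion from Corollary~\ref{coro:indep}(2) gives
\[
{\sf lr}(D, D; \pi) = \sum_{\rho(1)\vdash\pi_1, \dots, \rho(r)\vdash\pi_r}
\bigl(c^{D}_{(\rho(1),\dots,\rho(r))}\bigr)^2.
\]
Equation~\eqref{ecua:conjuliri} applied to $\la/\alpha$ yields $c^{D}_{(\rho(1),\dots,\rho(r))} = c^{D'}_{(\rho(1)',\dots,\rho(r)')}$, where $D' = \clase{\la'/\alpha'}$ by Paragraph~\ref{para:clase-diag}. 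Substituting and then reindexing the sum along the involution $\rho(i) \mapsto \rho(i)'$ on the set of partitions of $\pi_i$ (a bijection for each $i$), the right-hand side becomes $\sum_{\sigma(i)\vdash\pi_i} \bigl(c^{D'}_{(\sigma(1),\dots,\sigma(r))}\bigr)^2 = {\sf lr}(D', D'; \pi)$.

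An equivalent and even shorter route goes through Lemma~\ref{lema:lr}: both sides are scalar products against $\permu\pi$, so it suffices to check $\cara{D'}\otimes\cara{D'} = \cara{D}\otimes\cara{D}$. Since the conjugate skew character satisfies $\cara{D'} = \cara{D}\cdot\mathrm{sgn}$ (a standard fact that can be derived, for instance, from the Murnaghan--Nakayama expansion used in Lemma~\ref{lema:mismoscaras}, where each border strip of height $h$ in a tableau of shape $\la/\alpha$ corresponds to one of height $|\zeta|-1-h$ in the conjugate), and since $\mathrm{sgn}^2 = 1$, the two Kronecker products coincide.

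I expect no real obstacle: both approaches are essentially one-line verifications that package already-proved results. The only point requiring a moment's care is that equation~\eqref{ecua:conjuliri} is stated for a fixed representative $\la/\alpha$, but this is harmless because all our quantities $\clirir{D}$, ${\sf lr}(D,D;\pi)$ and $D'$ depend only on the isomorphism class, not on the representative.
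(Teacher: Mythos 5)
Your first argument is exactly the paper's proof: unfold Definition~\ref{defi:lr} to write ${\sf lr}(D,D;\pi)$ as a sum of squares of $\clirir{\la/\alpha}$ and apply equation~\eqref{ecua:conjuliri}, the reindexing by the involution $\rho(i)\mapsto\rho(i)'$ being left implicit there. Your alternative route via Lemma~\ref{lema:lr} and $\cara{D'}=\cara{D}\cdot\mathrm{sgn}$ is also correct, but the main argument matches the paper, so there is nothing to add.
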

\begin{proof}
Recall that $D^\prime$ is the conjugate of $D$ (see Paragraph~\ref{para:clase-diag}).
Let $\sesgala$ be a representative of $D$.
We have, by Definition~\ref{defi:lr}, that
\begin{equation*}
{\sf lr}(D,D;\pi) = \sum_{\rho(1) \vdash \pi_1, \dots,\, \rho(r)\vdash \pi_r}
\left[ \clirir \sesgala \right]^2 .
\end{equation*}
So, equation~\eqref{ecua:conjuliri} implies our claim.
\end{proof}

\begin{teor} \label{teor:poliliritirabo}
Let $\pi$ be a composition of $n$.
Then, there exists a polynomial ${q}_{\ol\pi}(x_C)$ with rational coefficients
in the variables $x_C$, where $C$ runs over the set of connected diagram
classes of size $1 \le |C| \le |\ol\pi|$, such that, for all partitions
$\la$ of $n$, the number $\multiliric \pi$ is obtained from $q_{\ol\pi}(x_C)$
by evaluating each $x_C$ at $\diagra C$.
That is,
\begin{equation*}
\multiliric \pi = q_{\ol\pi}(\diagra C).
\end{equation*}
\end{teor}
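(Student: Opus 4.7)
The plan is to obtain the theorem as an immediate consequence of Proposition~\ref{prop:multiliridiag} combined with Theorem~\ref{teor:elbueno}, essentially by composing the linear decomposition with the polynomial expression for each $\diagra D$.

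First, set $d = |\ol\pi|$. Proposition~\ref{prop:multiliridiag} gives the linear expression
\begin{equation*}
\multiliric \pi = \sum_{D \in \eseefe D d} {\sf lr}(D,D;\ol\pi)\, \diagra D,
\end{equation*}
in which the coefficients ${\sf lr}(D,D;\ol\pi)$ are nonnegative integers depending only on $D$ and $\ol\pi$, and not on $\la$. This is already very close to what is needed, but the quantities $\diagra D$ are indexed by \emph{all} (possibly disconnected) diagram classes of size $d$, whereas the theorem requires a polynomial expression in terms of $\diagra C$ for \emph{connected} classes $C$ only.

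Next, for each $D \in \eseefe D d$, I would invoke Theorem~\ref{teor:elbueno} to obtain a polynomial $p_D(x_C)$ with rational coefficients in the variables $x_C$, where $C$ ranges over connected diagram classes with $1 \le |C| \le |D| = d = |\ol\pi|$, such that $\diagra D = p_D(\diagra C)$ for every partition $\la$. Substituting these into the display above and defining
\begin{equation*}
q_{\ol\pi}(x_C) = \sum_{D \in \eseefe D d} {\sf lr}(D,D;\ol\pi)\, p_D(x_C)
\end{equation*}
produces a finite sum (since $\eseefe D d$ is finite) which is a polynomial with rational coefficients in variables $x_C$ indexed by connected diagram classes of size at most $|\ol\pi|$. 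Evaluating at $x_C = \diagra C$ and using linearity recovers the identity $\multiliric \pi = q_{\ol\pi}(\diagra C)$.

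There is no serious obstacle in this argument; the theorem is a direct repackaging of the two earlier results. The only bookkeeping to verify is that the variables appearing in $q_{\ol\pi}$ satisfy the size bound of the statement, which follows because each $D \in \eseefe D d$ has size exactly $d = |\ol\pi|$, so Theorem~\ref{teor:elbueno} already provides $p_D$ in variables $x_C$ with $|C| \le |\ol\pi|$. The rationality of the coefficients is automatic since ${\sf lr}(D,D;\ol\pi) \in \noneg$ and each $p_D$ has rational coefficients.
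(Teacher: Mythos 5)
Your proof is correct and follows exactly the paper's own argument: apply Proposition~\ref{prop:multiliridiag} to write $\multiliric\pi$ as a linear combination of the $\diagra D$ over $D\in\eseefe Dd$, then substitute the polynomials $p_D(x_C)$ from Theorem~\ref{teor:elbueno} to define $q_{\ol\pi}(x_C)=\sum_{D}{\sf lr}(D,D;\ol\pi)\,p_D(x_C)$. Nothing is missing.
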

\begin{proof}
Let $d=|\ol\pi|$.
Then, by Proposition~\ref{prop:multiliridiag}, we have
\begin{equation*}
\multiliric \pi = \sum_{D \in \eseefe Dd} {\sf lr}(D,D;\ol\pi)\, \diagra D .
\end{equation*}
Let
\begin{equation*}
q_{\ol\pi}(x_C)=\sum_{D\in\eseefe Dd} {\sf lr}(D,D;\ol\pi)\,p_D(x_C) .
\end{equation*}
The claim follows from Theorem~\ref{teor:elbueno}.
\end{proof}

The fact that $\pi$ is a composition and not merely a partition will play
an important role in Section~\ref{sec:evalk}.

\begin{obse} \label{obse:tiras-borde-liri}
{\em
Let $\pi$ be a composition.
If, in the polynomial $q_{\ol\pi}(x_C)$, we substitute each variable $x_C$
by a new variable $t_{B(C)}$, where $B(C)$ is the principal border strip of $C$
(see Paragraph~\ref{para:clase-diag}), we obtain a new polynomial $\wt q_{\ol\pi}(t_B)$
with rational coefficients in (possibly fewer) variables $t_B$, where $B$ runs
over the set of border strip classes $B$ of size $1 \le |B| \le |\ol\pi|$.
Because of Remark~\ref{obse:tiras-borde} and Theorem~\ref{teor:poliliritirabo},
this polynomial satisfies the identity
\begin{equation*}
\multiliric \pi = \wt q_{\ol\pi}(\diagra B).
\end{equation*}
}
\end{obse}

The following formula will be useful in the next section.

\begin{lema} \label{lema:lr-delta}
Let $\pi$ be a composition of $d$.
Then
\begin{equation*}
{\sf lr}(\Delta_d, \Delta_d; \pi) = \binom{d}{\pi} d!
\end{equation*}
\end{lema}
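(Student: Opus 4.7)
The plan is to identify $\chi^{\Delta_d}$ as the regular character of $\sime d$, and then exploit the absorbing property of the regular representation under inner tensor products.

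First I would observe that $\Delta_d$ is the disjoint union of $d$ single boxes, and that the skew character of a disjoint union of skew diagrams is the outer product $\bullet$ of the skew characters of the pieces (this is immediate from Lemma~\ref{lema:skew-standar}(2) and the product structure of LR tableaux on a disjoint union, or equivalently from the factorization $s_{\sigma_1\sqcup\sigma_2}=s_{\sigma_1}s_{\sigma_2}$ of skew Schur functions under the characteristic map). Since each single box contributes the trivial character $\chi^{(1)}$ of $\sime 1$,
\[
\chi^{\Delta_d} = \underbrace{\chi^{(1)}\bullet\cdots\bullet\chi^{(1)}}_{d\text{ factors}}
= {\rm Ind}_{\sime 1\times\cdots\times\sime 1}^{\sime d}(1) = \permu{(1^d)},
\]
which is the regular character of $\sime d$. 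In particular $\chi^{\Delta_d}(1)=f^{\Delta_d}=d!$.

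Next I would invoke the absorbing property of the regular representation: for any character $\psi$ of $\sime d$ one has $\permu{(1^d)}\otimes\psi=\psi(1)\,\permu{(1^d)}$. Taking $\psi=\chi^{\Delta_d}$ yields $\chi^{\Delta_d}\otimes\chi^{\Delta_d}=d!\,\permu{(1^d)}$. Combined with Lemma~\ref{lema:lr}, this gives
\[
{\sf lr}(\Delta_d,\Delta_d;\pi)
=\langle\chi^{\Delta_d}\otimes\chi^{\Delta_d},\permu\pi\rangle
=d!\,\langle\permu{(1^d)},\permu\pi\rangle
=d!\,\permu\pi(1)
=d!\binom{d}{\pi},
\]
where $\langle\permu{(1^d)},\permu\pi\rangle=\permu\pi(1)$ is Frobenius reciprocity with the trivial subgroup (equivalently, $\permu{(1^d)}=\sum_\mu f^\mu\chi^\mu$ is the regular character, so its pairing with any character is that character's degree), and $\permu\pi(1)=\binom{d}{\pi}$ is just the index $[\sime d:\sime\pi]$.

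There is no substantive obstacle: the only step requiring care is the identification $\chi^{\Delta_d}=\permu{(1^d)}$, which is a routine application of the outer product formula for skew characters of disjoint unions. Everything else is formal manipulation using Lemma~\ref{lema:lr} and basic properties of the regular representation.
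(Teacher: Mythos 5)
Your proof is correct, but it takes a genuinely different route from the paper's. The paper argues by direct enumeration: it splits the $d$ boxes of $\Delta_d$ into groups of sizes $\pi_1,\dots,\pi_r$ in $\binom{d}{\pi}$ ways, notes that the number of LR tableaux of shape $\Delta_{\pi_i}$ and content $\rho(i)$ is $f^{\rho(i)}$, deduces $c^{\Delta_d}_{(\rho(1),\dots,\rho(r))}=\binom{d}{\pi}\prod_i f^{\rho(i)}$, and then squares and sums over contents using $\sum_{\la\vdash n}(f^{\la})^2=n!$. You instead identify $\cara{\Delta_d}=\permu{(1^d)}$ as the regular character of $\sime d$ and invoke its absorbing property under the Kronecker product, reducing everything to $\langle\permu{(1^d)},\permu{\pi}\rangle=\binom{d}{\pi}$. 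Both arguments are sound and rest on essentially equivalent facts (the identity $\sum_\la (f^\la)^2=d!$ is the statement that $\permu{(1^d)}$ is the regular character). The paper's count stays entirely inside the combinatorial framework of LR multitableaux and yields the intermediate formula for $c^{\Delta_d}_{(\rho(1),\dots,\rho(r))}$ as a by-product; your argument is shorter and makes conceptually transparent why the answer is $d!$ times the degree $\permu{\pi}(1)$, at the cost of routing through the character-theoretic interpretation of Lemma~\ref{lema:lr} and the outer-product factorization of skew characters over disjoint unions.
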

\begin{proof}
Recall that $\Delta_d= \raisebox{.1ex}{\tablau} \sqcup \cdots \sqcup \raisebox{.1ex}{\tablau}\,$
is the disjoint union of $d$ squares.
They can be divided into groups of sizes
$\pi_1, \dots, \pi_r$ in $\binom{d}{\pi}$ ways.
Let $\rho(i)$ be a partition of $\pi_i$.
Since the number of LR tableaux of shape $\Delta_{\pi_i}$
and content $\rho(i)$ is equal to $f^{\rho(i)}$, we have
\begin{equation*}
\clirir {\Delta_d} = \binom{d}{\pi} \prod_{i \in \nume r} f^{\rho(i)}.
\end{equation*}
Then, by Definition~\ref{defi:lr}, we have
\begin{equation*}
{\sf lr}(\Delta_d, \Delta_d; \pi) =
\binom{d}{\pi}^{\! 2}
\prod_{i\in \nume r} \sum_{\rho(i)\vdash \pi_i}\left( f^{\rho(i)} \right)^2 .
\end{equation*}
But for any $n$, one has $\sum_{\la\vdash n} \left( f^\la \right) ^2 = n!$
So, the claim follows.
\end{proof}

The following result has a simple direct proof and is probably well-known.
However, it pops up in our context in a nice way.
It is a sort of generalization of the unimodality property for binomial
coefficients.

\begin{coro}
Let $\la$, $\mu$ be partitions of $d$.
If $\mu \domina \nu$, then $\binom{d}{\mu} \le \binom{d}{\nu}$.
\end{coro}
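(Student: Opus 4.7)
The plan is to obtain this as an immediate consequence of Theorem~\ref{teor:coefi-dec} and Lemma~\ref{lema:lr-delta}. The hint \emph{``it pops up in our context in a nice way''} suggests reading the desired inequality through the numbers ${\sf lr}(\Delta_d, \Delta_d; \pi)$: by Lemma~\ref{lema:lr-delta} these encode the multinomial coefficients $\binom{d}{\pi}d!$, and by Theorem~\ref{teor:coefi-dec} the function $\pi \mapsto {\sf lr}(\sesgala,\sesgamb;\pi)$ is weakly decreasing in the dominance order on partitions.

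First I would realize $\Delta_d$ as a genuine skew shape $\sesgala$, so that Theorem~\ref{teor:coefi-dec} is applicable: take for instance $\la = (d,d-1,\dots,2,1)$ and $\alpha = (d-1,d-2,\dots,1,0)$, for which $\sesgala$ is the staircase skew shape consisting of $d$ isolated squares. Thus $\sesgala$ is isomorphic to $\Delta_d$, and by Corollary~\ref{coro:indep}(1) we have ${\sf lr}(\sesgala, \sesgala; \pi) = {\sf lr}(\Delta_d, \Delta_d; \pi)$ for every composition $\pi$ of $d$.

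Next, given partitions $\mu \domina \nu$ of $d$, I would apply Theorem~\ref{teor:coefi-dec} with the two skew shapes both equal to this $\sesgala$ and with $\rho = \nu$, $\sigma = \mu$, yielding
\begin{equation*}
{\sf lr}(\Delta_d, \Delta_d; \nu) \ \ge \ {\sf lr}(\Delta_d, \Delta_d; \mu).
\end{equation*}
By Lemma~\ref{lema:lr-delta}, the left-hand side equals $\binom{d}{\nu} d!$ and the right-hand side equals $\binom{d}{\mu} d!$, so dividing by $d!$ gives $\binom{d}{\mu} \le \binom{d}{\nu}$, which is the claim.

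There is essentially no obstacle here: the whole content of the proof is the recognition that the combinatorial machinery of Sections~\ref{sec:multita} and~\ref{sec:evallr}, when specialised to the skew shape $\Delta_d$, packages exactly the classical unimodality-in-dominance property of multinomial coefficients. The only minor point to verify carefully is that $\Delta_d$ does arise as some $\sesgala$ so that Theorem~\ref{teor:coefi-dec} genuinely applies, which the staircase realisation above handles.
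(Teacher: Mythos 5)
Your proof is correct and follows exactly the paper's route: the paper's own proof is the one-line statement that the corollary follows from Theorem~\ref{teor:coefi-dec} and Lemma~\ref{lema:lr-delta}, and you have simply supplied the details (the staircase realisation of $\Delta_d$ and the direction-check in the dominance order) that the paper leaves implicit.
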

\begin{proof}
It follows from Theorem~\ref{teor:coefi-dec} and Lemma~\ref{lema:lr-delta}.
\end{proof}

Next we give formulas for ${\sf lr}(D,D; (1^{|D|}))$, ${\sf lr}(D,D; (|D|))$ and bounds
for ${\sf lr}(D,D; \pi)$.

\begin{lema} \label{lema:estandar}
Let $d$, $n$ be integers such that $n>d\ge 1$ and
let $\la$ be a partition of $n$.
Then
\begin{equation*}
\multiliric {(n-d,1^d)} = \sum_{D \in \eseefe Dd} \bigl( f^D \bigr)^2\, p_D( \diagra C).
\end{equation*}
\end{lema}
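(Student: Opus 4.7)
The plan is to specialize Proposition~\ref{prop:multiliridiag} to the composition $\pi = (n-d, 1^d)$, which has $\ol\pi = (1^d)$. That proposition gives immediately
\[
\multiliric{(n-d, 1^d)} = \sum_{D \in \eseefe Dd} {\sf lr}(D, D; (1^d))\, \diagra D,
\]
so the whole task reduces to identifying ${\sf lr}(D, D; (1^d))$ with $(f^D)^2$, and then invoking Theorem~\ref{teor:elbueno} to replace $\diagra D$ by $p_D(\diagra C)$.

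First I would unfold Definition~\ref{defi:lr}. Since each coordinate of $(1^d)$ equals $1$ and the only partition of $1$ is $(1)$ itself, the sum defining ${\sf lr}(D, D; (1^d))$ collapses to the single term
\[
{\sf lr}(D, D; (1^d)) = \bigl( c^{D}_{(1),\dots,(1)} \bigr)^{\!2},
\]
where the content vector $((1),\dots,(1))$ has $d$ entries. Next I would identify this LR multitableau count with $f^D$: an LR multitableau of shape $\sesgala$ (a representative of $D$) with content $((1),\dots,(1))$ is precisely a chain $\alpha = \la(0) \subset \la(1) \subset \cdots \subset \la(d) = \la$ in which each step adds a single cell (the unique LR tableau of shape a single box and content $(1)$). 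Such a chain is the same data as a standard Young tableau of shape $\sesgala$, giving $c^{D}_{(1),\dots,(1)} = f^{\sesgala} = f^{D}$, where well-definedness on the class $D$ is guaranteed by Corollary~\ref{coro:indep}(3).

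Combining these two observations yields
\[
\multiliric{(n-d, 1^d)} = \sum_{D \in \eseefe Dd} \bigl( f^D \bigr)^{\!2} \diagra D,
\]
and substituting $\diagra D = p_D(\diagra C)$ from Theorem~\ref{teor:elbueno} gives the stated formula. There is no real obstacle here: the content of the lemma is just the confluence of three already-established results, with the only genuine content being the elementary but important observation that an LR multitableau with every partial content equal to $(1)$ is the same thing as a standard Young tableau.
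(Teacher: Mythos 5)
Your proposal is correct and follows exactly the paper's own route: Proposition~\ref{prop:multiliridiag} applied to $\pi=(n-d,1^d)$, the identification of an LR multitableau of type $(1^d)$ with a standard Young tableau (so that ${\sf lr}(D,D;(1^d))=(f^D)^2$), and then Theorem~\ref{teor:elbueno}. The only difference is that you spell out the collapse of the sum in Definition~\ref{defi:lr} and the chain-of-partitions argument, which the paper states in one line.
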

\begin{proof}
By Proposition~\ref{prop:multiliridiag} we have
\begin{equation*}
\multiliric {(n-d,1^d)} = \sum_{D \in \eseefe Dd} {\sf lr}(D,D; (1^d))\, \diagra D.
\end{equation*}
A LR multitableau of type $(1^d)$ is the same as a standard Young tableau.
Therefore ${\sf lr}(D,D; (1^d)) = \bigl( f^D \bigr)^2$.
The claim follows from Theorem~\ref{teor:elbueno}.
\end{proof}

\begin{lema} \label{lema:retratos}
Let $d$, $n$ be integers such that $n/2  \ge d \ge 1$ and
let $\la$ be a partition of $n$.
Then
\begin{equation*}
\multiliric {(n-d,d)} = \sum_{D \in \eseefe Dd}
\raisebox{-.8ex}{\bigg[} \sum_{\alpha\vdash d} \left( c^D_\alpha \right)^2 \raisebox{-.8ex}{\bigg]}
p_D( \diagra C).
\end{equation*}
\end{lema}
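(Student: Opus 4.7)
The plan is to mimic the proof of Lemma~\ref{lema:estandar}, just with the content $(d)$ instead of $(1^d)$. First I would apply Proposition~\ref{prop:multiliridiag} to the composition $\pi = (n-d, d)$. Since $\ol\pi = (d)$ has size exactly $d$, this yields
\[
\multiliric{(n-d,d)} \;=\; \sum_{D\in\eseefe Dd} {\sf lr}(D,D;(d))\, \diagra D.
\]

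Next I would unwind Definition~\ref{defi:lr} for the composition $(d)$, which has length $r=1$. An LR multitableau of shape $D$ and type $(d)$ is simply a single LR tableau of shape $D$ and content some partition $\alpha\vdash d$, and the number of such tableaux is $c^D_\alpha$ (well-defined on the class $D$ by Corollary~\ref{coro:indep}(2)). Consequently
\[
{\sf lr}(D,D;(d)) \;=\; \sum_{\alpha\vdash d} \left( c^D_\alpha \right)^2.
\]

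Finally I would invoke Theorem~\ref{teor:elbueno} to rewrite $\diagra D = p_D(\diagra C)$ for each diagram class $D$ of size $d$, and substitute back to obtain the claimed identity. There is no real obstacle here: the statement is essentially the specialization of Proposition~\ref{prop:multiliridiag} combined with the polynomial expression of Theorem~\ref{teor:elbueno}, and the only computation is the elementary identification of ${\sf lr}(D,D;(d))$ with $\sum_{\alpha\vdash d} (c^D_\alpha)^2$, which follows directly from the definition. The hypothesis $n \ge 2d$ is used only to guarantee that $(n-d,d)$ is a genuine partition so that the quantity on the left-hand side is well-defined and the reduction to size-$d$ skew classes in Proposition~\ref{prop:multiliridiag} applies.
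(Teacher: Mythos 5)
Your proposal is correct and follows exactly the paper's own argument: apply Proposition~\ref{prop:multiliridiag} to $\pi=(n-d,d)$, identify ${\sf lr}(D,D;(d))=\sum_{\alpha\vdash d}(c^D_\alpha)^2$ from Definition~\ref{defi:lr} with $r=1$, and substitute $\diagra D=p_D(\diagra C)$ via Theorem~\ref{teor:elbueno}. You have merely spelled out the middle step that the paper leaves implicit.
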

\begin{proof}
By Proposition~\ref{prop:multiliridiag} we have that
\begin{equation*}
\multiliric {(n-d,d)} =
\sum_{D \in \eseefe Dd} {\sf lr}(D,D; (d))\, \diagra D.
\end{equation*}
The claim follows from Definition~\ref{defi:lr} and Theorem~\ref{teor:elbueno}.
\end{proof}

\begin{coro}
Let $\pi$ be a composition of $d$.
Then, for any $D\in \eseefe Dd$ we have
\begin{equation*}
\sum_{\alpha\vdash d} \left( c^D_\alpha \right)^2 \le {\sf lr}(D,D; \pi) \le \bigl( f^D \bigr)^2.
\end{equation*}
\end{coro}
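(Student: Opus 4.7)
The plan is to derive both inequalities from the monotonicity statement in Theorem~\ref{teor:coefi-dec} together with explicit evaluations at the two extremal partitions $(d)$ and $(1^d)$ of the dominance poset. The whole proof should be short, essentially a packaging of results already in place.

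First, I would invoke the Corollary stated immediately after Lemma~\ref{lema:lr}, which says that ${\sf lr}(D,D;\pi)$ depends only on the multiset of parts of $\pi$. So we may replace $\pi$ by the partition $\mu\vdash d$ obtained by sorting its parts, without changing the value of ${\sf lr}(D,D;\pi)$. Next, I would appeal to the familiar fact that in the dominance order on partitions of $d$ one has $(1^d)\dominada \mu \dominada (d)$ for every $\mu\vdash d$. Applying Theorem~\ref{teor:coefi-dec} (with both skew shapes equal to a fixed representative $\sesgala$ of $D$) therefore yields
\begin{equation*}
{\sf lr}(D,D;(d)) \le {\sf lr}(D,D;\mu) \le {\sf lr}(D,D;(1^d)).
\end{equation*}

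Finally, the two endpoints are read off directly from Definition~\ref{defi:lr}. Taking $r=1$ and $\pi=(d)$, the defining sum collapses to $\sum_{\alpha\vdash d}(c^D_\alpha)^2$, giving the lower bound. Taking $\pi=(1^d)$, every $\rho(i)$ must equal $(1)$, and the LR multitableaux of this type are precisely the standard Young tableaux of shape $D$, so $c^D_{\ro1r}=f^D$ and hence ${\sf lr}(D,D;(1^d))=(f^D)^2$; this last computation was already carried out in the proof of Lemma~\ref{lema:estandar}. There is no serious obstacle: the only conceptual point is recognising that the bounds in the statement are exactly the extremal values of ${\sf lr}(D,D;\cdot)$ on the dominance poset of partitions of $d$, after which the proof reduces to citing the reordering corollary, Theorem~\ref{teor:coefi-dec}, and Definition~\ref{defi:lr}.
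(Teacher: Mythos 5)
Your proposal is correct and follows exactly the paper's route: the paper's proof is the one-line citation of Theorem~\ref{teor:coefi-dec} together with the endpoint evaluations ${\sf lr}(D,D;(d))=\sum_{\alpha\vdash d}(c^D_\alpha)^2$ and ${\sf lr}(D,D;(1^d))=(f^D)^2$ carried out in the proofs of Lemmas~\ref{lema:retratos} and~\ref{lema:estandar}. Your write-up merely makes explicit the reordering step and the fact that $(1^d)\dominada\mu\dominada(d)$, which the paper leaves implicit.
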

\begin{proof}
It follows from the proofs of the Lemmas~\ref{lema:estandar} and~\ref{lema:retratos}
and Theorem~\ref{teor:coefi-dec}.
\end{proof}

We next illustrate Theorem~\ref{teor:poliliritirabo} by giving the
evaluations of $\multiliric \pi$ for all partitions $\pi$
satisfying $0\le \prof\pi \le 4$.
For the sake of clarity we organize the summands of each
evaluation according the following rules:

(1) We write, as in Proposition~\ref{prop:multiliridiag}, a summand
for each diagram class $D$ of size $\prof\pi$.

(2) If $D$ is not connected, we write $\diagra D$ as $p_D(\diagra C)$.
The formulas we need can be obtained from Lemma~\ref{lema:binom} and
Examples~\ref{ejem:reduc}.

(3) Since, by Lemma~\ref{lema:transp}, both $\diagra D$ and $\diagra{D^\prime}$
have the same coefficient in equation~\eqref{ecua:multiliri-diag}, we group them together.

For $\prof\pi =3$ we order the summands of $\multiliric \pi$ according to the
following list of the 7 diagram classes of size 3.
The first 4 are connected, the remaining 3 are non-connected:
\begin{equation*}
\tablath\, ,\
\raisebox{1.30ex}{\tablatv} \, ;\
\raisebox{.65ex}{\tablatdu}\, ;\
\raisebox{.65ex}{\tablatud}\, ; \
\tabladh \sqcup \tablau \, ,\
\raisebox{.65ex}{\tabladv} \sqcup \tablau\, ;\
\tablatu\, .
\end{equation*}

For $\prof\pi=4$ we order the summands of $\multiliric \pi$ according to the
following list of the 19 diagram classes of size 4.
The first 9 are connected, the remaining 10 are non-connected:
\begin{equation*}
\tablacvc\, ,\ \tablacvuuuu\, ;\ \tablacvtu\, ,\ \tablacvduu\, ;\
\tablacvdd\, ;\
\tablacsut\, ,\ \tablacsuud\, ;\ \tablacsedd\, ,\
\tablacseudu\, .
\end{equation*}
\begin{equation*}
\tablath\sqcup\tablau\, ,\
{\raisebox{1.30ex}{\tablatv}}\sqcup\tablau\, ;\
{\raisebox{.65ex}{\tablatdu}}\sqcup\tablau\, ;\
{\raisebox{.65ex}{\tablatud}}\sqcup\tablau\, ;\
\tablacdhdh\, ,\
{\raisebox{.65ex}{\tabladv}} \sqcup {\raisebox{.65ex}{\tabladv}}\, ;\
\tablacdhdv\, ;
\end{equation*}
\begin{equation*}
\tabladh \sqcup \tablau \sqcup \tablau\, ,\
{\raisebox{.65ex}{\tabladv}} \sqcup \tablau \sqcup \tablau\, ;\
\tablacu\, .
\end{equation*}

Observe that, by Theorem~\ref{teor:coefi-dec},
for the different $\pi$'s of the same depth, the numbers ${\sf lr}(D,D; {\ol\pi})$
grows as $\ol\pi$ decreases in the dominance order.

\begin{evaliri} \label{para:evaliri}
{\em
The evaluations $q_{\ol\nu}(\diagra C)$ for all partitions $\ol\nu$ of size at
most 4 are:
\begin{enumerate}

\item
$\multiliric {(n)} = 1$.

\item
$\multiliric {(n-1,1)} = \diagra{\,\tablau\,}$.

\item
$\multiliric {(n-2,2)} = \diagra{\,\tabladh\,} +
\diagra{\, {\raisebox{.65ex}{\tabladv}} \,} +
2\dbinom{\diagra{\,\tablau\,}}{2}$.

\item
$\multiliric {(n-2,1^2)} = \diagra{\,\tabladh\,} +
\diagra{\, {\raisebox{.65ex}{\tabladv}} \,} +
4\dbinom{\diagra{\,\tablau\,}}{2}$.

\item
$\multiliric {(n-3,3)} = \diagra{\,\tablath\,} +
\diagra{\, {\raisebox{1.30ex}{\tablatv}} \,} +
\diagra{\, {\raisebox{.65ex}{\tablatdu}} \,} +
\diagra{\, {\raisebox{.65ex}{\tablatud}} \,}$\\[.1cm]
\indent
$ \qquad \qquad \qquad
+\, 2\left[ \diagra{\,\tablau\,} -1 \right]
\left[ \diagra{\,\tabladh\,} +
\diagra{\, {\raisebox{.65ex}{\tabladv}} \,} \right] +
6 \dbinom{\diagra{\,\tablau\,}}{3} $.

\item
$\multiliric {(n-3,2,1)} =  \diagra{\,\tablath\,} +
\diagra{\, {\raisebox{1.30ex}{\tablatv}} \,} +
2 \left[
\diagra{\, {\raisebox{.65ex}{\tablatdu}} \,} +
\diagra{\, {\raisebox{.65ex}{\tablatud}} \,}
\right]$\\[.1cm]
\indent
$ \qquad \qquad \qquad
+\, 5\left[ \diagra{\,\tablau\,} -1 \right]
\left[ \diagra{\,\tabladh\,} +
\diagra{\, {\raisebox{.65ex}{\tabladv}} \,} \right] +
18 \dbinom{\diagra{\,\tablau\,}}{3} $.

\item
$\multiliric {(n-3,1^3)} =   \diagra{\,\tablath\,} +
\diagra{\, {\raisebox{1.30ex}{\tablatv}} \,} +
4 \left[
\diagra{\, {\raisebox{.65ex}{\tablatdu}} \,} +
\diagra{\, {\raisebox{.65ex}{\tablatud}} \,}
\right]$\\[.1cm]
\indent
$ \qquad \qquad \qquad
+\, 9\left[ \diagra{\,\tablau\,} -1 \right]
\left[ \diagra{\,\tabladh\,} +
\diagra{\, {\raisebox{.65ex}{\tabladv}} \,} \right] +
36 \dbinom{\diagra{\,\tablau\,}}{3} $.

\item
$\multiliric {(n-4,4)} = \phantom{5} $\\
\indent
$ \qquad \qquad \qquad
\diagra{\, \tablacvc \,} + \diagra{\, \tablacvuuuu\,}
+ \diagra{\, \tablacvtu\,}
+ \diagra{\, \tablacvduu\,}
+ \diagra{\, \tablacvdd\,}  $\\[.1cm]
\indent
$ \qquad \qquad \qquad
+\,  \diagra{\, \tablacsut\,}
+ \diagra{\, \tablacsuud\,}
+ 2 \left[ \diagra{\, \tablacsedd\,}
+ \diagra{\, \tablacseudu\,}\right] $
\\[.1cm]
\indent
$ \qquad \qquad \qquad
+\, 2 \left[ \diagra{\,\tablau\,} -1 \right]
\left[ \diagra{\,\tablath\,} +
\diagra{\, \raisebox{1.3ex}{\tablatv}\,} \right]
$\\[.1cm]
\indent
$ \qquad \qquad \qquad
+\, 3 \left[ \diagra{\,\tablau\,} -2  \right]
\diagra{\, \raisebox{.65ex}{\tablatdu}\,}
+ 3 \left[ \diagra{\,\tablau\,} -1  \right]
\diagra{\, \raisebox{.65ex}{\tablatud}\,}
$\\[.1cm]
\indent
$ \qquad \qquad \qquad
+\, 3 \left[ \dbinom{\diagra{\,\tabladh\,}}{2}
+ \dbinom{\diagra{\, \raisebox{.65ex}{\tabladv}\,}}{2} \right]
+ 2 \left[ \diagra{\, \tabladh\,} \,
\diagra{\, \raisebox{.65ex}{\tabladv} \,}
- \diagra{\, \raisebox{.65ex}{\tablatud} \,} \right]
$\\[.1cm]
\indent
$ \qquad \qquad \qquad
+\, 7  \dbinom{\diagra{\,\tablau\,} - 1}{2}
\left[ \diagra{\,\tabladh\,} +
\diagra{\, {\raisebox{.65ex}{\tabladv}} \,} \right]
+ 24 \dbinom{\diagra{\,\tablau\,}}{4}$.

\item
$\multiliric {(n-4,3,1)} = \phantom{5} $\\
\indent
$ \qquad \qquad \qquad
\diagra{\, \tablacvc \,} + \diagra{\, \tablacvuuuu\,}
+ 2\left[ \diagra{\, \tablacvtu\,}
+ \diagra{\, \tablacvduu\,} \right]
+ \diagra{\, \tablacvdd\,}  $\\[.1cm]
\indent
$ \qquad \qquad \qquad
+\, 2 \left[  \diagra{\, \tablacsut\,}
+ \diagra{\, \tablacsuud\,} \right]
+ 5 \left[ \diagra{\, \tablacsedd\,}
+ \diagra{\, \tablacseudu\,} \right] $
\\[.1cm]
\indent
$ \qquad \qquad \qquad
+\, 5 \left[ \diagra{\,\tablau\,} -1 \right]
\left[ \diagra{\,\tablath\,} +
\diagra{\, \raisebox{1.3ex}{\tablatv}\,} \right]
$\\[.1cm]
\indent
$ \qquad \qquad \qquad
+\, 11 \left[ \diagra{\,\tablau\,} -2  \right]
\diagra{\, \raisebox{.65ex}{\tablatdu}\,}
+ 11 \left[ \diagra{\,\tablau\,} -1  \right]
\diagra{\, \raisebox{.65ex}{\tablatud}\,}
$\\[.1cm]
\indent
$ \qquad \qquad \qquad
+\, 8 \left[ \dbinom{\diagra{\,\tabladh\,}}{2}
+ \dbinom{\diagra{\, \raisebox{.65ex}{\tabladv}\,}}{2} \right]
+ 6 \left[ \diagra{\, \tabladh\,} \,
\diagra{\, \raisebox{.65ex}{\tabladv} \,}
- \diagra{\, \raisebox{.65ex}{\tablatud} \,} \right]
$\\[.1cm]
\indent
$ \qquad \qquad \qquad
+\, 26 \dbinom{\diagra{\,\tablau\,} - 1}{2}
\left[ \diagra{\,\tabladh\,} +
\diagra{\, {\raisebox{.65ex}{\tabladv}} \,} \right]
+ 96 \dbinom{\diagra{\,\tablau\,}}{4}$.

\item
$\multiliric {(n-4,2,2)} =  \phantom{5} $\\
\indent
$ \qquad \qquad \qquad
\diagra{\, \tablacvc \,} + \diagra{\, \tablacvuuuu\,}
+ 3\left[ \diagra{\, \tablacvtu\,}
+ \diagra{\, \tablacvduu\,} \right]
+ 2\diagra{\, \tablacvdd\,}  $\\[.1cm]
\indent
$ \qquad \qquad \qquad
+\, 3 \left[  \diagra{\, \tablacsut\,}
+ \diagra{\, \tablacsuud\,} \right]
+ 7 \left[ \diagra{\, \tablacsedd\,}
+ \diagra{\, \tablacseudu\,} \right] $
\\[.1cm]
\indent
$ \qquad \qquad \qquad
+\, 6 \left[ \diagra{\,\tablau\,} -1 \right]
\left[ \diagra{\,\tablath\,} +
\diagra{\, \raisebox{1.3ex}{\tablatv}\,} \right]
$\\[.1cm]
\indent
$ \qquad \qquad \qquad
+\, 16 \left[ \diagra{\,\tablau\,} -2  \right]
\diagra{\, \raisebox{.65ex}{\tablatdu}\,}
+ 16 \left[ \diagra{\,\tablau\,} -1  \right]
\diagra{\, \raisebox{.65ex}{\tablatud}\,}
$\\[.1cm]
\indent
$ \qquad \qquad \qquad
+\, 12 \left[ \dbinom{\diagra{\,\tabladh\,}}{2}
+ \dbinom{\diagra{\, \raisebox{.65ex}{\tabladv}\,}}{2} \right]
+ 10 \left[ \diagra{\, \tabladh\,} \,
\diagra{\, \raisebox{.65ex}{\tabladv} \,}
- \diagra{\, \raisebox{.65ex}{\tablatud} \,} \right]
$\\[.1cm]
\indent
$ \qquad \qquad \qquad
+\, 38 \dbinom{\diagra{\,\tablau\,} - 1}{2}
\left[ \diagra{\,\tabladh\,} +
\diagra{\, {\raisebox{.65ex}{\tabladv}} \,} \right]
+ 144 \dbinom{\diagra{\,\tablau\,}}{4}$.

\item
$\multiliric {(n-4,2,1^2)} = \phantom{5} $\\
\indent
$ \qquad \qquad \qquad
\diagra{\, \tablacvc \,} + \diagra{\, \tablacvuuuu\,}
+ 5\left[ \diagra{\, \tablacvtu\,}
+ \diagra{\, \tablacvduu\,} \right]
+ 2\diagra{\, \tablacvdd\,}  $\\[.1cm]
\indent
$ \qquad \qquad \qquad
+\, 5 \left[  \diagra{\, \tablacsut\,}
+ \diagra{\, \tablacsuud\,} \right]
+ 13 \left[ \diagra{\, \tablacsedd\,}
+ \diagra{\, \tablacseudu\,} \right] $
\\[.1cm]
\indent
$ \qquad \qquad \qquad
+\, 10 \left[ \diagra{\,\tablau\,} -1 \right]
\left[ \diagra{\,\tablath\,} +
\diagra{\, \raisebox{1.3ex}{\tablatv}\,} \right]
$\\[.1cm]
\indent
$ \qquad \qquad \qquad
+\, 32 \left[ \diagra{\,\tablau\,} -2  \right]
\diagra{\, \raisebox{.65ex}{\tablatdu}\,}
+ 32 \left[ \diagra{\,\tablau\,} -1  \right]
\diagra{\, \raisebox{.65ex}{\tablatud}\,}
$\\[.1cm]
\indent
$ \qquad \qquad \qquad
+\, 20 \left[ \dbinom{\diagra{\,\tabladh\,}}{2}
+ \dbinom{\diagra{\, \raisebox{.65ex}{\tabladv}\,}}{2} \right]
+ 18 \left[ \diagra{\, \tabladh\,} \,
\diagra{\, \raisebox{.65ex}{\tabladv} \,}
- \diagra{\, \raisebox{.65ex}{\tablatud} \,} \right]
$\\[.1cm]
\indent
$ \qquad \qquad \qquad
+\, 74 \dbinom{\diagra{\,\tablau\,} - 1}{2}
\left[ \diagra{\,\tabladh\,} +
\diagra{\, {\raisebox{.65ex}{\tabladv}} \,} \right]
+ 288 \dbinom{\diagra{\,\tablau\,}}{4}$.

\item
$\multiliric {(n-4,1^4)} =
 \phantom{5} $\\
\indent
$ \qquad \qquad \qquad
\diagra{\, \tablacvc \,} + \diagra{\, \tablacvuuuu\,}
+ 9\left[ \diagra{\, \tablacvtu\,}
+ \diagra{\, \tablacvduu\,} \right]
+ 4\diagra{\, \tablacvdd\,}  $\\[.1cm]
\indent
$ \qquad \qquad \qquad
+\, 9 \left[  \diagra{\, \tablacsut\,}
+ \diagra{\, \tablacsuud\,} \right]
+ 25 \left[ \diagra{\, \tablacsedd\,}
+ \diagra{\, \tablacseudu\,} \right] $
\\[.1cm]
\indent
$ \qquad \qquad \qquad
+\, 16 \left[ \diagra{\,\tablau\,} -1 \right]
\left[ \diagra{\,\tablath\,} +
\diagra{\, \raisebox{1.3ex}{\tablatv}\,} \right]
$\\[.1cm]
\indent
$ \qquad \qquad \qquad
+\, 64 \left[ \diagra{\,\tablau\,} -2  \right]
\diagra{\, \raisebox{.65ex}{\tablatdu}\,}
+ 64 \left[ \diagra{\,\tablau\,} -1  \right]
\diagra{\, \raisebox{.65ex}{\tablatud}\,}
$\\[.1cm]
\indent
$ \qquad \qquad \qquad
+\, 36 \left[ \dbinom{\diagra{\,\tabladh\,}}{2}
+ \dbinom{\diagra{\, \raisebox{.65ex}{\tabladv}\,}}{2} \right]
+ 36 \left[ \diagra{\, \tabladh\,} \,
\diagra{\, \raisebox{.65ex}{\tabladv} \,}
- \diagra{\, \raisebox{.65ex}{\tablatud} \,} \right]
$\\[.1cm]
\indent
$ \qquad \qquad \qquad
+\, 144 \dbinom{\diagra{\,\tablau\,} - 1}{2}
\left[ \diagra{\,\tabladh\,} +
\diagra{\, {\raisebox{.65ex}{\tabladv}} \,} \right]
+ 576 \dbinom{\diagra{\,\tablau\,}}{4}$.
\end{enumerate}
}
\end{evaliri}

\begin{obses}
{\em
(1) We think of the binomial coefficient $\binom{t}{n}$ as a polynomial in $t$:
\begin{equation*}
\binom{t}{n} = \frac{t(t-1) \cdots (t-n+1)}{n!}.
\end{equation*}

(2) The twelve polynomials above, with the sole exception of the polynomial
from example~8, have integer coefficients.
}
\end{obses}

\section{The Kronecker coefficients {\mathversion{bold}$\ccuad \nu$}}
\label{sec:evalk}

This section is the core of the paper.
Theorem~\ref{teor:kron-combin} is an enhancement of the RT method that gives a
closed combinatorial formula (up to signs) of Kronecker coefficients.
We will show its utility in Sections~\ref{sec:saxl} and~\ref{sec:estab}.
Theorem~\ref{teor:polikrontirabo} shows a new phenomenon of Kronecker coefficients,
namely, that each coefficient of the form $\ccuad{(n-d,\ol\nu)}$
can be computed by the evaluation of a polynomial $k_{\ol\nu}$ in as many
variables as connected diagram classes $C$ of size $1 \le |C| \le |\ol\nu|$.
Theorem~\ref{teor:tiras-borde-kron} shows that the polynomial $k_{\ol\nu}$
can be changed to a polynomial $\wt k_{\ol\nu}$ in variables $t_B$, one for each
border strip class $B$ of size $1 \le |B| \le |\ol\nu|$.
Also an evaluation of $\wt k_{\ol\nu}$ yields $\ccuad{(n-d,\ol\nu)}$
for all partitions $\la$ of some integer $n \ge \nu_2 + |\ol\nu|$.
Several results about these polynomials are presented.
In Paragraph~\ref{para:evakron} we compute the polynomials $k_{\ol\nu}$ for all
$|\ol\nu| \le 4$.

\begin{teor} \label{teor:kron-combin}
Let $\ol\nu = (\nu_2, \dots, \nu_r)$ be a partition of $d$ and
$n\ge d + \nu_2$.
Then, for any partition $\la$ of $n$, we have
\begin{equation} \label{ecua:kron-merabuena}
{\sf g}(\la,\la, (n-d,\ol\nu)) = \sum_{k=0}^d \sum_{D \in \eseefe Dk}
\sum_{ \substack{T\in \tirabo {\wt\nu} \\ e(T) = |D|}} \signo T\, {\sf lr}(D,D; \ol\tau(T))\, \diagra D.
\end{equation}
\end{teor}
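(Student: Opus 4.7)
The plan is to deduce the identity by plugging Proposition~\ref{prop:multiliridiag} into Proposition~\ref{prop:robtau} (with $\mu=\la$) and interchanging sums. Concretely, Proposition~\ref{prop:robtau} gives
\[
{\sf g}(\la,\la,(n-d,\ol\nu)) = \sum_{T\in\tirabo{\wt\nu}} \signo T\,\multiliric{\tau(B_n(T))},
\]
so the first task is to rewrite each $\multiliric{\tau(B_n(T))}$ in terms of $\la$-removable diagrams.

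For that, I would use the construction of $B_n$ in Lemma~\ref{lema:biyec-tiraborde}: the bijection only enlarges the border strip through $(1,\nu_2)$ of $\wt\nu$ to the border strip through $(1,n-d)$ of $\ol\nu(n)$, leaving the remaining border strips intact. Hence $\tau(B_n(T))=\tau(T)+(n-d-\nu_2,0,\dots,0)$, and consequently $\ol\tau(B_n(T))=\ol\tau(T)$ while $|\ol\tau(B_n(T))|=|\ol\tau(T)|=e(T)$. Applying Proposition~\ref{prop:multiliridiag} with $\pi=\tau(B_n(T))$ (so that the ``$d$'' there equals $e(T)$) yields
\[
\multiliric{\tau(B_n(T))} = \sum_{D\in\eseefe D{e(T)}} {\sf lr}(D,D;\ol\tau(T))\,\diagra D.
\]
Substituting this into the formula from Proposition~\ref{prop:robtau} and interchanging the order of summation by binning each $T$ according to $k=e(T)$ and then by $D\in\eseefe Dk$ gives exactly the right-hand side of~\eqref{ecua:kron-merabuena}.

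The only point requiring extra verification is that $e(T)\le d$ for every $T\in\tirabo{\wt\nu}$, so that the outer index in the theorem may run only up to $d$. Since $|\wt\nu|=d+\nu_2$, this amounts to showing that the size $t_1$ of the special border strip of $T$ containing $(1,\nu_2)$ satisfies $t_1\ge\nu_2$. The argument is that a border strip is a ribbon occupying one cell on each diagonal of a contiguous range; the cell $(1,\nu_2)$ lies on diagonal $\nu_2-1$, while a cell of the same strip in column $1$ lies on diagonal $1-i\le 0$. Thus the strip covers at least the diagonals $0,1,\dots,\nu_2-1$, so $t_1\ge\nu_2$. No further obstacles arise: the identity is then just the reorganization of a double sum, and the $k=0$ term is handled by the conventions $\diagra{\vacio}=1$, ${\sf lr}(\vacio,\vacio;())=1$.
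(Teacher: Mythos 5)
Your proof is correct and follows the paper's own argument essentially verbatim: apply Proposition~\ref{prop:robtau}, rewrite each $\multiliric{\tau(B_n(T))}$ via Proposition~\ref{prop:multiliridiag} using $\ol\tau(B_n(T))=\ol\tau(T)$ from Lemma~\ref{lema:biyec-tiraborde}, and regroup the double sum by $k=e(T)\le d$. Your explicit diagonal-counting verification that $t_1\ge\nu_2$ (hence $e(T)\le d$) is a detail the paper simply cites from Definition~\ref{defi:nuevo-cont}, and it is a correct justification.
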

\begin{proof}
By Proposition~\ref{prop:robtau} we have
\begin{equation*}
{\sf g}(\la,\la, (n-d,\ol\nu)) =
\sum_{T\in \tirabo{\wt\nu}} {\sf sign}(T)\, \multiliric {\tau(B_n(T))} .
\end{equation*}
Recall that for each $T \in \tirabo{\wt\nu}$, one has $e(T)=|\ol\tau(T)| \le d$
(Definition~\ref{defi:nuevo-cont}).
Since, by Lemma~\ref{lema:biyec-tiraborde}, $\ol\tau(B_n(T)) = \ol\tau(T)$,
it follows from Proposition~\ref{prop:multiliridiag} that
\begin{equation*}
\multiliric{\tau(B_n(T))} = \sum_{D \in \eseefe D{e(T)}} {\sf lr}(D,D;\ol\tau(T))\, \diagra D .
\end{equation*}
Since $e(T) \le d$, the theorem follows from the two previous identities.
\end{proof}

\begin{teor} \label{teor:polikrontirabo}
Let $\ol\nu$ be a partition of $d$.
Then there exists a polynomial with rational coefficients $k_{\ol\nu}(x_C)$
in the variables $x_C$, where $C$ runs over the set of connected
diagram classes of size $1 \le |C| \le d$, such that, for all $n \ge d + \nu_2$
and all partitions $\la$ of $n$, the Kronecker coefficient $\ccuad {(n-d, \ol\nu)}$
is obtained from $k_{\ol\nu}(x_C)$ by evaluating each $x_C$ at $\diagra C$,
that is,
\begin{equation*}
\ccuad {(n-d, \ol\nu)} = k_{\ol\nu}(\diagra C).
\end{equation*}
\end{teor}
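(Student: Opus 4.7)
The plan is to derive Theorem~\ref{teor:polikrontirabo} by pasting together the two tools already assembled in the preceding sections: the combinatorial formula of Theorem~\ref{teor:kron-combin} and the polynomiality of $\la$-removable diagram counts from Theorem~\ref{teor:elbueno}.

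First I would write down the formula from Theorem~\ref{teor:kron-combin}: for $n \ge d+\nu_2$,
\begin{equation*}
\ccuad{(n-d,\ol\nu)} = \sum_{k=0}^{d} \sum_{D\in \eseefe Dk} \sum_{\substack{T\in \tirabo{\wt\nu} \\ e(T)=|D|}} \signo T\, {\sf lr}(D,D;\ol\tau(T))\, \diagra D,
\end{equation*}
and observe that every ingredient on the right-hand side except the factor $\diagra D$ is manufactured from $\ol\nu$ alone: the indexing set $\tirabo{\wt\nu}$ depends only on $\wt\nu$, the signs $\signo T$ and the vectors $\ol\tau(T)$ are intrinsic to $T\in\tirabo{\wt\nu}$, and the numbers ${\sf lr}(D,D;\ol\tau(T))$ depend only on $D$ and $\ol\tau(T)$. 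In particular $n$ has disappeared from all coefficients; it enters the right-hand side only through the evaluations $\diagra D$.

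Next I would apply Theorem~\ref{teor:elbueno} to each diagram class $D$ appearing in the sum, yielding a polynomial $p_D(x_C)\in\racional[x_C]$, where $C$ ranges over connected diagram classes of size $1\le |C|\le |D|\le d$, such that $\diagra D = p_D(\diagra C)$. Invoking Remark~\ref{obse:polis} to view all these $p_D$'s as elements of the common polynomial ring $\racional[x_C: C\text{ connected},\ 1\le |C|\le d]$, I then define
\begin{equation*}
k_{\ol\nu}(x_C) = \sum_{k=0}^{d} \sum_{D\in \eseefe Dk} \sum_{\substack{T\in \tirabo{\wt\nu} \\ e(T)=|D|}} \signo T\, {\sf lr}(D,D;\ol\tau(T))\, p_D(x_C).
\end{equation*}
Substituting $\diagra D = p_D(\diagra C)$ into the displayed formula for $\ccuad{(n-d,\ol\nu)}$ gives exactly $\ccuad{(n-d,\ol\nu)} = k_{\ol\nu}(\diagra C)$, as required. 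Since all coefficients and the polynomial $p_D$ are built from $\ol\nu$ only, the same $k_{\ol\nu}$ works for every $n\ge d+\nu_2$ and every $\la\vdash n$.

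The only non-routine step is verifying that the formula is genuinely $n$-independent, i.e.\ that passing from $T\in\tirabo{\wt\nu}$ to $B_n(T)\in\tirabo{\ol\nu(n)}$ does not disturb the coefficient extracted from $\multiliric{\tau(B_n(T))}$. This is precisely where Lemma~\ref{lema:biyec-tiraborde} and Proposition~\ref{prop:multiliridiag} combine: the former gives $\ol\tau(B_n(T))=\ol\tau(T)$, and the latter expands $\multiliric{\pi}$ into diagram classes of size $|\ol\pi|$ using only the tail $\ol\pi$ of the composition, so the inflated first entry $n-d-\nu_2$ contributed by $B_n$ is irrelevant. Once this observation is made explicit, the remainder of the argument is bookkeeping, and the fact that only diagram classes with $|D|\le d$ appear (since $e(T)\le d$) ensures that $k_{\ol\nu}$ lives in the stated polynomial ring.
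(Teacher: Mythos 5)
Your proposal is correct and matches the paper's own proof: the paper defines $k_{\ol\nu}(x_C)$ by exactly the same substitution of $p_D(x_C)$ for $\diagra D$ in the formula of Theorem~\ref{teor:kron-combin} and then cites Theorems~\ref{teor:elbueno} and~\ref{teor:kron-combin}. Your extra discussion of $n$-independence is sound but is already packaged inside Theorem~\ref{teor:kron-combin}, whose proof invokes Lemma~\ref{lema:biyec-tiraborde} and Proposition~\ref{prop:multiliridiag} in just the way you describe.
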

\begin{proof}
Let
\begin{equation*}
k_{\ol\nu}(x_C) =
\sum_{ D, \, |D| \le d}\, \sum_{ \substack{T\in \tirabo {\wt\nu} \\ e(T) = |D|}}
\signo T\, {\sf lr}(D,D; \ol\tau(T))\, p_D(x_C).
\end{equation*}
Then, the proof follows from Theorems~\ref{teor:elbueno} and~\ref{teor:kron-combin}.
\end{proof}

Alternatively, the polynomial $k_{\ol\nu}$ can be defined from the polynomials $q_{\ol\pi}$
defined in the proof of Theorem~\ref{teor:poliliritirabo}.

\begin{lema} \label{lema:todo-pol}
Let $\ol\nu$ be a partition.
Then
\begin{equation*}
k_{\ol\nu}(x_C) = \sum_{T\in \tirabo{\wt\nu}} {\sf sign}(T)\, q_{\ol\tau(T)}(x_C).
\end{equation*}
\end{lema}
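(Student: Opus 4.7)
The plan is to expand the right-hand side using the explicit formula for $q_{\ol\pi}$ derived inside the proof of Theorem~\ref{teor:poliliritirabo}, and then interchange the order of summation to recover the definition of $k_{\ol\nu}$ given inside the proof of Theorem~\ref{teor:polikrontirabo}. In essence the lemma repackages the same double sum in two different ways: either one first sums over the border strip tableaux $T \in \tirabo{\wt\nu}$ and then over the diagram classes $D$ appearing in the composition $\ol\tau(T)$, or one first fixes $D$ and sums over the $T$'s producing it.

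Concretely, I would recall from the proof of Theorem~\ref{teor:poliliritirabo} that for any composition $\pi$ one has
\[
q_{\ol\pi}(x_C) \;=\; \sum_{D\in \eseefe D{|\ol\pi|}} {\sf lr}(D,D;\ol\pi)\, p_D(x_C),
\]
and from the proof of Theorem~\ref{teor:polikrontirabo} that
\[
k_{\ol\nu}(x_C) \;=\; \sum_{k=0}^d\, \sum_{D\in \eseefe Dk}\, \sum_{\substack{T\in \tirabo{\wt\nu}\\ e(T)=k}} \signo T\, {\sf lr}(D,D;\ol\tau(T))\, p_D(x_C),
\]
where $d=|\ol\nu|$. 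Applying the first identity with $\pi = \tau(T)$, so that $\ol\pi = \ol\tau(T)$ and $|\ol\pi| = e(T)$, and substituting into the right-hand side of the lemma, I would then swap the two sums by grouping tableaux according to $k=e(T)$ to obtain exactly the expression for $k_{\ol\nu}(x_C)$ above.

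For the swap to yield precisely the stated expression for $k_{\ol\nu}$, I need to verify that $e(T)\le d$ for every $T\in \tirabo{\wt\nu}$; this follows from $|\wt\nu|=d+\nu_2$ together with the observation that the special border strip of $T$ containing the square $(1,\nu_2)$ has size $t_1 \ge \nu_2$, whence $e(T)=|\wt\nu|-t_1\le d$. The main obstacle is therefore purely bookkeeping: to identify the two indexings correctly one must remember that $\tau(T)$ is in general only a composition, so it is essential that Theorem~\ref{teor:poliliritirabo} was formulated for arbitrary compositions rather than for partitions alone. Once this point is kept in mind, the lemma reduces to a single interchange of summations with no additional combinatorial input.
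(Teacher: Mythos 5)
Your proposal is correct and follows the same route as the paper, which simply cites the explicit constructions of $q_{\ol\pi}$ and $k_{\ol\nu}$ in the proofs of Theorems~\ref{teor:poliliritirabo} and~\ref{teor:polikrontirabo}; you have merely written out the interchange of summations that the paper leaves implicit. The two details you flag --- that $e(T)\le d$ for all $T\in\tirabo{\wt\nu}$ and that $q$ must be indexed by compositions rather than partitions --- are exactly the right points to verify and are consistent with the paper's remarks.
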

\begin{proof}
This follows from
Theorems~\ref{teor:poliliritirabo} and~\ref{teor:polikrontirabo}.
\end{proof}

Theorem~\ref{teor:polikrontirabo} can be restated in the following way:

\begin{teor} \label{teor:tiras-borde-kron}
Let $\ol\nu$ be a partition of $d$.
Then there exists a polynomial with rational coefficients $\wt k_{\ol\nu}(t_B)$
in the variables $t_B$, where $B$ runs over the set of border strip
classes of size $1 \le |B| \le d$, such that, for all $n \ge d + \nu_2$
and all partitions $\la$ of $n$, the Kronecker coefficient $\ccuad {(n-d, \ol\nu)}$
is obtained from $\wt k_{\ol\nu}(t_B)$ by evaluating each $t_B$ at $\diagra B$,
that is,
\begin{equation*}
\ccuad {(n-d, \ol\nu)} = \wt k_{\ol\nu}(\diagra B).
\end{equation*}
\end{teor}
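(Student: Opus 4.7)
The plan is to obtain $\wt k_{\ol\nu}(t_B)$ from $k_{\ol\nu}(x_C)$ by a single substitution of variables, in exact analogy with Remark~\ref{obse:tiras-borde-liri} (which proves the corresponding statement for $\multiliric \pi$) and Remark~\ref{obse:tiras-borde} (which proves it for $p_D$). Specifically, for each connected diagram class $C$ of size $1 \le |C| \le d$, I would replace the variable $x_C$ in $k_{\ol\nu}(x_C)$ by the variable $t_{B(C)}$, where $B(C)$ is the principal border strip class of $C$ as defined in Paragraphs~\ref{para:young} and~\ref{para:clase-diag}. Since several distinct connected classes may share the same principal border strip, the resulting polynomial $\wt k_{\ol\nu}(t_B)$ naturally lives in a polynomial ring whose variables $t_B$ are indexed by border strip classes $B$ of size $1 \le |B| \le d$ (note $|B(C)| \le |C| \le d$).

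To verify the evaluation identity, I would invoke Lemma~\ref{lema:redu-tirabo}, which asserts that $\diagra C = \diagra {B(C)}$ for every connected diagram class $C$ and every partition $\la$. Consequently, specialising each $x_C$ at $\diagra C$ coincides with the two-step specialisation $x_C \mapsto t_{B(C)} \mapsto \diagra {B(C)} = \diagra C$. Combining this with Theorem~\ref{teor:polikrontirabo} gives
\[
\wt k_{\ol\nu}(\diagra B) \;=\; k_{\ol\nu}(\diagra C) \;=\; \ccuad{(n-d,\ol\nu)}
\]
for all $n \ge d + \nu_2$ and all partitions $\la \vdash n$, which is exactly the claim. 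Rationality of the coefficients is preserved by the substitution since it is nothing more than a relabelling of variables.

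There is no genuine obstacle: the theorem is a direct repackaging of Theorem~\ref{teor:polikrontirabo} through the identity $\diagra C = \diagra{B(C)}$. The only mildly delicate point is bookkeeping, namely, checking that the substitution $x_C \mapsto t_{B(C)}$ is well defined as a ring homomorphism into the polynomial ring generated by the $t_B$ with $1 \le |B| \le d$, and that the evaluation map $t_B \mapsto \diagra B$ is compatible with $x_C \mapsto \diagra C$ for every $\la$ simultaneously. Both facts follow directly from Lemma~\ref{lema:redu-tirabo}, so the entire argument reduces to a one-line verification once $\wt k_{\ol\nu}$ is defined.
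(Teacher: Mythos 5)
Your proposal is correct and matches the paper's own proof in all essentials: the paper also defines $\wt k_{\ol\nu}(t_B)$ by substituting each $x_C$ with $t_{B(C)}$ and then verifies the evaluation identity, differing only in that it routes the verification through Lemma~\ref{lema:todo-pol} and Remark~\ref{obse:tiras-borde-liri} rather than citing Lemma~\ref{lema:redu-tirabo} and Theorem~\ref{teor:polikrontirabo} directly. Since Remark~\ref{obse:tiras-borde-liri} itself rests on the identity $\diagra C = \diagra{B(C)}$, the two arguments are the same in substance.
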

\begin{proof}
In the polynomial $k_{\ol\nu}(x_C)$, we substitute each variable $x_C$ by a
new variable $t_{B(C)}$, where $B(C)$ is the principal border strip of $C$.
Then we obtain a new polynomial $\wt k_{\ol\nu}(t_B)$ with rational coefficients
in (possibly fewer) variables $t_B$, where $B$ runs over the set of
border strip classes $B$ of size $1 \le |B| \le d$.
The claim follows from Lemma~\ref{lema:todo-pol},
Remark~\ref{obse:tiras-borde-liri} and Proposition~\ref{prop:robtau}.
\end{proof}

The following result appears in~\cite[p.~23]{vsqkron}.
It was rediscovered in~\cite{pp2}.
Corollary 2.1 in~\cite{mani} is a particular case of the next lemma and
can be derived from it.

\begin{lema} \label{lema:kron-dos-num-parti}
Let $n$, $d$ be such that $n\ge 2d$.
If $\la = (a^b)$ is a partition of $n$, then
\begin{equation*}
\ccuad {(n-d, d)} = \#\{ \alpha \vdash d \mid\alpha \subseteq \la\}
- \#\{ \beta \vdash d-1 \mid \beta \subseteq \la \}.
\end{equation*}
\end{lema}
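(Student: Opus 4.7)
\emph{The plan} is to combine the Jacobi-Trudi expansion of $\cara{(n-d,d)}$ with the rotational symmetry of rectangular skew shapes. The $2\times 2$ Jacobi-Trudi determinant and its stabilized form (Corollary~\ref{coro:cara-estab}) give, for $n \ge 2d$,
$$\cara{(n-d,d)} = \permu{(n-d,d)} - \permu{(n-d+1,d-1)}.$$
Taking the inner product with $\cara\la \otimes \cara\la$ and invoking Lemma~\ref{lema:lr} yields
$$\ccuad{(n-d,d)} = \multiliric{(n-d,d)} - \multiliric{(n-d+1,d-1)},$$
and Lemma~\ref{lema:itera} rewrites the first term as $\sum_{\alpha \vdash n-d,\,\alpha\subseteq\la}{\sf lr}(\la/\alpha,\la/\alpha;(d))$ and the second analogously as $\sum_{\beta \vdash n-d+1,\,\beta\subseteq\la}{\sf lr}(\la/\beta,\la/\beta;(d-1))$.

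\emph{The key step} uses that $\la = (a^b)$ is a rectangle. For any $\alpha \subseteq \la$, the $180^\circ$ rotation of $\la/\alpha$ inside the bounding box $\la$ is the straight partition shape $\alpha^c := (a - \alpha_b, \dots, a - \alpha_1)$; since the skew character is invariant under $180^\circ$ rotation, $\cara{\la/\alpha} = \cara{\alpha^c}$. Because $\permu{(d)} = 1_{\sime d}$ is trivial and $\cara{\alpha^c}$ is irreducible, Lemma~\ref{lema:lr} then gives
$${\sf lr}(\la/\alpha,\la/\alpha;(d)) = \langle \cara{\la/\alpha} \otimes \cara{\la/\alpha}, \permu{(d)}\rangle = \langle \cara{\alpha^c},\cara{\alpha^c}\rangle = 1$$
whenever $|\la/\alpha| = d$, and the same holds with $d-1$ in place of $d$. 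Consequently each sum collapses to a cardinality:
$$\multiliric{(n-d,d)} = \#\{\alpha \vdash n-d \mid \alpha \subseteq \la\}, \qquad \multiliric{(n-d+1,d-1)} = \#\{\beta \vdash n-d+1 \mid \beta \subseteq \la\}.$$

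\emph{To finish}, the rectangular complement $\gamma \mapsto \gamma^c$ is an involution on sub-partitions of $\la$ exchanging sizes $k$ and $n - k$, so the two counts equal $\#\{\gamma \vdash d \mid \gamma \subseteq \la\}$ and $\#\{\gamma \vdash d-1 \mid \gamma \subseteq \la\}$ respectively, and the lemma follows by subtraction. \emph{The only non-trivial input} is the rotation identity $\cara{\la/\alpha} = \cara{\alpha^c}$ for rectangular $\la$; this is a classical consequence of the skew Littlewood-Richardson rule, which for rectangular $\la$ forces $c^{\la}_{\alpha,\mu} = \delta_{\mu,\alpha^c}$, or equivalently of the direct SSYT bijection obtained by composing $180^\circ$ rotation with reversal of entries.
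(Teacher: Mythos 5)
Your proposal is correct and follows essentially the same route as the paper: the paper invokes Theorem~\ref{teor:kron-combin} (which for $\ol\nu=(d)$ unwinds to exactly your $2\times 2$ Jacobi--Trudi expansion $\cara{(n-d,d)}=\permu{(n-d,d)}-\permu{(n-d+1,d-1)}$ combined with Lemma~\ref{lema:lr} and Lemma~\ref{lema:itera}), and then uses the same key fact that for rectangular $\la$ the $180^\circ$ rotation of $\la/\alpha$ is a straight shape, so each skew character is irreducible and each term ${\sf lr}(\la/\alpha,\la/\alpha;(k))$ equals $1$. The only cosmetic difference is that the paper indexes the surviving terms by the complement partitions of size $d$ and $d-1$ directly, while you count sub-partitions of size $n-d$ and $n-d+1$ and pass through the complementation bijection at the end.
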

\begin{proof}
For each partition $\alpha = \vector \alpha t$, denote
$\rotap \alpha = (\alpha_1^t)/(\alpha_1 - \alpha_t, \dots, \alpha_1 - \alpha_2)$.
Then $\rotap \alpha$ is a skew diagram that is obtained by rotating 180 degrees the diagram
of $\alpha$.
By Theorem~\ref{teor:kron-combin}
\begin{equation*}
{\sf g}(\la,\la, (n-d, d)) =
\sum_{D,\ |D| = d} {\sf lr}(D,D; (d))\, \diagra D -
\sum_{E,\ |E| = d-1} {\sf lr}(E,E; (d-1))\, \diagra E.
\end{equation*}
Since $\la$ is a rectangle, $\diagra D \neq 0$ if and only if there is a partition
$\alpha \subseteq \la$ such that $D = \nume{\rotap \alpha}$.
If any of these two conditions hold, we have $\diagra D = 1$.
By the Littlewood-Richardson rule $\cara\alpha = \cara{\rotap \alpha}$.
Therefore, if $D$ is any diagram of size $k$ with $\diagra D \neq 0$, we have
\begin{equation*}
{\sf lr}(D,D; (k))\,  \diagra D = \langle \cara\alpha \otimes \cara\alpha , \permu{(k)} \rangle = 1.
\end{equation*}
From this the proposition follows.
\end{proof}

In some cases the coefficient of $\diagra D$ in the expansion of
${\sf g}(\la,\la, (n-d,\ol\nu))$ in Theorem~\ref{teor:kron-combin}
can be computed by a simpler formula.

\begin{prop} \label{prop:coefi-rld}
Let $\ol\nu = (\nu_2, \dots, \nu_r)$ be a partition of $d$ and
$n\ge d + \nu_2$.
Then, for any partition $\la$ of $n$ and any diagram class $D$ of size $d$,
the coefficient of $\diagra D$ in equation~\eqref{ecua:kron-merabuena} is
\begin{equation*}
\sum_{\alpha, \beta \vdash d} c^D_\alpha c^D_\beta {\sf g}(\alpha, \beta, \ol\nu).
\end{equation*}
\end{prop}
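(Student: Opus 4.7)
The plan is to identify which border strip tableaux $T \in \tirabo{\wt\nu}$ contribute to the coefficient of $\diagra D$ when $|D| = d$, then reorganize the resulting sum using the Jacobi--Trudi identity (equation~\eqref{ec:jtdos}) applied to $\ol\nu$.

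First I would determine which $T \in \tirabo{\wt\nu}$ contribute. Since $|\wt\nu| = d + \nu_2$ and $e(T) = |\wt\nu| - t_1 = d + \nu_2 - t_1$, the condition $e(T) = d$ in equation~\eqref{ecua:kron-merabuena} forces $t_1 = \nu_2$. But the first row of $\wt\nu$ has length $\nu_2$, so the special border strip containing $(1,\nu_2)$ must coincide with that entire first row. Removing this row (which is a horizontal strip, hence contributes $(-1)^0 = 1$ to $\signo T$) gives a bijection
\[
\{\, T \in \tirabo{\wt\nu} \mid e(T) = d \,\} \longleftrightarrow \tirabo{\ol\nu},
\qquad T \asocia T',
\]
preserving sign ($\signo T = \signo{T'}$) and satisfying $\ol\tau(T) = \conte{T'}$. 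Thus the coefficient of $\diagra D$ for $|D| = d$ in equation~\eqref{ecua:kron-merabuena} equals
\[
\sum_{T' \in \tirabo{\ol\nu}} \signo{T'}\, {\sf lr}(D, D; \conte{T'}).
\]

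Next I would expand ${\sf lr}(D,D;\pi)$ in terms of irreducible characters. By Lemma~\ref{lema:lr}, ${\sf lr}(D,D;\pi) = \langle \cara D \otimes \cara D, \permu\pi \rangle$, and by Lemma~\ref{lema:skew-standar}(2) applied to any representative $\la/\alpha$ of $D$, one has $\cara D = \sum_{\alpha \vdash d} c^D_\alpha \cara\alpha$. Substituting yields
\[
{\sf lr}(D, D; \pi) = \sum_{\alpha, \beta \vdash d} c^D_\alpha c^D_\beta \, \langle \cara\alpha \otimes \cara\beta, \permu\pi \rangle.
\]

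Finally I would interchange the order of summation and use the Jacobi--Trudi expansion~\eqref{ec:jtdos}, which reads $\cara{\ol\nu} = \sum_{T' \in \tirabo{\ol\nu}} \signo{T'}\, \permu{\conte{T'}}$, to collapse the sum over $T'$:
\[
\sum_{T' \in \tirabo{\ol\nu}} \signo{T'}\, {\sf lr}(D, D; \conte{T'})
= \sum_{\alpha, \beta \vdash d} c^D_\alpha c^D_\beta \, \langle \cara\alpha \otimes \cara\beta, \cara{\ol\nu} \rangle
= \sum_{\alpha, \beta \vdash d} c^D_\alpha c^D_\beta \, {\sf g}(\alpha, \beta, \ol\nu),
\]
which is the claimed formula. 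The main conceptual step is the first one: recognizing that the size condition $|D| = d$ singles out exactly the tableaux in $\tirabo{\wt\nu}$ whose first border strip absorbs the entire top row, so that the contribution reduces cleanly to a sum over $\tirabo{\ol\nu}$; the remaining steps are a direct application of Frobenius reciprocity and Jacobi--Trudi.
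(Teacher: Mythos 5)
Your proof is correct and follows essentially the same route as the paper's: both reduce the sum over $T\in\tirabo{\wt\nu}$ with $e(T)=d$ to a sum over $\tirabo{\ol\nu}$ by deleting the first row (which the size constraint forces to be a single special border strip), and then identify the resulting alternating sum with $\langle \cara D\otimes\cara D,\cara{\ol\nu}\rangle$ before expanding $\cara D$ into irreducibles via Lemma~\ref{lema:skew-standar}. The only cosmetic difference is that you expand $\cara D$ before collapsing the sum with equation~\eqref{ec:jtdos}, whereas the paper first passes through the inverse Kostka numbers of Theorem~\ref{teor:er} and equation~\eqref{ec:jt}.
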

\begin{proof}
By Theorem~\ref{teor:kron-combin} the coefficient of $\diagra D$ in
equation~\eqref{ecua:kron-merabuena} is
\begin{equation*}
\sum_{ \substack{T\in \tirabo {\wt\nu} \\ e(T) = d}} \signo T\, {\sf lr}(D,D; \ol\tau(T)).
\end{equation*}
But for each $T\in \tirabo {\wt\nu}$ with $e(T)=d$ the special border strip that contains
the square $(1,\nu_2)$ must be contained in the first row of $\wt\nu$.
Let $\ol T$ be the tableau obtained from $T$ by deleting the first row.
Then, $\ol T \in \tirabo {\ol\nu}$, $\signo{\ol T} = \signo T$ and
$\gamma(\ol T) = \ol\tau(T)$.
Therefore, the coefficient of $\diagra D$ in equation~\eqref{ecua:kron-merabuena} is,
by Theorem~\ref{teor:er},
\begin{equation} \label{ecua:aux}
\sum_{R \in \tirabo{\ol\nu}} \signo R\, {\sf lr}(D,D; \gamma(R))
= \sum_{\mu \vdash d} \kos\mu{\ol\nu}^{(-1)}\, {\sf lr}(D,D; \mu).
\end{equation}
And, because of Lemma~\ref{lema:lr} and equation~\eqref{ec:jt},
the term on the right of equation~\eqref{ecua:aux} is
$\left< \cara D \otimes \cara D , \cara{\ol\nu} \right>$.
The claim follows after expressing each $\cara D$ as a linear combination
of irreducible characters and applying Lemma~\ref{lema:skew-standar}.2.
\end{proof}

\begin{coro} \label{coro:coefi-rld}
Let $\ol\nu = (\nu_2, \dots, \nu_r)$ be a partition of $d$ and
$n\ge d + \nu_2$.
Then, for any partitions $\la$ of $n$ and $\mu$ of $d$,
the coefficient of $\diagra {\clase \mu}$ in equation~\eqref{ecua:kron-merabuena} is
${\sf g}(\mu, \mu, \ol\nu)$.
\end{coro}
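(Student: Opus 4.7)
The plan is to deduce this as an immediate specialization of Proposition~\ref{prop:coefi-rld}. By that proposition, the coefficient of $\diagra{D}$ in~\eqref{ecua:kron-merabuena} is
\[
\sum_{\alpha,\beta \vdash d} c^D_\alpha\, c^D_\beta\, {\sf g}(\alpha,\beta,\ol\nu),
\]
so it suffices to evaluate the LR coefficients $c^D_\alpha$ when $D = \clase \mu$ is the diagram class of a straight (non-skew) partition shape $\mu$.

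The key observation is that a representative of $\clase \mu$ is the skew diagram $\mu/\vacio$, and using the convention from Lemma~\ref{lema:skew-standar}(2), we have $c^{D}_\alpha = c^{\mu/\vacio}_\alpha = c^{\mu}_{\vacio\,\alpha}$. By the Littlewood-Richardson rule (or directly from $s_\mu = s_\vacio\, s_\mu = \sum_\alpha c^\mu_{\vacio\,\alpha}\, s_\alpha$), this coefficient equals the Kronecker delta $\delta_{\mu,\alpha}$. By Corollary~\ref{coro:indep}(2), this value is independent of the choice of representative, so it is well-defined for the class.

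Substituting $c^D_\alpha = \delta_{\mu,\alpha}$ and $c^D_\beta = \delta_{\mu,\beta}$ into the double sum collapses it to a single term, leaving ${\sf g}(\mu,\mu,\ol\nu)$, which is the desired conclusion. There is no real obstacle here; the corollary is simply the remark that, when one takes $D$ to be a straight shape class, the LR-weighted sum over $(\alpha,\beta)$ in Proposition~\ref{prop:coefi-rld} picks out precisely the diagonal Kronecker coefficient ${\sf g}(\mu,\mu,\ol\nu)$.
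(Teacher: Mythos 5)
Your proof is correct and follows essentially the same route as the paper: both specialize Proposition~\ref{prop:coefi-rld} to $D=\clase\mu$ and use the fact that $c^{\clase\mu}_\alpha=\delta_{\mu,\alpha}$ to collapse the double sum to the single term ${\sf g}(\mu,\mu,\ol\nu)$. The extra justification you give via $s_\mu = s_\vacio\, s_\mu$ is a harmless elaboration of the same observation.
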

\begin{proof}
If $D = \clase\mu$ and $\alpha \vdash d$,
the Littlewood-Richardson coefficient $c^D_\alpha$ is different from $0$
if and only if $\alpha = \mu$.
Since $c^D_\mu = 1$, the claim follows.
\end{proof}

Equation~\eqref{ecua:kron-merabuena} takes a simpler form in the case $\ol\nu = (1^d)$.

\begin{prop} \label{prop:kron-escuadra}
Let $n$, $d\in \natural$ be such that $n >d$.
Then for any partition $\la$ of $n$
\begin{equation} \label{ecua:kron-escuadra}
{\sf g}(\la, \la, (n-d, 1^d))=
\sum_{k=0}^d (-1)^{d-k} \sum_{D \in \eseefe Dk}
\left[ \sum_{\alpha \vdash k} c^D_\alpha c^D_{\alpha^\prime} \right] \diagra D.
\end{equation}
\end{prop}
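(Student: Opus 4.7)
The plan is to specialize Theorem~\ref{teor:kron-combin} to $\ol\nu = (1^d)$, for which $\nu_2 = 1$ and $\wt\nu = (1^{d+1})$. The set $\tirabo{(1^{d+1})}$ admits a transparent description: a special border strip tableau of a single column is just a partition of that column into consecutive vertical strips read from top to bottom, so it is in bijection with compositions $(c,\pi_2,\dots,\pi_l)$ of $d+1$ with $c\ge 1$, where $c$ is the size of the strip containing $(1,\nu_2)=(1,1)$. The tableaux with $e(T)=k$ are exactly those with $c=d+1-k$, the remaining strip sizes forming an arbitrary composition $\pi = \ol\tau(T)$ of $k$. Counting rows gives $\signo T = (-1)^{(d+1)-l} = (-1)^{d-\longi\pi}$.

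Plugging this into Theorem~\ref{teor:kron-combin} and factoring out $\diagra D$, the claim reduces to the identity
\[
\sum_{\pi\vDash k}(-1)^{d-\longi\pi}\,{\sf lr}(D,D;\pi) \;=\; (-1)^{d-k}\sum_{\alpha\vdash k} c^D_\alpha\, c^D_{\alpha^\prime}
\]
for each $D\in\eseefe Dk$. By Lemma~\ref{lema:lr}, the left-hand side equals $\bigl\langle \cara D\otimes\cara D,\;\sum_{\pi\vDash k}(-1)^{d-\longi\pi}\permu\pi\bigr\rangle$. The inner sum is a character-theoretic sum I would evaluate by applying equation~\eqref{ec:jtdos} to $\nu=(1^k)$: since the elements of $\tirabo{(1^k)}$ are also in bijection with compositions of $k$ via the top-to-bottom column decomposition, with sign $(-1)^{k-\longi\pi}$, one obtains $\cara{(1^k)}=\sum_{\pi\vDash k}(-1)^{k-\longi\pi}\permu\pi$, whence $\sum_{\pi\vDash k}(-1)^{d-\longi\pi}\permu\pi=(-1)^{d-k}\cara{(1^k)}$.

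To finish, I would expand $\cara D=\sum_{\alpha\vdash k}c^D_\alpha\cara\alpha$ by Lemma~\ref{lema:skew-standar}(2) and use the classical tensoring-by-sign identity $\cara\beta\otimes\cara{(1^k)}=\cara{\beta^\prime}$, which gives $\langle\cara\alpha\otimes\cara\beta,\cara{(1^k)}\rangle=\delta_{\alpha,\beta^\prime}$. Combining these three ingredients yields the desired formula~\eqref{ecua:kron-escuadra}. The only delicate point is bookkeeping of signs (the shift between $\longi\pi$, $d$, $k$, and the heights of the strips); there is no genuine obstacle, since everything follows from Theorem~\ref{teor:kron-combin} plus a single concrete evaluation of a Kronecker product against the sign character.
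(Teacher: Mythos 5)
Your proposal is correct and follows essentially the same route as the paper: the paper also specializes Theorem~\ref{teor:kron-combin}, matches the tableaux $T\in\tirabo{(1^{d+1})}$ with $e(T)=k$ to $\tirabo{(1^k)}$ (by deleting the uppermost strip, which is your composition bookkeeping in disguise, with the same sign shift $(-1)^{d-k}$), identifies the resulting sum with $\langle\cara D\otimes\cara D,\cara{(1^k)}\rangle$ via Lemma~\ref{lema:lr} and equation~\eqref{ec:jtdos}, and finishes with Lemma~\ref{lema:skew-standar}(2) and tensoring by the sign character. The only cosmetic slip is calling $\ol\tau(T)$ an arbitrary composition when Definition~\ref{defi:nuevo-cont} sorts it, but this is harmless since ${\sf lr}(D,D;\cdot)$ is invariant under reordering.
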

\begin{proof}
Let $k\in \nume d$ and $D$ be a diagram class of size $k$.
Let $T\in \tirabo{(1^{d+1})}$ with  $e(T)= k$.
If we remove the uppermost special border strip of $T$, we get a tableau
$R\in \tirabo{(1^k)}$ with $\signo T = (-1)^{d-k} \signo R$.
Thus, we have
\begin{equation} \label{ecua:aux2}
\sum_{ \substack{T\in \tirabo {\wt\nu} \\ e(T) = |D|}} \signo T\, {\sf lr}(D,D; \ol\tau(T)) =
(-1)^{d-k} \sum_{R \in \tirabo{(1^k)}} \signo R \, {\sf lr}(D,D; \gamma(R)).
\end{equation}
By Lemma~\ref{lema:lr} and equation~\eqref{ec:jtdos} we have that the sum
on the right of equation~\eqref{ecua:aux2} is
$ \left< \cara D \otimes \cara D, \cara {(1^k)} \right>$.
By Lemma~\ref{lema:skew-standar}.2 this number is equal to
$\sum_{\alpha \vdash k} c^D_\alpha c^D_{\alpha^\prime}$.
Note that there is a tableau $T\in \tirabo{(1^{d+1})}$ with $e(T) = 0$ and $\signo T = (-1)^d$.
Then, by grouping together all diagram classes of the same size in
equation~\eqref{ecua:kron-merabuena}, we prove the proposition.
\end{proof}

We recover with our techniques the following result which appears in~\cite[\S~6]{pp1}.
Pak and Panova used Lemmas~\ref{lema:kron-dos-num-parti} and~\ref{lema:escuadra-parti}
to prove some results on unimodality.
Corollary~2.2 in~\cite{mani} follows from the next lemma and the well-known
bijection between the set of self conjugate partitions of $k$ and the set
of partitions of $k$ with distinct odd parts.

\begin{lema} \label{lema:escuadra-parti}
Let $d$, $n\in \natural$ be such that $n>d$.
If $\la = (a^b)$ is a rectangle partition of $n$, then
\begin{equation*}
\ccuad {(n-d,1^d)} =
\sum_{k=0}^d (-1)^{d-k} \#\{ \alpha \vdash k \mid \alpha \subseteq \la \text{ and } \alpha = \alpha^\prime \}.
\end{equation*}
\end{lema}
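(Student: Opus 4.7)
The plan is to combine Proposition~\ref{prop:kron-escuadra} with the rectangle-specific argument already employed in the proof of Lemma~\ref{lema:kron-dos-num-parti}. By equation~\eqref{ecua:kron-escuadra},
\[
\ccuad{(n-d,1^d)} = \sum_{k=0}^d (-1)^{d-k} \sum_{D \in \eseefe Dk} \biggl[\sum_{\alpha \vdash k} c^D_\alpha c^D_{\alpha^\prime}\biggr] \diagra D,
\]
so the whole task is to simplify the bracketed sum under the assumption that $\la = (a^b)$ is a rectangle.

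First, I would recall the rectangle observation from the proof of Lemma~\ref{lema:kron-dos-num-parti}: when $\la$ is a rectangle, every $\la$-removable skew diagram is (isomorphic to) $\rotap\alpha$ for a unique partition $\alpha\subseteq\la$ of the same size, and conversely each such $\alpha$ yields exactly one $\la$-removable diagram in the class $\clase{\rotap\alpha}$. Consequently, for every diagram class $D$ of size $k$ one has $\diagra D \in \{0,1\}$, with $\diagra D = 1$ precisely when $D = \clase{\rotap\alpha}$ for some (unique) $\alpha\vdash k$ contained in $\la$.

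Next I would evaluate the coefficients $c^D_\beta$ for such a class $D = \clase{\rotap\alpha}$. By the Littlewood--Richardson rule $\cara{\rotap\alpha} = \cara\alpha$, so using Lemma~\ref{lema:skew-standar}.2 the irreducible expansion of $\cara D$ is simply $\cara\alpha$, whence $c^D_\beta = \delta_{\alpha,\beta}$ for every $\beta \vdash k$. Therefore
\[
\sum_{\beta \vdash k} c^D_\beta\, c^D_{\beta^\prime} = c^D_\alpha\, c^D_{\alpha^\prime} = \delta_{\alpha,\alpha^\prime},
\]
which is $1$ if $\alpha$ is self-conjugate and $0$ otherwise.

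Finally, summing over classes with $\diagra D \neq 0$, the inner sum collapses to
\[
\sum_{D \in \eseefe Dk} \biggl[\sum_{\beta \vdash k} c^D_\beta c^D_{\beta^\prime}\biggr]\diagra D = \#\{ \alpha \vdash k \mid \alpha \subseteq \la \text{ and } \alpha = \alpha^\prime\},
\]
and plugging this back into equation~\eqref{ecua:kron-escuadra} gives the claimed formula. There is no real obstacle: the whole argument is a short specialization of Proposition~\ref{prop:kron-escuadra} to the rectangular case, where the two inputs $\diagra D$ and $c^D_\beta c^D_{\beta^\prime}$ both become extremely rigid; the only point that deserves care is the bookkeeping of the bijection $\alpha \leftrightarrow \clase{\rotap\alpha}$, which is already handled in Lemma~\ref{lema:kron-dos-num-parti}.
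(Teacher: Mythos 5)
Your proposal is correct and follows essentially the same route as the paper: both specialize Proposition~\ref{prop:kron-escuadra} to the rectangle case via the correspondence $\alpha \leftrightarrow \clase{\rotap\alpha}$ established in the proof of Lemma~\ref{lema:kron-dos-num-parti}, and both reduce the bracketed sum to $\delta_{\alpha,\alpha^\prime}$. The only cosmetic difference is that you obtain $\sum_{\beta} c^D_\beta c^D_{\beta^\prime} = \delta_{\alpha,\alpha^\prime}$ directly from the irreducibility of $\cara D = \cara\alpha$ (so that $c^D_\beta = \delta_{\alpha,\beta}$), whereas the paper computes the same quantity as the inner product $\langle \cara\alpha \otimes \cara\alpha, \cara{(1^k)}\rangle = \langle \cara\alpha, \cara{\alpha^\prime}\rangle$.
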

\begin{proof}
Let $D\in \eseefe Dk$ be such that $\diagra D \neq 0$.
Then there is a partition $\beta\subseteq \la$ such that $D = \nume {\rotap \beta}$
and $\diagra D = 1$ (see the proof of Lemma~\ref{lema:kron-dos-num-parti}).
Recall also that $\cara D = \cara{\rotap \beta} = \cara\beta$.
Therefore
\begin{equation*}
\sum_{\alpha \vdash k} c^D_\alpha c^D_{\alpha^\prime} = \left< \cara\beta\otimes\cara\beta, \cara{(1^n)} \right>
=\left< \cara\beta, \cara{\beta^\prime} \right>,
\end{equation*}
which is, by the orthogonality relations, the Kronecker delta $\delta_{\beta, \beta^\prime}$.
Then the summand for $D$ in equation~\eqref{ecua:kron-escuadra} is non-zero if and only if
$\beta\subseteq \la$ and $\beta = \beta^\prime$.
If any of these conditions hold we have
$\left[ \sum_{\alpha\vdash k} c^D_\alpha c^D_{\alpha^\prime} \right] \diagra D = 1$.
From this the lemma follows.
\end{proof}

\begin{lema} \label{lema:coefi-rldeltak}
Let $\ol\nu = (\nu_2, \dots, \nu_r)$ be a partition of $d$ and
$n\ge d + \nu_2$.
Then, for any partition $\la$ of $n$ and any $k \in \nume d$,
the coefficient of $\diagra {\Delta_k}$ in equation~\eqref{ecua:kron-merabuena} is
\begin{equation*}
\Biggl[\sum_{ T\in \tirabo {\wt\nu},\  e(T) = k}
\signo T\, \binom{k}{\ol\tau(T)} \Biggr] k!
\end{equation*}
\end{lema}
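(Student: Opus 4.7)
The plan is to read off the coefficient of $\diagra{\Delta_k}$ directly from Theorem~\ref{teor:kron-combin} and then evaluate the resulting ${\sf lr}$-numbers using Lemma~\ref{lema:lr-delta}. Since $\Delta_k$ is the diagram class of $k$ disjoint squares it has size $|\Delta_k|=k$, so in equation~\eqref{ecua:kron-merabuena} the class $\Delta_k$ can only appear in the inner sum indexed by $D\in\eseefe Dk$. Extracting from that inner sum the single term $D=\Delta_k$ yields
\begin{equation*}
\sum_{\substack{T\in\tirabo{\wt\nu}\\ e(T)=k}} \signo T\, {\sf lr}(\Delta_k,\Delta_k;\ol\tau(T))
\end{equation*}
as the coefficient of $\diagra{\Delta_k}$.

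Now for every such $T$, the vector $\ol\tau(T)$ is a composition of $e(T)=k$ by Definition~\ref{defi:nuevo-cont}, so Lemma~\ref{lema:lr-delta} applies with $d=k$ and $\pi=\ol\tau(T)$, giving ${\sf lr}(\Delta_k,\Delta_k;\ol\tau(T))=\binom{k}{\ol\tau(T)}\,k!$. Substituting this into the display above and pulling the constant factor $k!$ outside the sum yields the claimed formula. There is no real obstacle: the lemma is essentially a direct specialization of Theorem~\ref{teor:kron-combin} to $D=\Delta_k$ combined with the closed-form evaluation from Lemma~\ref{lema:lr-delta}.
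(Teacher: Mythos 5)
Your proof is correct and is essentially identical to the paper's: the paper likewise reads off the coefficient of $\diagra{\Delta_k}$ from Theorem~\ref{teor:kron-combin} as $\sum_{T\in \tirabo{\wt\nu},\, e(T)=k} \signo T\, {\sf lr}(\Delta_k,\Delta_k;\ol\tau(T))$ and then invokes Lemma~\ref{lema:lr-delta}. Your write-up simply makes explicit the two details the paper leaves implicit, namely that $|\Delta_k|=k$ confines the term to the inner sum over $\eseefe Dk$ and that $\ol\tau(T)$ is a composition of $k$ so the lemma applies.
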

\begin{proof}
By Theorem~\ref{teor:kron-combin}, the coefficient of $\diagra {\Delta_k}$ in
equation~\eqref{ecua:kron-merabuena} is
\begin{equation*}
\sum_{ \substack{T\in \tirabo {\wt\nu} \\ e(T) = k}}
\signo T\, {\sf lr}(\Delta_k, \Delta_k; \ol\tau(T)).
\end{equation*}
The claim follows from Lemma~\ref{lema:lr-delta}.
\end{proof}

Some particular instances of the previous lemma are given in the following:

\begin{prop} \label{prop:coefi-rldeltakdos}
Let $\ol\nu = (\nu_2, \dots, \nu_r)$ be a partition of $d$ and
$n\ge d + \nu_2$.
Then, for any partition $\la$ of $n$ and any $k \in \nume d$,
the coefficient of $\diagra {\Delta_k}$ in equation~\eqref{ecua:kron-merabuena} is:

(1) $f^{\ol\nu} d!$, if $k=d$ and $\ol\nu$ is any partition.

(2) $(-1)^{d-k} k! \binom{k}{m-1}$, if $k < d$ and $\ol\nu = (m, 1^{d-m})$ with $m \in \nume d$.
\end{prop}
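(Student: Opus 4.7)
Both parts are applications of Lemma~\ref{lema:coefi-rldeltak}, which expresses the coefficient of $\diagra{\Delta_k}$ in equation~\eqref{ecua:kron-merabuena} as $k!\,S_k$, where
\begin{equation*}
S_k \;:=\; \sum_{T \in \tirabo{\wt\nu},\ e(T) = k} \signo T \; \binom{k}{\ol\tau(T)}.
\end{equation*}
Since $|\wt\nu| = d + \nu_2$, the constraint $e(T) = k$ forces the size $t_1$ of the border strip containing the corner $(1,\nu_2)$ to equal $d + \nu_2 - k$. The strategy in both cases is to pin down the very restricted set of shapes such a strip may take and then to factor the rest of the SBST accordingly.

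For part~(1) one has $k=d$, hence $t_1 = \nu_2$. A short case analysis using only the no-$2\times 2$ block condition and the partition constraints on $\mu(j-1),\mu(j)$ shows that the unique size-$\nu_2$ special border strip of $\wt\nu$ containing $(1,\nu_2)$ is the first row of $\wt\nu$; it is therefore the initial strip of the SBST and contributes sign $+1$. The remaining strips build an SBST whose shape (shifted one row up) is exactly $\ol\nu$, giving a sign-preserving bijection between $\{T \in \tirabo{\wt\nu} : e(T)=d\}$ and $\tirabo{\ol\nu}$ that matches $\ol\tau(T)$ with $\gamma(R)$. Combining Theorem~\ref{teor:er} with Lemma~\ref{lema:kostka-multinom} then gives
\begin{equation*}
S_d \;=\; \sum_{R\in\tirabo{\ol\nu}} \signo R \binom{d}{\gamma(R)} \;=\; \sum_{\mu\vdash d} K^{(-1)}_{\mu,\ol\nu}\binom{d}{\mu} \;=\; f^{\ol\nu},
\end{equation*}
so the coefficient equals $d!\,f^{\ol\nu}$.

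For part~(2), with $\ol\nu = (m,1^{d-m})$ and $k < d$, one has $t_1 = d+m-k > m$ and $\wt\nu = (m,m,1^{d-m})$. The plan is to classify the strip $\zeta_j$ containing $(1,m)$: using the rim-hook / no-$2\times 2$ condition together with the partition constraints on $\mu(j-1),\mu(j)$, one shows that $\zeta_j$ must be the ``big L'' strip $\{(1,m)\}\cup\{(2,1),\ldots,(2,m)\}\cup\{(3,1),\ldots,(s+2,1)\}$ with $s = d-k-1$, having sign $(-1)^{s+1}$, and that necessarily $\mu(j-1)=(m-1)$, $\mu(j)=(m,m,1^s)$. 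The surrounding SBST is then forced: before $\zeta_j$ comes the unique strip of size $m-1$ filling $\mu(j-1)$, with sign $+1$ (absent when $m=1$); after $\zeta_j$ comes a ``tail'' building the remaining column-1 cells in rows $s+3,\ldots,d-m+2$, whose orderings are in bijection with compositions $(s_1,\ldots,s_p)\vDash K$ where $K := k-m+1$, each contributing sign $(-1)^{K-p}$.

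Assembling the signs and using $s+K = d-m$, the sum becomes
\begin{equation*}
S_k \;=\; \frac{k!}{(m-1)!}\,(-1)^{d-m+1}\sum_{p\ge 0}(-1)^p [x^K](e^x-1)^p \;=\; \frac{k!}{(m-1)!}\,(-1)^{d-m+1}\,[x^K]\,e^{-x},
\end{equation*}
via the identity $\sum_{p\ge 0}(-1)^p(e^x-1)^p = 1/(1+(e^x-1)) = e^{-x}$. This evaluates to $(-1)^{d-k}\binom{k}{m-1}$, so $k!\,S_k = (-1)^{d-k}k!\binom{k}{m-1}$ as claimed. The case $m=1$ is covered uniformly (the absent $(m-1)$-part simply leaves $\binom{k}{s_1,\ldots,s_p}$ in the multinomial), and when $k<m-1$ no admissible $s$ exists, consistent with $\binom{k}{m-1}=0$. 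The main obstacle is the rigorous classification of the ``big L'' strips; once this is settled, the rest is routine EGF manipulation.
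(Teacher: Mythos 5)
Your proposal is correct and follows essentially the same route as the paper: both parts rest on Lemma~\ref{lema:coefi-rldeltak}, the strip through $(1,\nu_2)$ is pinned down to the first row (for $k=d$) or to the same ``big L'' plus forced $(m-1)$-strip and column tail (for $k<d$), and the resulting sums are evaluated via the inverse-Kostka/multinomial identity. The only cosmetic difference is that you re-derive the composition identity $\sum_{\pi\vDash K}(-1)^{K-\ell(\pi)}\binom{K}{\pi}=1$ by the exponential generating function $\sum_{p\ge 0}(-1)^p(e^x-1)^p=e^{-x}$, whereas the paper obtains it directly from Lemma~\ref{lema:kostka-multinom} applied to $(1^{K})$.
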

\begin{proof}
If $k=d$, we have, by a similar argument as the one used in the proof of
Proposition~\ref{prop:coefi-rld}, that
\begin{equation*}
\sum_{T\in \tirabo {\wt\nu}, \ e(T) = d}
\signo T\, \binom{k}{\ol\tau(T)} =
\sum_{\mu \vdash d} K^{(-1)}_{\mu,\ol\nu} \binom{d}{\mu}.
\end{equation*}
Then, (1) follows from Lemma~\ref{lema:coefi-rldeltak} and
Lemma~\ref{lema:kostka-multinom}.

For (2), let $T\in \tirabo {\wt\nu}$ with $e(T)=k$.
Assume $m>1$ (the case $m=1$ is similar, but simpler).
Since $e(T) <d$, the special border strip $\zeta$ of $T$ that contains the square
$(1,m)$ has length greater than $m$ and thus cannot contain $(1,1)$
(otherwise $\zeta$ would contain $(2,1)$ and there would be no space for another
special border strip to cover the squares $(2,2), \dots, (2,m)$).
We conclude that $\zeta$ must go through $(1,m)$, $(2,m)$ and $(2,1)$.
Then, there must be a special border strip of length $m-1$ contained in the first row of $T$.
The remaining special border strips of $T$ are contained in the in the first column,
and the total sum of its sizes is $k-m+1$.
If $k < m-1$ there is no room for $\zeta$, so there is no
$T\in \tirabo {\wt\nu}$ with $e(T)=k$.
Therefore, by Lemma~\ref{lema:coefi-rldeltak}, the desired coefficient is zero.
So, we assume from now on that $k \ge m-1$.
Note that in this case $\zeta$ is contained in exactly $d-k+1$ rows of $T$.
Therefore
\begin{equation*}
\sum_{T\in \tirabo {\wt\nu}, \ e(T) = k}
\signo T\, \binom{k}{\ol\tau(T)} =
(-1)^{d-k} \sum_{R\in \tirabo {(1^{k-m+1})}} \signo R\, \binom{k}{m-1,\ \gamma(R)}.
\end{equation*}
Since $\binom{k}{m-1,\ \gamma(R)} = \binom{k}{m-1} \binom{k-m+1}{\gamma(R)}$,
and since, by Lemma~\ref{lema:kostka-multinom}, we have the identity
\begin{equation*}
 \sum_{R\in \tirabo {(1^{k-m+1})}} \signo R\, \binom{k-m+1}{\gamma(R)} = 1,
\end{equation*}
we get that
\begin{equation*}
\sum_{T\in \tirabo {\wt\nu}, \ e(T) = k} \signo T\, \binom{k}{\ol\tau(T)} =
(-1)^{d-k} \binom{k}{m- 1}.
\end{equation*}
Then, (2) follows from Lemma~\ref{lema:coefi-rldeltak}.
\end{proof}

Since the polynomials $k_{\ol\nu}(x_C)$
depend neither on $\la$ nor on $\nu_1$, we can obtain quite
general formulas for components of Kronecker squares of small depth.
The following evaluations of the polynomials $k_{\ol\nu}(x_C)$ are obtained from
Lemma~\ref{lema:todo-pol} and the formulas in number~\ref{para:evaliri}.
Some of its coefficients can also be computed directly from the results in this chapter.

\begin{evakron} \label{para:evakron}
{\em
The evaluations $k_{\ol\nu}(\diagra C)$ for all partitions $\ol\nu$ of size at
most 4 are:
\begin{enumerate}

\item
$\ccuad{(n)} = 1$.

\item
$\ccuad{(n-1,1)} = \diagra{\,\tablau\,} -1$.

\item
$\ccuad{(n-2,2)} = \diagra{\,\tabladh\,} +
\diagra{\, {\raisebox{.65ex}{\tabladv}} \,} +
2\dbinom{\diagra{\,\tablau\,}}{2} - \diagra{\,\tablau\,}$.

\item
$\ccuad{(n-2,1^2)} = \left[ \diagra{\,\tablau\,} -1 \right]^2=
2\dbinom{\diagra{\,\tablau\,}}{2} - \diagra{\,\tablau\,}+1$.

\item
$\ccuad{(n-3,3)} = \diagra{\,\tablath\,} +
\diagra{\, {\raisebox{1.30ex}{\tablatv}} \,} +
\diagra{\, {\raisebox{.65ex}{\tablatdu}} \,} +
\diagra{\, {\raisebox{.65ex}{\tablatud}} \,}$\\[.1cm]
\indent
$ \qquad \qquad \quad
+\, \left[ 2\diagra{\,\tablau\,} -3 \right]
\left[ \diagra{\,\tabladh\,} +
\diagra{\, {\raisebox{.65ex}{\tabladv}} \,} \right] +
6 \dbinom{\diagra{\,\tablau\,}}{3}
-2 \dbinom{\diagra{\,\tablau\,}}{2}$.

\item
$\ccuad{(n-3,2,1)} =
\diagra{\, {\raisebox{.65ex}{\tablatdu}} \,} +
\diagra{\, {\raisebox{.65ex}{\tablatud}} \,} +
\, \left[3 \diagra{\,\tablau\,} -4 \right]
\left[ \diagra{\,\tabladh\,} +
\diagra{\, {\raisebox{.65ex}{\tabladv}} \,} \right]
$\\[.1cm]
\indent
$ \qquad \qquad \quad
+\,12 \dbinom{\diagra{\,\tablau\,}}{3}
-4\dbinom{\diagra{\,\tablau\,}}{2} + \diagra{\,\tablau\,}$.

\item
$\ccuad{(n-3,1^3)} =
\diagra{\, {\raisebox{.65ex}{\tablatdu}} \,} +
\diagra{\, {\raisebox{.65ex}{\tablatud}} \,}
+ \left[ \diagra{\,\tablau\,} - 1 \right]
\left[ \diagra{\,\tabladh\,} +
\diagra{\, {\raisebox{.65ex}{\tabladv}} \,} \right]
$\\[.1cm]
\indent
$ \qquad \qquad \quad
+\, 6 \dbinom{\diagra{\,\tablau\,}}{3}
- 2\dbinom{\diagra{\,\tablau\,}}{2} + \diagra{\,\tablau\,} -1$.

\item
$\ccuad{(n-4,4)} =  \phantom{5} $\\
\indent
$ \qquad \qquad \qquad
\diagra{\, \tablacvc \,} + \diagra{\, \tablacvuuuu\,}
+ \diagra{\, \tablacvtu\,}
+ \diagra{\, \tablacvduu\,}
+ \diagra{\, \tablacvdd\,}  $\\[.1cm]
\indent
$ \qquad \qquad \qquad
+\,  \diagra{\, \tablacsut\,}
+ \diagra{\, \tablacsuud\,}
+ 2 \left[ \diagra{\, \tablacsedd\,}
+ \diagra{\, \tablacseudu\,} \right] $
\\[.1cm]
\indent
$ \qquad \qquad \qquad
+  \left[ 2\diagra{\,\tablau\,} -3 \right]
\left[ \diagra{\,\tablath\,} +
\diagra{\, \raisebox{1.3ex}{\tablatv}\,} \right]
$\\[.1cm]
\indent
$ \qquad \qquad \qquad
+  \left[ 3\diagra{\,\tablau\,} - 7  \right]
\diagra{\, \raisebox{.65ex}{\tablatdu}\,}
+  \left[ 3\diagra{\,\tablau\,} - 4  \right]
\diagra{\, \raisebox{.65ex}{\tablatud}\,}
$\\[.1cm]
\indent
$ \qquad \qquad \qquad
+\, 3 \left[ \dbinom{\diagra{\,\tabladh\,}}{2}
+ \dbinom{\diagra{\, \raisebox{.65ex}{\tabladv}\,}}{2} \right]
+ 2 \left[ \diagra{\, \tabladh\,} \,
\diagra{\, \raisebox{.65ex}{\tabladv} \,}
- \diagra{\, \raisebox{.65ex}{\tablatud} \,} \right]
$\\[.1cm]
\indent
$ \qquad \qquad \qquad
+  \left[ 7\dbinom{\diagra{\,\tablau\,}}{2} -9 \diagra{\,\tablau\,}
+ 9 \right]
\left[ \diagra{\,\tabladh\,} +
\diagra{\, {\raisebox{.65ex}{\tabladv}} \,} \right]
$\\[.1cm]
\indent
$ \qquad \qquad \qquad
+\, 24 \dbinom{\diagra{\,\tablau\,}}{4} -6\dbinom{\diagra{\,\tablau\,}}{3}$.

\item
$\ccuad{(n-4,3,1)} =  \phantom{5} $\\
\indent
$ \qquad \qquad \qquad
\diagra{\, \tablacvtu\,}
+ \diagra{\, \tablacvduu\,}
+ \diagra{\, \tablacsut\,}
+ \diagra{\, \tablacsuud\,}
$\\[.1cm]
\indent
$ \qquad \qquad \qquad
+\, 3 \left[ \diagra{\, \tablacsedd\,}
+ \diagra{\, \tablacseudu\,} \right] $
\\[.1cm]
\indent
$ \qquad \qquad \qquad
+  \left[ 3\diagra{\,\tablau\,} - 4 \right]
\left[ \diagra{\,\tablath\,} +
\diagra{\, \raisebox{1.3ex}{\tablatv}\,} \right]
$\\[.1cm]
\indent
$ \qquad \qquad \qquad
+  \left[ 8\diagra{\,\tablau\,} - 18  \right]
\diagra{\, \raisebox{.65ex}{\tablatdu}\,}
+  \left[ 8\diagra{\,\tablau\,} - 10  \right]
\diagra{\, \raisebox{.65ex}{\tablatud}\,}
$\\[.1cm]
\indent
$ \qquad \qquad \qquad
+\, 5 \left[ \dbinom{\diagra{\,\tabladh\,}}{2}
+ \dbinom{\diagra{\, \raisebox{.65ex}{\tabladv}\,}}{2} \right]
+ 4 \left[ \diagra{\, \tabladh\,} \,
\diagra{\, \raisebox{.65ex}{\tabladv} \,}
- \diagra{\, \raisebox{.65ex}{\tablatud} \,} \right]
$\\[.1cm]
\indent
$ \qquad \qquad \qquad
+  \left[ 19\dbinom{\diagra{\,\tablau\,}}{2} -24 \diagra{\,\tablau\,}
+ 25 \right]
\left[ \diagra{\,\tabladh\,} +
\diagra{\, {\raisebox{.65ex}{\tabladv}} \,} \right]
$\\[.1cm]
\indent
$ \qquad \qquad \qquad
+\, 72 \dbinom{\diagra{\,\tablau\,}}{4} -18\dbinom{\diagra{\,\tablau\,}}{3}
+2\dbinom{\diagra{\,\tablau\,}}{2}$.

\item
$\ccuad{(n-4,2,2)} =  \phantom{5} $\\
\indent
$ \qquad \qquad \qquad
\diagra{\, \tablacvtu\,}
+ \diagra{\, \tablacvduu\,}
+ \diagra{\, \tablacvdd\,}  $\\[.1cm]
\indent
$ \qquad \qquad \qquad
+\,  \diagra{\, \tablacsut\,}
+ \diagra{\, \tablacsuud\,}
+ 2 \left[ \diagra{\, \tablacsedd\,}
+ \diagra{\, \tablacseudu\,} \right] $
\\[.1cm]
\indent
$ \qquad \qquad \qquad
+  \left[ \diagra{\,\tablau\,} -1 \right]
\left[ \diagra{\,\tablath\,} +
\diagra{\, \raisebox{1.3ex}{\tablatv}\,} \right]
$\\[.1cm]
\indent
$ \qquad \qquad \qquad
+  \left[ 5\diagra{\,\tablau\,} - 11  \right]
\diagra{\, \raisebox{.65ex}{\tablatdu}\,}
+  \left[ 5\diagra{\,\tablau\,} - 6  \right]
\diagra{\, \raisebox{.65ex}{\tablatud}\,}
$\\[.1cm]
\indent
$ \qquad \qquad \qquad
+\, 4 \left[ \dbinom{\diagra{\,\tabladh\,}}{2}
+ \dbinom{\diagra{\, \raisebox{.65ex}{\tabladv}\,}}{2} \right]
+ 4 \left[ \diagra{\, \tabladh\,} \,
\diagra{\, \raisebox{.65ex}{\tabladv} \,}
- \diagra{\, \raisebox{.65ex}{\tablatud} \,} \right]
$\\[.1cm]
\indent
$ \qquad \qquad \qquad
+  \left[ 12\dbinom{\diagra{\,\tablau\,}}{2} -15 \diagra{\,\tablau\,}
+ 15 \right]
\left[ \diagra{\,\tabladh\,} +
\diagra{\, {\raisebox{.65ex}{\tabladv}} \,} \right]
$\\[.1cm]
\indent
$ \qquad \qquad \qquad
+\, 48 \dbinom{\diagra{\,\tablau\,}}{4} -12\dbinom{\diagra{\,\tablau\,}}{3}
+2 \dbinom{\diagra{\,\tablau\,}}{2}$.

\item
$\ccuad{(n-4,2,1^2)} =  \phantom{5} $\\
\indent
$ \qquad \qquad \qquad
\diagra{\, \tablacvtu\,}
+ \diagra{\, \tablacvduu\,}
$\\[.1cm]
\indent
$ \qquad \qquad \qquad
+\,  \diagra{\, \tablacsut\,}
+ \diagra{\, \tablacsuud\,}
+ 3 \left[ \diagra{\, \tablacsedd\,}
+ \diagra{\, \tablacseudu\,} \right] $
\\[.1cm]
\indent
$ \qquad \qquad \qquad
+  \left[ \diagra{\,\tablau\,} -1 \right]
\left[ \diagra{\,\tablath\,} +
\diagra{\, \raisebox{1.3ex}{\tablatv}\,} \right]
$\\[.1cm]
\indent
$ \qquad \qquad \qquad
+  \left[ 8\diagra{\,\tablau\,} - 18  \right]
\diagra{\, \raisebox{.65ex}{\tablatdu}\,}
+  \left[ 8\diagra{\,\tablau\,} - 10  \right]
\diagra{\, \raisebox{.65ex}{\tablatud}\,}
$\\[.1cm]
\indent
$ \qquad \qquad \qquad
+\, 3 \left[ \dbinom{\diagra{\,\tabladh\,}}{2}
+ \dbinom{\diagra{\, \raisebox{.65ex}{\tabladv}\,}}{2} \right]
+ 4 \left[ \diagra{\, \tabladh\,} \,
\diagra{\, \raisebox{.65ex}{\tabladv} \,}
- \diagra{\, \raisebox{.65ex}{\tablatud} \,} \right]
$\\[.1cm]
\indent
$ \qquad \qquad \qquad
+  \left[ 17\dbinom{\diagra{\,\tablau\,}}{2} -21 \diagra{\,\tablau\,}
+ 22 \right]
\left[ \diagra{\,\tabladh\,} +
\diagra{\, {\raisebox{.65ex}{\tabladv}} \,} \right]
$\\[.1cm]
\indent
$ \qquad \qquad \qquad
+\, 72 \dbinom{\diagra{\,\tablau\,}}{4} -18\dbinom{\diagra{\,\tablau\,}}{3}
+4\dbinom{\diagra{\,\tablau\,}}{2} -\diagra{\,\tablau\,}$.

\indent{\rm $\phantom{\rm 1}$(12)}
$\ccuad{(n-4,1^4)} =  \phantom{5} $\\
\indent
$ \qquad \qquad \qquad
\diagra{\, \tablacvdd\,}
+ \diagra{\, \tablacsedd\,}
+ \diagra{\, \tablacseudu\,} $
\\[.1cm]
\indent
$ \qquad \qquad \qquad
+  \left[ 3\diagra{\,\tablau\,} - 7  \right]
\diagra{\, \raisebox{.65ex}{\tablatdu}\,}
+  \left[ 3\diagra{\,\tablau\,} - 4  \right]
\diagra{\, \raisebox{.65ex}{\tablatud}\,}
$\\[.1cm]
\indent
$ \qquad \qquad \qquad
+ \dbinom{\diagra{\,\tabladh\,}}{2}
+ \dbinom{\diagra{\, \raisebox{.65ex}{\tabladv}\,}}{2}
+ 2 \left[ \diagra{\, \tabladh\,} \,
\diagra{\, \raisebox{.65ex}{\tabladv} \,}
- \diagra{\, \raisebox{.65ex}{\tablatud} \,} \right]
$\\[.1cm]
\indent
$ \qquad \qquad \qquad
+  \left[ 5\dbinom{\diagra{\,\tablau\,}}{2} - 6 \diagra{\,\tablau\,}
+ 6 \right]
\left[ \diagra{\,\tabladh\,} +
\diagra{\, {\raisebox{.65ex}{\tabladv}} \,} \right]
$\\[.1cm]
\indent
$ \qquad \qquad \qquad
+\, 24 \dbinom{\diagra{\,\tablau\,}}{4} -6\dbinom{\diagra{\,\tablau\,}}{3}
+2\dbinom{\diagra{\,\tablau\,}}{2} -\diagra{\,\tablau\,} +1$.

\end{enumerate}
}
\end{evakron}

\begin{obse}
{\em
Formula 2 appears already in~\cite{gara}.
Formula 3 appears in a different form in~\cite{sax}.
See also Lemmas~1-4 in Saxl's paper~\cite{sax}, where there are equivalent expressions
to our formulas~2, 3 and 5 from number~\ref{para:evaliri}.
Also formulas 1-4 here and formulas 1-4 in number~\ref{para:evaliri}
appear in Lemmas~4.1 and~4.2 in~\cite{zis}.
The graphical notation developed here permits an easier description of the formulas
when the depth of $\nu$ grows.
Formulas 6 and 7 appeared for the first time in~\cite{vpp}.
Formulas 8-12 in numbers~\ref{para:evaliri} and~\ref{para:evakron} were
calculated jointly by Avella-Alaminos and Vallejo and appear in~\cite{avala}.
}
\end{obse}

\section{The Saxl conjecture} \label{sec:saxl}

This section contains other main results of the paper.
Let $\rho_k = (k,k-1, \dots, 2,1)$ denote the \emph{staircase} partition
of size $n_k = \binom{k+1}{2}$.
The Saxl conjecture asserts that for all $k\ge 1$ the Kronecker square
$\cara{\rho_k}\otimes \cara{\rho_k}$ contains all irreducible characters
of the symmetric groups $\sime{n_k}$ as components.
See~\cite{ppv} for more information about the conjecture and some results
towards its proof.
Here we apply the results of Section~\ref{sec:evalk} to the study of Saxl's conjecture
and show what we believe to be a surprising result (Theorem~\ref{teor:kronescalerapoli}):
for each partition $\ol\nu$ of $d$ there is a piecewise polynomial function
$\fun {s_{\ol\nu}} {[0,\infty)} \real$ with the property that for all $k$ such that $(n_k-d, \ol\nu)$
is a partition one has ${\sf g}(\rho_k, \rho_k, (n_k-d, \ol\nu)) = s_{\ol\nu}(k)$.
This is the more surprising since the product $\cara{\rho_k} \otimes \cara{\rho_k}$ seems
to be the most difficult product of size $n_k$ to evaluate (see~\cite[p. 93]{lit}).
We apply this result to show (Theorem~\ref{teor:mi-saxl}) that
${\sf g}(\rho_k, \rho_k, (n_k-d, \ol\nu))$ is positive for all but at most $2d$ values of $k$.
We conclude the section with the explicit evaluation of $s_{\ol\nu}$ for all partitions of
size $|\ol\nu| \le 5$, and two conjectures, one of which is equivalent to Saxl's conjecture.

\begin{nota} \label{nota:escaleras}
{\em
For each $k\ge 1$, let $\zeta_k = b(\rho_k)$ denote the principal border strip
of $\rho_k$ (see Paragraph~\ref{para:young}).
Then $|\zeta_k|= 2k-1$.
Denote also $Z_k = \clase{\zeta_k}$.
}
\end{nota}

\begin{ejem}
{\em
For $k=4$ we have
$\rho_4 =
\begin{picture} (40,40) (-5,16)
\drawline (0,0) (10,0)
\drawline (0,10) (20,10)
\drawline (0,20) (30,20)
\drawline (0,30) (40,30)
\drawline (0,40) (40,40)
\drawline (0,0) (0,40)
\drawline (10,0) (10,40)
\drawline (20,10) (20,40)
\drawline (30,20) (30,40)
\drawline (40,30) (40,40)
\end{picture}
$
and
$\zeta_4 =
\begin{picture} (50,40) (-5,16)
\drawline (0,0) (10,0)
\drawline (0,10) (20,10)
\drawline (0,20) (30,20)
\drawline (10,30) (40,30)
\drawline (20,40) (40,40)
\drawline (0,0) (0,20)
\drawline (10,0) (10,30)
\drawline (20,10) (20,40)
\drawline (30,20) (30,40)
\drawline (40,30) (40,40)
\end{picture}
$.
}
\end{ejem}

\bigskip

\begin{defi}
{\em
By a \emph{piecewise polynomial function} (\emph{p.p.f.} for short) we mean a
continuous function $\fun f {[0,\infty)} \real$ with a finite
sequence $0 = x_0 < x_1 < \cdots < x_\ell = \infty$, $\ell\in \natural$, such that
the restriction of $f$ to each interval $[x_{i-1}, x_i]$, $i\in \nume \ell$,
is a polynomial function.
It is easy to see that linear combinations and finite products of p.p.f.'s are p.p.f.
We say that $f$ is a rational p.p.f. if the corresponding
polynomial functions defined on each interval $[x_{i-1}, x_i]$ have
rational coefficients.
}
\end{defi}

\begin{ejem} \label{ejem:ppf}
{\em
Let $a \in \natural$ and $b\in \noneg$.
Define $f_{a,b}(x) = 0$, for all $x \in [0, a + b -1]$, and
\begin{equation*}
f_{a,b}(x) = \frac{(x-b)(x-b-1) \cdots (x-b-a+1)}{a!} = \binom{x-b}{a},
\end{equation*}
for all $x \ge a + b -1$.
Then $f_{a,b}$ is a rational p.p.f.,
it is polynomial of degree $a$ in the interval $[a + b -1, \infty)$,
it is positive in the interval $(a + b -1, \infty)$ and we have $f_{a,b}(n)\in \noneg$,
for all $n\in \noneg$.
We could have defined $f_{a,b}$ to be zero in the interval $[0,b]$ and $\binom{x-b}{a}$
in the interval $[b, \infty)$, but it seemed more convenient that $f_{a,b}$ were zero
on the bigger interval.
}
\end{ejem}

\begin{mapfd} \label{para:poli-snake}
{\em
Let $D$ be a nonempty diagram class and let
$C_1^{\sqcup a_1} \sqcup \cdots \sqcup C_m^{\sqcup a_m}$ be a sorted decomposition
of $D$ (see Paragraph~\ref{para:sorted}).
We define a function $\fun {f_D}{[0, \infty)}\real$ as follows:
if there is some $i\in \nume m$ such that the principal border strip
(see Paragraph~\ref{para:clase-diag}) $B(C_i) \neq Z_n$ for all $n \in \natural$,
we define $f_D$ to be the zero map.
Let $k\in \natural$.
In this case $\diagralibre{\rho_k}{C_i} = 0$.
Therefore $\diagralibre{\rho_k}D = 0$, in other words, $\diagralibre{\rho_k}D = f_D(k)$.

If, for each $l\in \nume m$, there is some $n_l \in \natural$ such that
$B(C_l) = Z_{n_l}$, we let, for each $0 \le i \le m$,
\begin{equation*}
b_i = \sum_{t \in \nume i} (n_t-1)a_t + \sum_{t > i} n_t a_t,
\end{equation*}
and define $f_D = f_{a_1,b_1} \cdots f_{a_m,b_m}$.
Note that $b_0$ is the number $r_D$ of removable squares of $D$
(see Paragraph~\ref{nota:remueve-sesgado}),
$a_i + b_i = b_{i-1}$, for all $i \in \nume m$ and that
$b_0 > b_1 > \cdots > b_m \ge 0$.
In this case $f_D$ is a rational p.p.f.,
it is the zero map in the interval $[0, b_0 -1]$,
it is polynomial of degree $a_1 + \cdots + a_m$ in the interval $[b_0 -1, \infty)$,
and it is positive in the interval $(b_0 -1, \infty)$.
Finally, we define $\fun {f_\vacio} {[0,\infty)} \real$ to be the constant map
equal to 1.
}
\end{mapfd}

\begin{lema} \label{lema:remueve-snakes}
Let $D = C_1^{\sqcup a_1} \sqcup \cdots \sqcup C_m^{\sqcup a_m}$
be a sorted decomposition.
Suppose that for each $i \in \nume m$ there is some $n_i$ such that $B(C_i)= Z_{n_i}$.
Let $k\in \natural$ and denote $a_0 = k - \sum_{i\in \nume m} n_i a_i$.
If $a_0 <0$, then $\diagralibre{\rho_k}{D} = 0$.
If $a_0 \ge 0$, then
\begin{equation*}
\diagralibre{\rho_k}{D} =
\binom{a_0 + a_1 + \cdots + a_m}{a_0, a_1, \dots, a_m}.
\end{equation*}
\end{lema}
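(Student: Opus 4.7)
The plan is to construct an explicit bijection between $\Diagralibre{\rho_k}{D}$ and multiset permutations of an alphabet with $a_0$ ``blank'' symbols and $a_i$ copies of a symbol for each class $C_i$, thereby obtaining the cardinality $\binom{a_0 + a_1 + \cdots + a_m}{a_0, a_1, \dots, a_m}$.

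The first step is to analyze the placements of a single connected class $C$ with $B(C) = Z_n$ inside $\rho_k$. A direct calculation shows that a translate $\zeta_n + (p,q) \subseteq \rho_k$ is the principal border strip of a $\rho_k$-removable diagram if and only if $p + q = k-n$: in any other placement, some row of $\rho_k$ would acquire a gap in its removed cells, so the complement would fail to be a partition. Hence the $\rho_k$-removable translates of $\zeta_n$ are parametrized by $p \in \{0, 1, \dots, k-n\}$, yielding $\diagralibre{\rho_k}{Z_n} = k-n+1$. Combined with Lemma~\ref{lema:redu-tirabo} this gives $\diagralibre{\rho_k}{C} = k-n+1$, and a pigeonhole argument then shows that for each $p$ there is exactly one $\sigma_p \in \Diagralibre{\rho_k}{C}$ whose principal border strip sits at position $(p, k-p-n)$. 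This $\sigma_p$ lies inside $y(\zeta_n) + (p, k-p-n)$ and occupies precisely the rows $p+1, \dots, p+n$ of $\rho_k$, so distinct components of a disjoint union must occupy disjoint row intervals.

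If $a_0 < 0$ then $\sum_i n_i a_i > k$ forces the required total row span to exceed $k$, so no valid $\sigma$ exists and $\diagralibre{\rho_k}{D} = 0$. Otherwise, given $\sigma \in \Diagralibre{\rho_k}{D}$, I would list its connected components in order of top row. Each row $i$ of $\rho_k$ not occupied by any component satisfies $\alpha_i = k-i+1$; there are precisely $a_0$ such ``blank'' rows. Reading the atoms from top to bottom produces a word of length $a_0 + a_1 + \cdots + a_m$ with the correct letter multiplicities. Conversely, any such word specifies the row interval of each atom: a blank occupies a single row, while a $C_i$-atom starting at row $p+1$ determines the unique $\rho_k$-removable diagram $\sigma_p$ of class $C_i$ from the previous step.

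The main point to verify in the reverse direction is that the resulting $\alpha = \rho_k \setminus \sigma$ is a partition. Within the row range of a single component this follows from the row-count inequality $c_{l+1} \ge c_l - 1$ satisfied by any skew diagram, which is just the partition property of the inner shape. At each boundary between consecutive atoms (blank/blank, blank/component, component/blank, component/component) a short case-check shows that the row lengths of $\alpha$ do not increase. This boundary analysis is the main technical obstacle of the argument; once it is carried out, the bijection is established and counting multiset permutations of the described alphabet yields the claimed multinomial coefficient.
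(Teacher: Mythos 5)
Your proof is correct and follows essentially the same route as the paper: both establish a bijection between $\Diagralibre{\rho_k}{D}$ and words with $a_0$ blanks and $a_i$ copies of the letter $i$, read off from top to bottom, and then count multiset permutations. The only cosmetic difference is that you index positions by the rows of $\rho_k$ while the paper indexes by the removable corners along $\zeta_k$ (one per row, so the two encodings coincide), and you spell out the verification that the complement is a partition, which the paper leaves implicit.
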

\begin{proof}
Recall that the sum $\sum_{i\in \nume m} n_i a_i$ is the number of removable squares of $D$.
In case $a_0 < 0$, $D$ does not fit in $\rho_k$.
Therefore $\diagralibre{\rho_k}{D} = 0$.
Let us assume that $a_0\ge 0$, then our hypothesis on the decomposition
of $D$ implies $\diagralibre{\rho_k}{D} > 0$.
For each $\rho_k$-removable diagram $\sigma\in D$, we define a word $w_\sigma$
of length $a_0 + a_1 + \cdots + a_m$ in the alphabet $0$, $1, \dots, m$
such that $i$ appears $a_i$ times, for $0 \le i \le m$.
We do this by looking in which order (say from top to bottom) each
component of $\sigma$ appears in $\zeta_k$.
A $0$ in a $w_\sigma$ stands for an empty removable square in $\zeta_k$, that is,
a square of $\zeta_k$ not occupied by $\sigma$; a positive
$i$ in $w_\sigma$ stands for a component of $\sigma$ that belongs to $C_i$.
This establishes a one-to-one correspondence between ${\sf R}_{\rho_k}(D)$ and
the set of words just described.
Since the multinomial coefficient counts such set of words, the lemma follows.
\end{proof}

\begin{prop} \label{prop:polisnakes}
Let $D = C_1^{\sqcup a_1} \sqcup \cdots \sqcup C_m^{\sqcup a_m}$ be a sorted
decomposition.
Then for all $k \in \natural$ we have
\begin{equation*}
\diagralibre{\rho_k}{D} = f_D(k).
\end{equation*}
\end{prop}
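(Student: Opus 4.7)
The plan is to match $\diagralibre{\rho_k}{D}$ with $f_D(k)$ by splitting into the two cases used to define $f_D$ in Paragraph~\ref{para:poli-snake} and invoking Lemma~\ref{lema:remueve-snakes} in the nontrivial case.

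First suppose some component class $C_i$ satisfies $B(C_i)\neq Z_n$ for every $n$. By Lemma~\ref{lema:redu-tirabo}, $\diagralibre{\rho_k}{C_i}=\diagralibre{\rho_k}{B(C_i)}$, and since every connected border strip removable from $\rho_k$ is a translate of some $Z_n$ with $n\le k$ (the observation already used in Paragraph~\ref{para:poli-snake}), this vanishes. Hence $\rho_k$ admits no removable diagram whose sorted decomposition includes a component isomorphic to $C_i$, so $\diagralibre{\rho_k}{D}=0=f_D(k)$ by the definition of $f_D$ in this case.

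For the main case, assume $B(C_i)=Z_{n_i}$ for each $i\in \nume m$, and put $a_0=k-\sum_i n_i a_i$ as in Lemma~\ref{lema:remueve-snakes}. If $a_0<0$, i.e.\ $k<b_0$, the lemma gives $\diagralibre{\rho_k}{D}=0$; on the other side, $a_1+b_1-1=b_0-1\ge k$, so the factor $f_{a_1,b_1}(k)$ already vanishes and the entire product $f_D(k)$ is zero. If $a_0\ge 0$, the recursion $a_i+b_i=b_{i-1}$, immediate from the definition of $b_i$, implies $k\ge b_0\ge b_{i-1}$ for every $i$, so each $f_{a_i,b_i}(k)$ is given by its polynomial formula $\binom{k-b_i}{a_i}$; telescoping the same recursion yields $k-b_i=a_0+a_1+\cdots+a_i$.

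Writing $s_i=a_0+\cdots+a_i$, the product $f_D(k)$ collapses via the standard identity
\[
\prod_{i=1}^m\binom{s_i}{a_i}=\prod_{i=1}^m\frac{s_i!}{s_{i-1}!\,a_i!}=\binom{s_m}{a_0,a_1,\ldots,a_m},
\]
which is exactly the multinomial coefficient furnished by Lemma~\ref{lema:remueve-snakes}. The only real obstacle is bookkeeping: keeping the indices $a_i$ and $b_i$ aligned with the ordering of the sorted decomposition so that $b_{i-1}=a_i+b_i$ and $b_0=\sum_i n_i a_i$, which makes the thresholds $k<b_0$ (zero regime) and $k\ge b_0$ (polynomial regime) coincide on both sides of the claimed equality.
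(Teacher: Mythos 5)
Your proposal is correct and follows essentially the same route as the paper: both handle the degenerate case via the definition of $f_D$ in Paragraph~\ref{para:poli-snake}, invoke Lemma~\ref{lema:remueve-snakes} for the multinomial count, and identify it with $f_D(k)=\prod_i f_{a_i,b_i}(k)$ through the telescoping identity $k-b_i=a_0+a_1+\cdots+a_i$ and the factorization of the multinomial coefficient into binomials. The only difference is cosmetic: you spell out why $\diagralibre{\rho_k}{C_i}=0$ when $B(C_i)$ is not a staircase border strip, a point the paper delegates to Paragraph~\ref{para:poli-snake}.
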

\begin{proof}
If there is some $i\in \nume m$ such that $B(C_i) \neq Z_n$ for all $n \in \natural$,
the claim follows from Paragraph~\ref{para:poli-snake}.
Suppose then, that for each $i\in \nume m$, there is some $n_i \in \natural$ such that
$B(C_i) = Z_{n_i}$.
Let $b_0, \dots, b_m$ be defined as in Paragraph~\ref{para:poli-snake}.
Then, by Lemma~\ref{lema:remueve-snakes}, we have, for any $k \ge b_0$ that
\begin{equation*}
\diagralibre{\rho_k}{D} =
\binom{k - b_0 + a_1 + \cdots + a_m}{k - b_0, a_1, \dots, a_m}.
\end{equation*}
Since
\begin{equation*}
\binom{k - b_0 + a_1 + \cdots + a_m}{k - b_0, a_1, \dots, a_m} =
\binom{k - b_0 + a_1 + \cdots + a_m}{a_m} \cdots \binom{k - b_0 + a_1}{a_1},
\end{equation*}
and since $k -b_i = k - b_0 + a_1 + \cdots + a_i$, we get $\diagralibre{\rho_k}{D} = f_D(k)$.
Now, if  $k < b_0$, since $b_0$ is the number of removable squares of $D$,
we have $\diagralibre{\rho_k}{D} = 0$, and, by definiton, $f_D(k) = 0$.
The proof is complete.
\end{proof}

\begin{obse}
{\em
Note that $\diagralibre{\rho_k}{Z_m} = k-m+1$, for all $k \ge m-1$ and that
for $k \le m-1$ one has $\diagralibre{\rho_k}{Z_m} = 0$.
That is why we have to consider piecewise polynomial functions.
}
\end{obse}

\begin{nota} \label{nota:cota-infe}
{\em
In the next theorem we need the following notation.
Let $\ol\nu = (\nu_2, \dots, \nu_r)$ be a partition of $d$.
For each $k \in \natural$, let $n_k = |\rho_k| = \binom{k+1}{2}$
and $t(\ol\nu)$ be the smallest integer that is greater or equal to
$\frac{-1 + \sqrt{1 + 8(\nu_2 + d)}}{2}$.
Hence $n_k \ge d + \nu_2$ if and only if  $k \ge t(\ol\nu)$.
In other words, $(n_k - d, \ol\nu )$ is a partition of $n_k$ if and only if
$k \ge t(\ol\nu)$.
Finally, let $c({\ol\nu}) = \max \{ d-1, t(\ol\nu)\}$.
}
\end{nota}

\begin{teor} \label{teor:kronescalerapoli}
Let $\ol\nu$ be a partition of $d$.
Then there is a rational piecewise polynomial function
\begin{equation*}
\fun {s_{\ol\nu}} {[0,\infty)} \real
\end{equation*}
such that for all $k \ge t(\ol\nu) $
\begin{equation*}
{\sf g} (\rho_k, \rho_k, ( n_k - d, \ol\nu ) ) = s_{\ol\nu}(k).
\end{equation*}
Moreover, $s_{\ol\nu}$ is a polynomial function of degree $d$ in the interval
$[ c(\ol\nu), \infty )$ with leading coefficient $f^{\ol\nu}$.
\end{teor}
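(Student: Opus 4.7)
The plan is to reduce the Kronecker coefficient to the combinatorial expansion
furnished by Theorem~\ref{teor:kron-combin} and then plug in the piecewise
polynomial functions produced by Proposition~\ref{prop:polisnakes}.
For every $k\ge t(\ol\nu)$ the hypothesis $n_k\ge d+\nu_2$ holds, so
Theorem~\ref{teor:kron-combin} applied to $\la=\rho_k$ gives
\begin{equation*}
{\sf g}(\rho_k,\rho_k,(n_k-d,\ol\nu))\ =\ \sum_{D,\ |D|\le d}\alpha_D\,\diagralibre{\rho_k}{D},
\end{equation*}
where the coefficients $\alpha_D=\sum_{T\in\tirabo{\wt\nu},\,e(T)=|D|}\signo T\,{\sf lr}(D,D;\ol\tau(T))$ are rational numbers that do not depend on $k$.
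Since Proposition~\ref{prop:polisnakes} expresses each $\diagralibre{\rho_k}{D}$ as a value $f_D(k)$ of a rational p.p.f., I would simply set
\begin{equation*}
s_{\ol\nu}(k)\ :=\ \sum_{D,\ |D|\le d}\alpha_D\,f_D(k),
\end{equation*}
which is a finite $\racional$-linear combination of rational p.p.f.'s and hence itself a rational p.p.f. that agrees with the Kronecker coefficient at every $k\ge t(\ol\nu)$.

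The next step is to pin down the polynomial range.
Paragraph~\ref{para:poli-snake} tells us that $f_D$ is either identically zero or, writing the sorted decomposition of $D$ as $C_1^{\sqcup a_1}\sqcup\cdots\sqcup C_m^{\sqcup a_m}$ with $B(C_i)=Z_{n_i}$, equals the product $f_{a_1,b_1}\cdots f_{a_m,b_m}$, a polynomial on $[b_0(D)-1,\infty)$ with $b_0(D)=\sum_i n_i a_i$.
Because $|C_i|\ge|B(C_i)|=2n_i-1\ge n_i$, one obtains $b_0(D)\le\sum_i|C_i|a_i=|D|\le d$, so every summand $f_D$ is polynomial on $[d-1,\infty)$.
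Intersecting with the existence range $k\ge t(\ol\nu)$ shows that the restriction of $s_{\ol\nu}$ to $[c(\ol\nu),\infty)$ is a polynomial.

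For the degree and the leading coefficient I would isolate which $D$ can contribute to $k^d$.
On its polynomial range $f_D$ has degree $a_1+\cdots+a_m$, the number of connected components of $D$, and that number is bounded by $\sum_i|C_i|a_i=|D|\le d$, with equality forcing each $C_i$ to be the single square $\tablau$ and $|D|=d$, that is, $D=\Delta_d$.
Hence every $D\ne\Delta_d$ with $|D|\le d$ contributes a polynomial of strictly smaller degree to $s_{\ol\nu}$, and only $D=\Delta_d$ contributes to the coefficient of $k^d$.
Proposition~\ref{prop:coefi-rldeltakdos}(1) gives $\alpha_{\Delta_d}=f^{\ol\nu}\,d!$, while the recipe of Paragraph~\ref{para:poli-snake} yields $f_{\Delta_d}(k)=\binom{k}{d}$ on $[d-1,\infty)$; combining these, the leading term of $s_{\ol\nu}$ on $[c(\ol\nu),\infty)$ is $f^{\ol\nu}\,d!\cdot\binom{k}{d}=f^{\ol\nu}k^d+O(k^{d-1})$, proving the claim.
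The one delicate point, and the main obstacle in the argument, is this degree bookkeeping: one must verify the strict inequality $\sum_i a_i<d$ for every nonzero $f_D$ with $D\ne\Delta_d$ and $|D|\le d$, so that the leading contribution from $\Delta_d$ is not cancelled by any other summand.
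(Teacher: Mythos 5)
Your proposal is correct and follows essentially the same route as the paper: define $s_{\ol\nu}$ as the $\racional$-linear combination $\sum_D \alpha_D f_D$ coming from Theorem~\ref{teor:kron-combin} and Proposition~\ref{prop:polisnakes}, then isolate $D=\Delta_d$ as the unique summand of degree $d$ and read off the leading coefficient from Proposition~\ref{prop:coefi-rldeltakdos}(1). The only (immaterial) difference is your degree bookkeeping via $\sum_i a_i\le\sum_i|C_i|a_i=|D|$ where the paper uses the count $b_0=\sum_i n_ia_i$ of removable squares; both force $D=\Delta_d$ in the equality case.
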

\begin{proof}
Define, for each $x\in [0,\infty)$,
\begin{equation} \label{ecua:defi-poli-bueno}
s_{\ol\nu}(x) =
\sum_{  D, \, |D| \le d}\ \sum_{ \substack{T\in \tirabo {\wt\nu} \\ e(T) = |D|}}
\signo T\, {\sf lr}(D,D; \ol\tau(T))\, f_D(x).
\end{equation}
Thus $s_{\ol\nu}$ is a rational p.p.f.
By Proposition~\ref{prop:polisnakes}, we have, for all $k\in \natural$, that
\begin{equation*}
s_{\ol\nu}(k) =
\sum_{D, \, |D| \le d}\ \sum_{ \substack{T\in \tirabo {\wt\nu} \\ e(T) = |D|}}
\signo T\, {\sf lr}(D,D; \ol\tau(T))\, \diagralibre {\rho_k} D,
\end{equation*}
and, by Theorem~\ref{teor:kron-combin}, we have, for all $k \ge t(\ol\nu)$, that
\begin{equation*}
s_{\ol\nu}(k) = {\sf g} (\rho_k, \rho_k, ( n_k - d, \ol\nu ) ).
\end{equation*}
Let $D$ be a nonempty diagram class that corresponds to a summand of $s_{\ol\nu}(x)$ with
$f_D \neq 0$.
Let $D = C_1^{\sqcup a_1} \sqcup \cdots \sqcup C_m^{\sqcup a_m}$ be a sorted decomposition.
Since $f_D \neq 0$, there is some $k\in \natural$ such that $\diagralibre {\rho_k}D >0$.
Then, for each $i\in \nume m$, there is some $n_i \in \natural$
with $B(C_i) = Z_{n_i}$.
Recall that $b_0 = \sum_{i\in \nume m} n_i a_i$ is the number of removable squares of $D$.
Observe that $b_0 \le |D| \le d$ and $b_0 = d$ if and only if $D = \Delta_d$.
Since the degree of $f_D$ in the interval $[b_0 -1, \infty)$ is $a_1 + \cdots + a_m$, the degree
of $f_D$ is at most $d$ and the degree of $f_D$ is $d$ if and only if $D=\Delta_d$.
From the previous discussion we conclude that in the interval $[d-1, \infty)$ the only
summand of degree $d$ in equation~\eqref{ecua:defi-poli-bueno} is $f_{\Delta_d}$.
All other non-zero summands are polynomial of degree smaller that $d$ in the same
interval.
Therefore $s_{\ol\nu}$ is polynomial of degree $d$ in $[ c(\ol\nu), \infty )$.

It remains to show that the coefficient of $x^d$ in $s_{\ol\nu}(x)$
in the interval $[ c(\ol\nu), \infty )$ is $f^{\ol\nu}$.
By Proposition~\ref{prop:coefi-rldeltakdos}.1 the coefficient of $\diagralibre{\rho_k}{\Delta_d}$
in equation~\eqref{ecua:kron-merabuena} is $f^{\ol\nu} d!$
Therefore, the coefficient of $x^d$ in $s_{\ol\nu}(x)$ is the coefficient of $x^d$ in
$f^{\ol\nu} d! f_{\Delta_d}(x)$, which is the coefficient of $x^d$ in
$f^{\ol\nu} d! \binom{x}{d}$.
The proof of the theorem is complete.
\end{proof}

\begin{lema} \label{lema:coef-dmu}
Let $\ol\nu = (m, 1^{d-m})$ be a partition of $d$.
Then the coefficient of $x^{d-1}$ in $s_{\ol\nu}(x)$ in the interval $[ c(\ol\nu), \infty )$ is
$-f^{\ol\nu}\left[ \binom{d}{2} + 1 \right]$.
\end{lema}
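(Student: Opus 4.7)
The plan is to read off the coefficient of $x^{d-1}$ directly from the expression for $s_{\ol\nu}(x)$ derived in the proof of Theorem~\ref{teor:kronescalerapoli} (equation~\eqref{ecua:defi-poli-bueno}),
$$s_{\ol\nu}(x)=\sum_{|D|\le d}\Biggl[\sum_{\substack{T\in\tirabo{\wt\nu}\\ e(T)=|D|}}\signo T\,{\sf lr}(D,D;\ol\tau(T))\Biggr]f_D(x),$$
by pinning down which diagram classes $D$ produce $f_D$ of degree $\ge d-1$ on $[c(\ol\nu),\infty)$, and then summing those contributions via Proposition~\ref{prop:coefi-rldeltakdos}.

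For the reduction, write a sorted decomposition $D=C_1^{\sqcup a_1}\sqcup\cdots\sqcup C_m^{\sqcup a_m}$. When $f_D\neq 0$, each principal border strip $B(C_i)$ equals some $Z_{n_i}$, and by Paragraph~\ref{para:poli-snake}, $f_D$ is polynomial of degree $\sum a_i$ on the relevant interval. Using $|C_i|\ge|B(C_i)|=2n_i-1$ together with $|D|\le d$ and $b_0:=\sum a_i n_i\ge\sum a_i$, one obtains
$$d\ge|D|\ge\sum a_i(2n_i-1)=2b_0-\sum a_i.$$
Thus $b_0\le(d+\sum a_i)/2$, and combining with $b_0\ge\sum a_i$ shows that $\sum a_i\in\{d-1,d\}$ forces $b_0=\sum a_i$, hence every $n_i=1$. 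Because a connected skew diagram whose principal border strip is a single square must itself be a single square, each $C_i=\tablau\,$. Therefore $D=\Delta_{\sum a_i}\in\{\Delta_{d-1},\Delta_d\}$, and these are the only classes that contribute to the sought coefficient.

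Proposition~\ref{prop:coefi-rldeltakdos}(1) gives the bracketed coefficient in front of $f_{\Delta_d}$ as $f^{\ol\nu}\,d!$, while Proposition~\ref{prop:coefi-rldeltakdos}(2) with $k=d-1$ yields $-(d-1)!\binom{d-1}{m-1}=-(d-1)!\,f^{\ol\nu}$ (using the hook-length formula $f^{(m,1^{d-m})}=\binom{d-1}{m-1}$). On $[c(\ol\nu),\infty)\subseteq[d-1,\infty)$, Paragraph~\ref{para:poli-snake} identifies $f_{\Delta_d}(x)=\binom{x}{d}$ and $f_{\Delta_{d-1}}(x)=\binom{x}{d-1}$, whose $x^{d-1}$ coefficients are $-\binom{d}{2}/d!$ and $1/(d-1)!$ respectively. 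Adding,
$$[x^{d-1}]\,s_{\ol\nu}(x)=f^{\ol\nu}\,d!\cdot\Bigl(-\tfrac{1}{d!}\tbinom{d}{2}\Bigr)+\bigl(-(d-1)!\,f^{\ol\nu}\bigr)\cdot\tfrac{1}{(d-1)!}=-f^{\ol\nu}\Bigl[\tbinom{d}{2}+1\Bigr],$$
which is the desired formula. The main obstacle is the reduction to $\{\Delta_d,\Delta_{d-1}\}$; it rests on the parity fact $|Z_n|=2n-1$, which rules out two-cell connected components in any $D$ whose $f_D$ can contribute to degree $\ge d-1$.
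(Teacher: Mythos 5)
Your proof is correct and follows essentially the same route as the paper: it isolates $\Delta_d$ and $\Delta_{d-1}$ as the only summands of degree $\ge d-1$, extracts their coefficients from Proposition~\ref{prop:coefi-rldeltakdos}, and reads off the $x^{d-1}$ terms of $\binom{x}{d}$ and $\binom{x}{d-1}$. The only difference is that you make explicit, via the bound $|C_i|\ge |B(C_i)|=2n_i-1$, the reduction step that the paper dispatches as ``a similar analysis to the one given in Theorem~\ref{teor:kronescalerapoli}''; that argument is valid and is exactly what is needed there.
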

\begin{proof}
By a similar analysis to the one given in Theorem~\ref{teor:kronescalerapoli}
we see that the only contributions to the coefficient of $x^{d-1}$ come from the summands
in equation~\eqref{ecua:defi-poli-bueno} corresponding to $\Delta_d$ and $\Delta_{d-1}$.
By Propositions~\ref{prop:coefi-rldeltakdos}.1 and~\ref{prop:polisnakes} the coefficient of
$f_{\Delta_d}$ in the interval $[ d-1, \infty )$ is $f^{\ol\nu} d!$ and by
Propositions~\ref{prop:coefi-rldeltakdos}.2 and~\ref{prop:polisnakes} the coefficient of
$f_{\Delta_{d-1}}$ in the interval $[ d-2, \infty )$ is $-(d-1)!\binom{d-1}{m-1}$.
Recall $f_{\Delta_k}(x) = \binom{x}{k}$.
Therefore, the coefficient of $x^{d-1}$ in $s_{\ol\nu}(x)$ in the interval $[ c(\ol\nu), \infty )$ is
$-f^{\ol\nu} \binom{d}{2} - \binom{d-1}{m-1}$.
Since the hook length formula implies that $f^{\ol\nu} = \binom{d-1}{m-1}$,
the lemma follows.
\end{proof}

\begin{prob}
{\em
Is it true that for any partition $\ol\nu$ of $d$ the coefficient of $x^{d-1}$ in
$s_{\ol\nu}(x)$ in the interval $[ c(\ol\nu), \infty )$ is
$-f^{\ol\nu}\left[ \binom{d}{2} + 1 \right]$?
According to Table~\ref{tabla:polyfun} this is true for $d\le 5$.
}
\end{prob}

\begin{polifun} \label{para:polifun}
{\em
Let $\ol\nu \vdash d$.
We have seen that $s_{\ol\nu}$ is a polynomial function of degree $d$ in the
$[c({\ol\nu}), \infty)$.
We call this the \emph{main interval} of $s_{\ol\nu}$.
The only partitions $\ol\nu$ for which $t(\ol\nu) \ge d-1$ are all partitions of
$1$, $2$, $3$, $4$, all partitions of 5 but $(1^5)$, and the partitions $(6)$ and $(5,1)$.
For all these partitions $s_{\ol\nu}$ is a polynomial function.
For the remaining partitions the main interval is not the whole domain of $s_{\ol\nu}$.
}
\end{polifun}

So, we obtain the following approximation to the Saxl conjecture:

\begin{teor} \label{teor:mi-saxl}
Let $\ol\nu$ be a partition of $d$.
Then ${\sf g} (\rho_k, \rho_k, ( n_k - d, \ol\nu ) ) >0$, for all
$k \ge t(\ol\nu)$, with the possible exception of at most
$2d - \frac{1 + \sqrt{1 + 8(\nu_2 + d)}}{2}$ $k$'s.
\end{teor}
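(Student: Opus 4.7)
\emph{Proof proposal.} My plan is to deduce the theorem from Theorem~\ref{teor:kronescalerapoli} by a direct root-counting argument. Since Kronecker coefficients are non-negative integers, and since for every integer $k \ge t(\ol\nu)$ one has $s_{\ol\nu}(k) = {\sf g}(\rho_k, \rho_k, (n_k - d, \ol\nu))$, the Kronecker coefficient fails to be positive at such a $k$ precisely when $s_{\ol\nu}(k) = 0$. So it suffices to bound the number of integer zeros of $s_{\ol\nu}$ in $[t(\ol\nu), \infty)$.

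First I would split the range into the main interval $[c(\ol\nu), \infty)$, on which by Theorem~\ref{teor:kronescalerapoli} the function $s_{\ol\nu}$ is a polynomial of degree exactly $d$ with positive leading coefficient $f^{\ol\nu}$, and the transition interval $[t(\ol\nu), c(\ol\nu))$. The polynomial can have at most $d$ integer zeros in the main interval. Since $c(\ol\nu) = \max(d - 1, t(\ol\nu))$, the transition interval is empty when $t(\ol\nu) \ge d - 1$; otherwise it consists of the $d - 1 - t(\ol\nu)$ integers $t(\ol\nu), t(\ol\nu)+1, \ldots, d - 2$, each of which can in the worst case contribute one exception.

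Adding the two contributions gives at most $2d - 1 - t(\ol\nu)$ exceptions. The defining inequality $t(\ol\nu) \ge \frac{-1 + \sqrt{1 + 8(\nu_2 + d)}}{2}$ (from Notation~\ref{nota:cota-infe}) then yields
\[
2d - 1 - t(\ol\nu) \;\le\; 2d - \frac{1 + \sqrt{1 + 8(\nu_2 + d)}}{2},
\]
which is the claimed bound.

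The main obstacle is not really present here: all the substantive content sits in Theorem~\ref{teor:kronescalerapoli}, which produces the piecewise polynomial $s_{\ol\nu}$ and certifies both its degree and the sign of its leading coefficient on the main interval. The rest is a short bookkeeping argument, the only mild subtlety being the handling of the transition interval, which is dispatched by the case distinction built into the definition of $c(\ol\nu)$.
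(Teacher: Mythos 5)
Your proposal is correct and follows essentially the same route as the paper: both arguments bound the exceptional $k$'s by at most $d$ integer roots of the degree-$d$ polynomial on the main interval $[c(\ol\nu),\infty)$ plus at most $d-1-t(\ol\nu)$ integers in the transition interval, giving $2d-1-t(\ol\nu)$, and then use $t(\ol\nu)\ge \frac{-1+\sqrt{1+8(\nu_2+d)}}{2}$ to obtain the stated bound. No gaps.
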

\begin{proof}
Since $s_{\ol\nu}$ is a polynomial function of degree $d$ in the interval
$[ c(\ol\nu), \infty )$, the biggest number of integer zeros of $s_{\ol\nu}$
in the interval $[ t(\ol\nu), \infty )$ would be attained when $d-1 > t(\ol\nu)$,
$s_{\ol\nu}(k) = 0$ for all $k\in \natural \cap [ t(\ol\nu), d-1)$ and
$s_{\ol\nu}$ has $d$ integer zeros in $[ d-1, \infty )$.
This number would be $d - 1 - t(\ol\nu) + d$.
The claim follows.
\end{proof}

\begin{ejem} \label{ejem:polikorn}
{\em
Let $\ol\nu$ be a partition of size at most $5$.
In Table~\ref{tabla:polyfun} we list the polynomial functions $s_{\ol\nu}$ in the
interval $[ c(\ol\nu), \infty )$, its real roots in the same interval (approximated
only to two decimals) and $t(\ol\nu)$.
Observe that all real roots of $s_{\ol\nu}$ are located in the interval $[0, t(\ol\nu) )$.
We used Sage~\cite{sage} in these computations.
If $\ol\nu \neq (1^5)$, then $t(\ol\nu) = c(\ol\nu)$.
In these cases $s_{\ol\nu}$ is a polynomial function in the interval
$[ t(\ol\nu), \infty )$.
Note that $t(1^5) = 3$ and $4 = c(1^5)$.
Therefore $s_{(1^5)}$ is defined by different polynomials in the intervals
$[3,4]$ and $[4, \infty)$.
All the nonzero summands $f_D$ of $s_{(1^5)}$ are polynomial in the interval
$[3, \infty)$ with the exception of $f_{\Delta_5}$, which,
by Propositions~\ref{prop:coefi-rldeltakdos}.1 and~\ref{prop:polisnakes},
has coefficient 120.
This map is polynomial in $[4,\infty)$ and 0 in $[3,4]$.
Since $f_{\Delta_5}(3) = 0 = \binom{3}{5}$, we can substitute $f_{\Delta_5}$
by $120 \binom{x}{5}$.
In this way the polynomial $p(x) $ appearing in the last row of Table~\ref{tabla:polyfun}
satisfies $p(k) = {\sf g}(\rho_k, \rho_k, (|\rho_k| - 5, 1^5))$ for all $k \ge 3$.
}
\end{ejem}

\begin{table}[h]
\begin{center}
\begin{tabular}{ | r || c | c | c |} \hline
$\ol\nu$ & $s_{\ol\nu}$ in the interval $[ c(\ol\nu), \infty )$ & real roots of $s_{\ol\nu}$
& $t(\ol\nu)$ \\ \hline \hline
$(1)      $ & $ x-1 $ & $1$ & $2$ \\ \hline
$(2)      $ & $ x^2 -2x $     & $0$, $2$ & $3$ \\
$(1^2)    $ & $ x^2 -2x + 1 $ & $1$, $1$ & $2$ \\ \hline
$(3)      $ & $ x^3 -4x^2 + 4x -1 $ & $0.38$, $1$, $2.62$ & $3$ \\
$(2,1)    $ & $ 2x^3 -8x^2 +8x -1 $ & $0.15$, $1.4$, $2.45$ & $3$ \\
$(1^3)    $ & $ x^3 -4x^2 + 5x -2 $ & $1$, $1$, $2$ & $3$ \\ \hline
$(4)      $ & $ x^4 - 7x^3 + 17x^2 - 18x + 7 $    & $1$, $3.32$ & $4$ \\
$(3,1)    $ & $ 3x^4 - 21x^3 + 51x^2 - 51x + 18 $ & $1$, $1$, $2$, $3$ & $4$ \\
$(2^2)    $ & $ 2x^4 - 14x^3 + 34x^2 - 33x + 11 $ & $0.81$, $1$ & $3$ \\
$(2,1^2)  $ & $ 3x^4 - 21x^3 + 52x^2 - 53x + 18 $ & $0.69$, $1.63$, $2$, $2.68$ & $3$ \\
$(1^4)    $ & $ x^4 - 7x^3 + 18x^2 - 20x + 8    $ & $1$, $2$, $2$, $2$ & $3$ \\ \hline
$(5)      $ & $ x^5 - 11x^4 + 48x^3 - 106x^2 + 119x - 54 $ & $1.56$, $2$, $3.79$ & $4$\\
$(4,1)    $ & $ 4x^5 - 44x^4 + 192x^3 - 420x^2 + 462x - 203 $ & $1.41$, $2.3$, $3.52$ & $4$ \\
$(3,2)    $ & $ 5x^5 - 55x^4 + 240x^3 - 522x^2 + 567x - 245 $ & $1.42$, $2.46$, $3.27$ & $4$ \\
$(3,1^2)  $ & $ 6x^5 - 66x^4 + 289x^3 - 632x^2 + 690x - 300 $ & $1.54$, $2$, $3.25$ & $4$ \\
$(2^2,1)  $ & $ 5x^5 - 55x^4 + 241x^3 - 526x^2 + 571x - 246 $ & $1.53$, $2$, $3$ & $4$ \\
$(2,1^3)  $ & $ 4x^5 - 44x^4 + 194x^3 - 428x^2 - 470x - 204 $ & $1.41$, $2$, $3$ & $4$ \\
$(1^5)    $ & $ x^5 - 11x^4 + 49x^3 -110x^2 + 124x -56 $      & $2$, $2$, $2$ & $3$ \\ \hline
\end{tabular}
\caption{First polynomial functions}
\label{tabla:polyfun}
\end{center}
\end{table}

From the information in Table~\ref{tabla:polyfun} we get

\begin{coro}
{\em
Let $d \in \nume 5$ and $\ol\nu \vdash d$.
Then for all $k \in [t(\ol\nu), \infty )$ the character $\cara{(n_k - d, \ol\nu)}$ is a
component of $\cara{\rho_k} \otimes \cara{\rho_k}$.
}
\end{coro}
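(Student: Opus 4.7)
The plan is to invoke Theorem~\ref{teor:kronescalerapoli}, which for each partition $\ol\nu$ of size $d$ produces a rational piecewise polynomial function $s_{\ol\nu}$ satisfying ${\sf g}(\rho_k, \rho_k, (n_k-d, \ol\nu)) = s_{\ol\nu}(k)$ for every integer $k \ge t(\ol\nu)$. Thus the corollary reduces to verifying that $s_{\ol\nu}(k) > 0$ for every such $k$, and this will be done by inspection of Table~\ref{tabla:polyfun}, which lists, for all $\ol\nu$ with $|\ol\nu| \le 5$, the polynomial form of $s_{\ol\nu}$ on its main interval $[c(\ol\nu), \infty)$ together with the corresponding real roots.

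For every partition $\ol\nu$ with $|\ol\nu| \le 5$ other than $(1^5)$, Paragraph~\ref{para:polifun} gives $t(\ol\nu) = c(\ol\nu)$, so $s_{\ol\nu}$ is a single polynomial on all of $[t(\ol\nu), \infty)$. Reading the third column of Table~\ref{tabla:polyfun}, one checks case by case that each real root is strictly less than $t(\ol\nu)$. Because Theorem~\ref{teor:kronescalerapoli} guarantees that $s_{\ol\nu}$ has positive leading coefficient $f^{\ol\nu}$, the polynomial is eventually positive, and having no real zero in $[t(\ol\nu), \infty)$, it is positive throughout this interval; in particular $s_{\ol\nu}(k) > 0$ for every integer $k \ge t(\ol\nu)$.

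The remaining case is $\ol\nu = (1^5)$, for which $t(\ol\nu) = 3$ while $c(\ol\nu) = 4$, so the polynomial in the last row of Table~\ref{tabla:polyfun} represents $s_{\ol\nu}$ only on $[4, \infty)$. Its roots $2, 2, 2$ all lie below $4$ and its leading coefficient $f^{(1^5)} = 1 > 0$, so the previous argument yields $s_{\ol\nu}(k) > 0$ for every integer $k \ge 4$. Only $k = 3$ remains; as explained in Example~\ref{ejem:polikorn}, the polynomial $p(x) = x^5 - 11x^4 + 49x^3 - 110x^2 + 124x - 56$ satisfies $p(k) = {\sf g}(\rho_k, \rho_k, (n_k - 5, 1^5))$ for all $k \ge 3$, and a direct evaluation gives $p(3) = 1 > 0$. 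There is no substantive obstacle here, since Theorem~\ref{teor:kronescalerapoli} and Table~\ref{tabla:polyfun} do all the real work; the only subtlety is the case $(1^5)$, where one must either reason piecewise or use the extension $p(x)$ (whose validity at $k=3$ hinges on $f_{\Delta_5}(3) = \binom{3}{5} = 0$) and compute a single value by hand.
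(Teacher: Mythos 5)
Your proposal is correct and follows exactly the route the paper intends: the corollary is stated as an immediate consequence of Table~\ref{tabla:polyfun}, whose justification (all real roots lying in $[0,t(\ol\nu))$, positive leading coefficient $f^{\ol\nu}$, and the special treatment of $\ol\nu=(1^5)$ via the substitution $f_{\Delta_5}\mapsto 120\binom{x}{5}$ making $p(k)$ valid for $k\ge 3$) is precisely the content of Example~\ref{ejem:polikorn} that you reproduce. Your explicit check $p(3)=1>0$ just makes the remaining case concrete; nothing further is needed.
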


We propose two conjectures concerning the maps $s_{\ol\nu}$.
The first is equivalent to Saxl's conjecture, the second is about the coefficients
of $s_{\ol\nu}$.

\begin{conje}
{\em
Let $\ol\nu \vdash d$.
Then $s_{\ol\nu}$ is positive in the interval $[ t(\ol\nu), \infty )$.
}
\end{conje}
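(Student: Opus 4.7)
The first thing I would do is record the equivalence with Saxl's conjecture, because that clarifies both the target and the difficulty. Every partition $\nu$ of $n_k$ can be written uniquely as $\nu = (n_k - d, \ol\nu)$ with $\ol\nu = (\nu_2, \dots, \nu_r) \vdash d$, and the condition that $\nu$ be a partition of $n_k$ is precisely $n_k \ge d + \nu_2$, i.e.\ $k \ge t(\ol\nu)$. By Theorem~\ref{teor:kronescalerapoli} we have $s_{\ol\nu}(k) = {\sf g}(\rho_k, \rho_k, (n_k-d,\ol\nu))$ for all such $k$. Thus the conjecture asserts $s_{\ol\nu}(k) > 0$ on $[t(\ol\nu), \infty)$ for every $\ol\nu$, which is precisely the statement that each irreducible $\cara{\nu}$ occurs in $\cara{\rho_k}\otimes\cara{\rho_k}$ for every $\nu \vdash n_k$.

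Granting this equivalence, I would attack the positivity in two regimes. \emph{Asymptotic regime.} By Theorem~\ref{teor:kronescalerapoli}, on the main interval $[c(\ol\nu), \infty)$ the function $s_{\ol\nu}$ is an honest polynomial of degree $d$ with leading coefficient $f^{\ol\nu} > 0$, and Lemma~\ref{lema:coef-dmu} pins down the next coefficient in hook cases. Using the explicit sum in \eqref{ecua:defi-poli-bueno} together with the coefficient bounds ${\sf lr}(D,D;\ol\tau(T)) \le (f^D)^2$ coming from the corollary after Lemma~\ref{lema:retratos}, and the multinomial descriptions of $f_D$ from Lemma~\ref{lema:remueve-snakes}, I would extract an explicit $K(\ol\nu)$ with $s_{\ol\nu}(k) > 0$ for all $k \ge K(\ol\nu)$. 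This reduces the statement to finitely many values of $k$. \emph{Small regime.} For those finitely many $k \in [t(\ol\nu), K(\ol\nu))$, I would combine Theorem~\ref{teor:mi-saxl} (which already restricts the failure set to at most $2d - \frac{1+\sqrt{1+8(\nu_2+d)}}{2}$ values) with the self-conjugacy $\rho_k = \rho_k^{\,\prime}$: this forces many structural identities among the coefficients ${\sf g}(\rho_k,\rho_k,\nu)$ (tensoring with the sign representation permutes components, and Proposition~\ref{prop:kron-escuadra} plus Lemma~\ref{lema:escuadra-parti} give exact counts), which I would use to pair up candidate vanishing indices with indices already known to be positive.

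The hard part, and the reason the plan stops short of a complete proof, is ruling out integer zeros of the polynomial $s_{\ol\nu}$ inside its main interval. A degree-$d$ polynomial with positive leading coefficient can in principle possess up to $d$ positive integer roots, and nothing in the present algebraic setup prevents one of them from lying in $[c(\ol\nu), \infty)$; the experimental data of Table~\ref{tabla:polyfun} is evidence that this never happens for $|\ol\nu|\le 5$, but is not a proof. A genuine resolution therefore seems to require either a positive combinatorial model for ${\sf g}(\rho_k,\rho_k,(n_k-d,\ol\nu))$ (currently open, even for general Kronecker coefficients), or an inductive/character-theoretic mechanism that propagates a single known positivity (such as ${\sf g}(\rho_k,\rho_k,\rho_k) \ge 1$ from the sign representation argument) to every component via semigroup/saturation properties of Kronecker coefficients.

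Failing a uniform argument, a realistic intermediate goal I would set is to prove positivity for all $\ol\nu$ with $\prof{\ol\nu} \le 2$ or for all hook $\ol\nu = (m, 1^{d-m})$: in these cases Propositions~\ref{prop:coefi-rldeltakdos} and~\ref{prop:kron-escuadra} give enough closed-form information about the dominant coefficients of $s_{\ol\nu}$ that the asymptotic bound $K(\ol\nu)$ can be made fully explicit and then matched against Theorem~\ref{teor:mi-saxl} to exhaust the finite exceptional set.
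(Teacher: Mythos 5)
This statement is one of the two conjectures with which the paper closes Section~7; the author gives no proof of it, supporting it only by its equivalence with Saxl's conjecture and by the data of Table~\ref{tabla:polyfun} for $|\ol\nu|\le 5$. So there is no argument in the paper to measure yours against, and your proposal --- as you yourself say --- is a research plan rather than a proof. You do get the framing right: by Theorem~\ref{teor:kronescalerapoli}, positivity of $s_{\ol\nu}$ at the integers $k\ge t(\ol\nu)$ is exactly the statement that $\cara{(n_k-d,\ol\nu)}$ occurs in $\cara{\rho_k}\otimes\cara{\rho_k}$, and the leading coefficient $f^{\ol\nu}>0$ does settle the question for all sufficiently large $k$. (One small caveat: the conjecture as written asks for positivity of the piecewise polynomial function on the whole real interval $[t(\ol\nu),\infty)$, which is formally a little stronger than positivity at the integers in it, though the paper itself glosses over this in declaring the equivalence with Saxl.)

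The genuine gap is the one you name --- nothing in the paper's machinery excludes an integer zero of $s_{\ol\nu}$ inside its main interval, and Theorem~\ref{teor:mi-saxl} only bounds the number of possible exceptions without locating or eliminating any of them --- but two of the tools you propose for the ``small regime'' would not function as described. First, Lemma~\ref{lema:escuadra-parti} is stated for rectangles $\la=(a^b)$ and gives nothing for the staircase $\rho_k$; only Proposition~\ref{prop:kron-escuadra} applies, and only for $\ol\nu=(1^d)$. Second, the pairing you hope to extract from self-conjugacy of $\rho_k$ runs out of the fixed-depth framework: the symmetry ${\sf g}(\rho_k,\rho_k,\nu)={\sf g}(\rho_k,\rho_k,\nu^\prime)$ sends $\nu=(n_k-d,\ol\nu)$ to a conjugate partition of depth $n_k-\ell(\nu)$, which for large $k$ is far greater than $d$, so it relates $s_{\ol\nu}(k)$ to a Kronecker coefficient that no $s_{\ol\mu}$ with $|\ol\mu|\le d$ controls. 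Your closing suggestion of propagating the single positivity ${\sf g}(\rho_k,\rho_k,\rho_k)\ge 1$ via a semigroup mechanism is indeed the direction the literature has taken (note that Kronecker coefficients satisfy a semigroup property but are \emph{not} saturated), but that lies entirely outside the diagrammatic apparatus of this paper. In short: the proposal is an honest and well-informed plan, it correctly isolates where the difficulty sits, and it does not prove the conjecture.
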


\begin{conje}
{\em
Let $\ol\nu \vdash d$.
Then the polynomial map $s_{\ol\nu}$ has nonzero integer coefficients in the interval
$[c(\ol\nu), \infty )$ and its signs alternate.
The sign of the coefficient of $x^k$ in $s_{\ol\nu}(x)$ is $(-1)^{d-k}$ for all
$0 \le k \le d$.
}
\end{conje}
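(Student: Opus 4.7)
The plan is to start from the explicit expression
\begin{equation*}
s_{\ol\nu}(x) = \sum_{|D| \le d}\ \sum_{\substack{T \in \tirabo{\wt\nu}\\ e(T)=|D|}}
\signo T\, {\sf lr}(D, D; \ol\tau(T))\, f_D(x)
\end{equation*}
from the proof of Theorem~\ref{teor:kronescalerapoli}, expanded in the monomial basis on the main interval $[c(\ol\nu), \infty)$. On this interval only those $D$ whose sorted decomposition $C_1^{\sqcup a_1}\sqcup\cdots\sqcup C_m^{\sqcup a_m}$ has every $B(C_i)$ equal to some border strip class $Z_{n_i}$ contribute a nonzero $f_D$, and for such $D$ one has $f_D(x) = \prod_{i=1}^m \binom{x-b_i^D}{a_i^D}$ with $b_i^D$ as in Paragraph~\ref{para:poli-snake}. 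Expanding each factor via
\begin{equation*}
\binom{x-b}{a} = \frac{1}{a!}\sum_{j=0}^a (-1)^{a-j}\, e_{a-j}(b, b+1, \ldots, b+a-1)\, x^j
\end{equation*}
produces the monomial expansion of $s_{\ol\nu}$, and the three parts of the conjecture can then be read off term by term.

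For the alternating sign claim I would induct downward on $k$ from $d$ to $0$. The cases $k=d$ and $k=d-1$ are already accessible: Theorem~\ref{teor:kronescalerapoli} gives the leading coefficient $f^{\ol\nu}>0$ from $D=\Delta_d$ alone, and for hook shapes Lemma~\ref{lema:coef-dmu} pins down the sign $-1$ of the $x^{d-1}$ coefficient, with an argument that essentially generalizes (the only $D$'s producing $x^{d-1}$ after expansion are $\Delta_d$ and $\Delta_{d-1}$, plus snakes with $m=1$, all handled by Proposition~\ref{prop:coefi-rldeltakdos}). For general $k$ I would group the snake-like $D$'s by the multiset $\{n_i\}$ of border-strip sizes and exploit Proposition~\ref{prop:coefi-rld}, which rewrites the signed coefficient $\sum_T \signo T\, {\sf lr}(D,D;\ol\tau(T))$ as the manifestly nonnegative quantity $\sum_{\alpha,\beta\vdash|D|} c^D_\alpha c^D_\beta\, {\sf g}(\alpha,\beta,\ol\nu)$; this nonnegativity, combined with the explicit $(-1)^{a-j}$ signs in the binomial expansion above and the fact that $e_{a-j}(b,\ldots,b+a-1) \ge 0$, should force the global sign $(-1)^{d-k}$ on each monomial coefficient.

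For integrality, $s_{\ol\nu}$ takes integer values at every integer $\ge t(\ol\nu)$, yielding integer coefficients in the basis $\{\binom{x}{k}\}$; promoting this to integer coefficients in the monomial basis amounts to divisibility of those binomial-basis coefficients by $k!$. The symmetrization factors $a_1!\cdots a_m!$ coming from Lemma~\ref{lema:collage} and Definition~\ref{defi:inter-indep}, together with the product structure of the $f_D$, should supply the required divisibility after one sums over all $D$ with a fixed sorted shape. The main obstacle is the nonvanishing of the middle coefficients: at the extremes $k=d$ and $k=d-1$ a single class $\Delta_j$ dominates, but for $1 \le k \le d-2$ many snake shapes compete and the conjectured nonvanishing requires ruling out an exact cancellation — something Table~\ref{tabla:polyfun} confirms empirically but for which no general combinatorial identity is apparent. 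A reasonable first target would be the three families $\ol\nu=(d)$, $\ol\nu=(1^d)$, and general hooks $\ol\nu=(m,1^{d-m})$, where $\tirabo{\wt\nu}$ is highly constrained and the summation should reduce to a closed Stirling-type identity building on Proposition~\ref{prop:coefi-rldeltakdos}.
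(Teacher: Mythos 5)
First, a point of orientation: this statement is one of the paper's \emph{open conjectures} (stated after Table~\ref{tabla:polyfun}); the paper offers no proof, so there is nothing to compare your argument against. What you have written is a programme rather than a proof, and besides the gap you acknowledge (possible vanishing of the middle coefficients) it contains a more serious unproved step at its core. You invoke Proposition~\ref{prop:coefi-rld} to claim that the signed coefficient $\sum_{T}\signo T\,{\sf lr}(D,D;\ol\tau(T))$ of $f_D$ in \eqref{ecua:defi-poli-bueno} is ``manifestly nonnegative.'' That proposition applies only to classes $D$ of size exactly $d$: its proof uses that $e(T)=d$ forces the long special border strip into the first row of $\wt\nu$. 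For $|D|=k<d$ this coefficient is generally \emph{negative}: it equals $-1$ for $D=\tablau$ when $\ol\nu=(2)$, and Proposition~\ref{prop:coefi-rldeltakdos}(2) gives $(-1)^{d-k}k!\binom{k}{m-1}$ for $D=\Delta_k$ and hooks. So what your scheme actually requires is that this coefficient has sign exactly $(-1)^{d-|D|}$ for \emph{every} $D$ with $f_D\neq 0$, a statement of essentially the same depth as the conjecture itself and nowhere established. You would also need, and do not state, the parity fact that $\deg f_D\equiv |D|\pmod 2$ (true because each contributing component is some $Z_{n_i}$ of odd size $2n_i-1$); without it the signs $(-1)^{a-j}$ from your binomial expansion do not assemble into a global $(-1)^{d-k}$ even granting the sign of the coefficients.

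The integrality step is also not closed. Integer values at all integers $k\ge t(\ol\nu)$ give integrality of the coefficients in a shifted binomial basis $\binom{x-t(\ol\nu)}{j}$, not in the monomial basis; passing to monomials requires the divisibility you merely assert ``should'' hold, and the paper itself warns that related polynomials (Example~8 in Paragraph~\ref{para:evaliri}) fail to have integer coefficients, so this is not automatic. In short: the degree-$d$ and degree-$(d-1)$ coefficients are genuinely within reach by Theorem~\ref{teor:kronescalerapoli} and Lemma~\ref{lema:coef-dmu}, and your reduction of the remaining sign claim to a nonnegativity statement about the coefficients $\sum_{T:\,e(T)=k}\signo T\,{\sf lr}(D,D;\ol\tau(T))$ for $k<d$ is a reasonable way to reformulate the problem, but it is a reformulation, not a proof.
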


\section{Extension of the diagrammatic method to arbitrary Kronecker coefficients} \label{sec:ext}

In this section we show how our diagrammatic method extends to arbitrary Kronecker coefficients.
All results follow straightforwardly from what has been done in previous sections.

\begin{defi}
{\em
Let $\la$, $\mu$ be partitions of the same number and let $D$, $E$ be diagram classes of
the same size.
Let $\rho$ be a $\la$-removable diagram, and denote by $\la\setminus \rho$ the set-theoretic
complement of $\rho$ in $\la$.
Thus $\la\setminus\rho$ is a partition.
We define
\begin{equation*}
\Diagrados DE = \{ (\rho, \sigma) \in \Diagra D \times \Diagralibre \mu E \mid
\la \setminus \rho = \mu \setminus \sigma \}
\end{equation*}
and
\begin{equation*}
\diagrados DE = \# \Diagrados DE.
\end{equation*}
}
\end{defi}

Proposition~\ref{prop:multiliridiag} can be extended in the following way.
The proof of the new result is essentially the same.

\begin{prop} \label{prop:multilirigen}
Let $\la$, $\mu$ be partitions of $n$, $\pi$ a composition of $n$ and $d = |\ol\pi|$.
Then
\begin{equation*}
\multiliri \pi = \sum_{D,\, E \in \eseefe Dd} {\sf lr}(D,E;\ol\pi)\, \diagrados DE.
\end{equation*}
\end{prop}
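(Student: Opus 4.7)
The plan is to follow essentially the same argument as in Proposition~\ref{prop:multiliridiag}, with two small upgrades: replace Corollary~\ref{coro:indep}(1) by its bilateral analogue, and replace the single-partition accounting by the joint accounting recorded by $\diagrados DE$. First I would apply Lemma~\ref{lema:itera} to write
\begin{equation*}
\multiliri \pi \;=\; \sum_{\alpha \vdash n-d}\, {\sf lr}(\la/\alpha,\mu/\alpha;\ol\pi).
\end{equation*}
Every summand on the right is indexed by a partition $\alpha\subseteq \la\cap\mu$ with $|\alpha|=n-d$, and the skew diagrams $\la/\alpha$ and $\mu/\alpha$ are $\la$-removable and $\mu$-removable respectively, both of size $d$.

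Next I would establish that ${\sf lr}(\rho_1,\rho_2;\ol\pi)$ depends only on the diagram classes $[\rho_1]$ and $[\rho_2]$. This is the bilateral version of Corollary~\ref{coro:indep}(1) and is proved in exactly the same way: by Lemma~\ref{lema:lr} the number ${\sf lr}(\rho_1,\rho_2;\ol\pi)$ equals $\langle \cara{\rho_1}\otimes \cara{\rho_2},\permu{\ol\pi}\rangle$, and by Lemma~\ref{lema:mismoscaras} each of the skew characters depends only on the isomorphism class of its diagram. Once this is in hand we may legitimately write ${\sf lr}(D,E;\ol\pi)$ for diagram classes $D,E$.

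Finally, I would regroup the sum according to the pair of classes $D=[\la/\alpha]$ and $E=[\mu/\alpha]$. The key combinatorial observation is that the assignment $\alpha \mapsto (\la/\alpha,\mu/\alpha)$ is a bijection between the set of partitions $\alpha$ satisfying $|\alpha|=n-d$, $[\la/\alpha]=D$, $[\mu/\alpha]=E$, and the set $\Diagrados DE$, the inverse being $(\rho,\sigma)\mapsto \la\setminus\rho=\mu\setminus\sigma$. Consequently, for fixed $D,E\in \eseefe Dd$, the number of $\alpha$'s producing a given pair of classes is exactly $\diagrados DE$, and all such $\alpha$ contribute the same value ${\sf lr}(D,E;\ol\pi)$ by the previous paragraph. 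Collecting the terms yields the asserted identity. There is no serious obstacle here; the only thing to check carefully is the bilateral invariance of ${\sf lr}(-,-;\ol\pi)$ under isomorphism, and this reduces instantly to Lemma~\ref{lema:mismoscaras}.
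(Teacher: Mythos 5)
Your proposal is correct and follows exactly the route the paper intends: the paper itself gives no separate argument for this proposition, stating only that the proof is ``essentially the same'' as that of Proposition~\ref{prop:multiliridiag}, which is precisely the combination you use of Lemma~\ref{lema:itera}, the isomorphism-invariance of ${\sf lr}(\cdot,\cdot;\ol\pi)$ via Lemmas~\ref{lema:lr} and~\ref{lema:mismoscaras}, and the regrouping of the $\alpha$'s by the pair of classes $([\la/\alpha],[\mu/\alpha])$, whose count is by definition $\diagrados DE$. Your explicit verification that $\alpha\mapsto(\la/\alpha,\mu/\alpha)$ bijects onto $\Diagrados DE$ is a welcome added detail, not a deviation.
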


Combining Propositions~\ref{prop:robtau} and~\ref{prop:multilirigen} we obtain
a generalization of Theorem~\ref{teor:kron-combin}, that is,
an enhancement of the RT method for arbitrary Kronecker coefficients.

\begin{teor} \label{teor:kron-combin-dos}
Let $\ol\nu = (\nu_2, \dots, \nu_r)$ be a partition of $d$ and
$n\ge d + \nu_2$.
Then, for any partitions $\la$, $\mu$ of $n$, we have
\begin{equation} \label{ecua:kron-merabuenados}
{\sf g}(\la,\mu, (n-d,\ol\nu)) = \sum_{k=0}^d \sum_{D,\, E \in \eseefe Dk}
\sum_{ \substack{T\in \tirabo {\wt\nu} \\ e(T) = k}} \signo T\, {\sf lr}(D,E; \ol\tau(T))\, \diagrados DE.
\end{equation}
\end{teor}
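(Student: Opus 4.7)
The plan is to mirror almost exactly the derivation of Theorem~\ref{teor:kron-combin}, replacing Proposition~\ref{prop:multiliridiag} by its generalization Proposition~\ref{prop:multilirigen}. I would begin with Proposition~\ref{prop:robtau}, which already holds for arbitrary $\la$, $\mu$ of $n$:
\begin{equation*}
{\sf g}(\la,\mu, (n-d,\ol\nu)) = \sum_{T\in \tirabo{\wt\nu}} \signo T \, \multiliri{\tau(B_n(T))}.
\end{equation*}
This identity is what gives the outer sign-weighted sum over special border strip tableaux of shape $\wt\nu$, and it does not depend on symmetry between $\la$ and $\mu$.

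Next, I would substitute Proposition~\ref{prop:multilirigen} into each term on the right. For a fixed $T$, let $k = e(T) = |\ol\tau(T)|$; by Lemma~\ref{lema:biyec-tiraborde} one has $\ol\tau(B_n(T)) = \ol\tau(T)$, so the ``bar'' part of the content is independent of the stretching performed by $B_n$. Hence
\begin{equation*}
\multiliri{\tau(B_n(T))} = \sum_{D,\, E \in \eseefe Dk} {\sf lr}(D,E;\ol\tau(T))\, \diagrados DE ,
\end{equation*}
where the indexing uses the observation that only the truncated content $\ol\tau(T)$ matters for the Proposition~\ref{prop:multilirigen} expansion (the first part $n-d$ is absorbed into the shape of $\la$ and $\mu$ via the definition of ${\sf lr}(\la/\alpha,\mu/\beta;\cdot)$ with $\alpha \vdash n-d$).

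Finally I would interchange the sums. Since $0 \le e(T) \le d$ for every $T \in \tirabo{\wt\nu}$, grouping the tableaux $T$ by the value $k = e(T)$ produces
\begin{equation*}
{\sf g}(\la,\mu, (n-d,\ol\nu)) = \sum_{k=0}^{d} \sum_{D,\, E \in \eseefe Dk} \sum_{\substack{T\in \tirabo{\wt\nu} \\ e(T)=k}} \signo T \, {\sf lr}(D,E;\ol\tau(T))\, \diagrados DE ,
\end{equation*}
which is the claimed formula. The only step requiring any care is the proof of Proposition~\ref{prop:multilirigen} itself; this is the main obstacle, but it is a routine generalization of Proposition~\ref{prop:multiliridiag}: start from Lemma~\ref{lema:itera}, which reduces $\multiliri{\pi}$ to a sum over $\alpha \vdash \pi_1$ of numbers ${\sf lr}(\la/\alpha, \mu/\alpha;\ol\pi)$, then group the partitions $\alpha$ (equivalently the pairs $(\la/\alpha, \mu/\alpha)$) by the joint isomorphism class $(D,E)$ of the two skew shapes. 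By Corollary~\ref{coro:indep}(1) the value ${\sf lr}(\la/\alpha, \mu/\alpha;\ol\pi)$ depends only on $(D,E)$, so the count of such $\alpha$ collapses to $\diagrados DE$, completing the argument.
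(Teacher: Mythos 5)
Your proposal is correct and follows essentially the same route as the paper, which likewise obtains the theorem by combining Proposition~\ref{prop:robtau} with Proposition~\ref{prop:multilirigen}, the latter proved exactly as you indicate: rerun the argument for Proposition~\ref{prop:multiliridiag} starting from Lemma~\ref{lema:itera} and group the partitions $\alpha$ by the pair of classes $(\clase{\la/\alpha},\clase{\mu/\alpha})$, whose count is $\diagrados DE$ by definition. The only nit is that the independence of ${\sf lr}(\la/\alpha,\mu/\alpha;\ol\pi)$ from the choice of representatives should be cited as Corollary~\ref{coro:indep}(2) (or Lemma~\ref{lema:mismoscaras} together with Lemma~\ref{lema:lr}) rather than Corollary~\ref{coro:indep}(1), which only covers the diagonal case of a single diagram paired with itself.
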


Note that when $\la = \mu$, equation~\eqref{ecua:kron-merabuenados} reduces to
equation~\eqref{ecua:kron-merabuena}.

\begin{probs}
{\em
Is there an analogous result to Theorem~\ref{teor:elbueno} for $\diagrados DE$?
Are there analogous results to Theorems~\ref{teor:polikrontirabo} and~\ref{teor:tiras-borde-kron}
for ${\sf g}(\la,\mu, (n-d,\ol\nu))$?
}
\end{probs}

\section{A stability property for Kronecker coefficients} \label{sec:estab}

In this section we prove a new property for Kronecker coefficients.
It generalizes the stability property noted by Murnaghan~\cite{mur} and proved since
in different ways~\cite{bri,lit2,thi,vejc}.
In particular, our graphical approach yields a new proof of Murnaghan's original
stability property.
It might be possible to find a proof of this new property using other techniques,
but it was the diagrammatic method exposed here that permitted its discovery.

\begin{nota}
{\em
For any partition $\mu = \vector \mu q$, given $k\in \natural$ and $i\in \nume q$, denote
\begin{equation*}
\mu^{(i,k)} = (\mu_1 + k, \dots, \mu_i +k, \mu_{i+1}, \dots, \mu_q).
\end{equation*}
}
\end{nota}

\begin{teor} \label{teor:estab-nuevo}
Let $\la$, $\mu$, $\nu$ be partitions of $n$ and
$i \le \min\{\ell(\la), \ell(\mu)\}$.
If $\la_i - \la_{i+1} \ge \prof\nu$ and $\mu_i - \mu_{i+1} \ge \prof\nu$, then
for all $k\in \natural$  we have
\begin{equation} \label{ecua:estab-nuevo}
{\sf g}(\la^{(i,k)}, \mu^{(i,k)}, \nu^{(1,ki)}) = \coefi.
\end{equation}
\end{teor}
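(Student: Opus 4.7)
\emph{The plan} is to apply the closed formula of Theorem~\ref{teor:kron-combin-dos} to both sides of~\eqref{ecua:estab-nuevo} and check that the expansions coincide term by term. Writing $d=\prof\nu$ and $\nu=(n-d,\ol\nu)$, we have $\nu^{(1,ki)}=(n-d+ki,\ol\nu)$, so both triples share the same reduced partition $\ol\nu$, hence the same $\wt\nu$ and the same indexing set $\tirabo{\wt\nu}$. Theorem~\ref{teor:kron-combin-dos} thus expresses the right-hand side of~\eqref{ecua:estab-nuevo} as
\[
\sum_{k'=0}^d \sum_{D,E\in\eseefe D{k'}} \sum_{\substack{T\in\tirabo{\wt\nu}\\ e(T)=k'}} \signo T\,{\sf lr}(D,E;\ol\tau(T))\,\diagrados DE,
\]
and the left-hand side by the same sum but with the final factor replaced by ${\sf r}_{\la^{(i,k)},\mu^{(i,k)}}(D,E)$. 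The signs and Littlewood--Richardson numbers depend only on $\ol\nu$ and on $D,E$, so the identity reduces to proving
\[
\diagrados DE = {\sf r}_{\la^{(i,k)},\mu^{(i,k)}}(D,E)
\]
for every pair of diagram classes $D$, $E$ with common size $k'\le d$.

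\emph{To establish this diagrammatic equality,} the plan is to exhibit an explicit bijection between the two sets of $\la,\mu$- and $\la^{(i,k)},\mu^{(i,k)}$-removable pairs. Writing such a pair as $(\la/\alpha,\mu/\alpha)$ with $\alpha\subseteq\la\cap\mu$, I would map $\alpha$ to $\alpha^{(i,k)}$, obtained by adding $k$ to each of the first $i$ entries. The key observation is that the gap hypotheses $\la_i-\la_{i+1}\ge d$ and $\mu_i-\mu_{i+1}\ge d$, combined with $|\la/\alpha|\le d$ and $|\mu/\alpha|\le d$, force
\[
\alpha_i\ge\la_i-d\ge\la_{i+1}\quad\text{and}\quad\alpha_i\ge\mu_i-d\ge\mu_{i+1}.
\]
This guarantees that $\alpha^{(i,k)}$ is a partition contained in $\la^{(i,k)}\cap\mu^{(i,k)}$ and that no box of row $i$ in $\la/\alpha$ shares a column with a box of row $i+1$; hence $\la/\alpha$ decomposes as a disjoint union of a ``top part'' (rows $1,\dots,i$) and a ``bottom part'' (rows $i+1,\dots$), and the same holds for $\mu/\alpha$.

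\emph{The last step} is to verify that the map preserves isomorphism classes of both skew diagrams simultaneously. Passing from $\la/\alpha$ to $\la^{(i,k)}/\alpha^{(i,k)}$ simply translates the top part $k$ columns to the right while leaving the bottom part untouched; since the two parts were already disconnected, the resulting skew diagram is isomorphic to $\la/\alpha$ as a diagram class, and symmetrically for $\mu$. The inverse map subtracts $k$ from the first $i$ entries of any $\alpha'\subseteq\la^{(i,k)}\cap\mu^{(i,k)}$ with $|\la^{(i,k)}/\alpha'|\le d$; this is well defined because the gap for $\la^{(i,k)}$ is $\la_i+k-\la_{i+1}\ge d+k$, forcing $\alpha'_i\ge\la_{i+1}+k$, so $\alpha'_i-k\ge\la_{i+1}\ge\alpha'_{i+1}$. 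Thus one obtains a bijection preserving diagram classes, which yields the claimed equality. \emph{The main obstacle}, though mild, is precisely the verification that the rigid shift by $k$ introduces no new adjacencies between the top and bottom parts, and here the gap hypothesis is used decisively to ensure the two parts are already separated in $\la/\alpha$ and $\mu/\alpha$.
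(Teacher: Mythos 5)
Your proposal is correct and follows essentially the same route as the paper: both reduce the identity via Theorem~\ref{teor:kron-combin-dos} to the equality $\diagrados DE = {\sf r}_{\la^{(i,k)},\,\mu^{(i,k)}}(D,E)$, established through the correspondence $\alpha \asocia \alpha^{(i,k)}$. You merely spell out in more detail the verification (which the paper leaves implicit) that the gap hypothesis forces $\alpha_i \ge \la_{i+1}$ and hence that the shift by $k$ preserves the diagram class.
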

\begin{proof}
Let $d = \prof\nu$, $\nu = (n-d, \ol\nu)$ and let $D$, $E$ be diagram classes of size
at most $d$.
Since $\la_i - \la_{i+1} \ge d$, the correspondence $\la/\alpha \asocia \la^{(i,k)}/\alpha^{(i,k)}$
defines a bijection between $\Diagra D$ and $\Diagralibre {\la^{(i,k)}} D$.
Thus $\diagralibre {\la^{(i,k)}} D= \diagra D$.
Similarly, $\diagralibre {\mu^{(i,k)}} E = \diagralibre \mu E$.
Therefore $\diagrados DE = {\sf r}_{\la^{(i,k)},\,\mu^{(i,k)}}(D,E)$.
And since $\nu^{(1,ki)}$ and $\nu$ differ only in their first part,
Theorem~\ref{teor:kron-combin-dos} implies that both coefficients in
equation~\eqref{ecua:estab-nuevo} are equal.
\end{proof}

\begin{obses}
{\em
(1) The case $i=1$ yields Murnaghan's stability property.

(2) Due to the symmetry ${\sf g}(\la^\prime, \mu^\prime, \nu) =
\coefi$, one also has a similar stability theorem for columns.

(3) Formulas~~\eqref{ecua:kron-merabuena} and~\eqref{ecua:kron-merabuenados} should be
helpful to improve the known bounds of stability for a given Kronecker coefficient,
see~\cite{bor,bri,vejc}.
}
\end{obses}

\begin{comen}
{\em
Following the suggestion of a reviewer, I sketched, in this new version,
how to extend the diagrammatic method from Kronecker squares to
arbitrary Kronecker products (see Section~\ref{sec:ext}) and noted that the diagrammatic
method permitted also to extend the new stability property from Kronecker squares to
arbitrary Kronecker coefficients.
So, Theorem~9.2 in~\cite{valfpsac}, became Theorem~\ref{teor:estab-nuevo} here.
After the revised version of this paper was submitted for publication
I learned that Igor Pak and Greta Panova had a similar result to our
Theorem~\ref{teor:estab-nuevo} (see~\cite[Theorem~4.1]{pp3}).
Their hypothesis for stability is, in the notation of Theorem~\ref{teor:estab-nuevo},
\begin{equation*}
\min\{\la_i,\mu_i\} - \max\{\la_{i+1},\mu_{i+1}\} \ge \prof \nu,
\end{equation*}
which is weaker than ours.
On the other hand they prove something more (Theorem~1.1), namely, that, as a function of $k$,
the coefficient of ${\sf g}(\la^{(i,k)}, \mu^{(i,k)}, \nu^{(1,ki)})$ is monotone increasing.
}
\end{comen}

\vskip 1.5pc

\noindent \textbf{Acknowledgements.}
I am grateful to Christine Bessenrodt, Igor Pak and Miguel Raggi for helpful
suggestions.
I also would like to thank the referees for their very careful reading and their
helpful suggestions.
This paper was supported by UNAM-DGAPA IN102611 and IN108314.

{\small

}

\end{document}